\documentclass{amsart}
\usepackage{amsmath,amssymb,latexsym,amsthm,amscd,amsxtra}
\usepackage[all]{xy}
\usepackage{verbatim}
\usepackage{dsfont}
\usepackage{MnSymbol}
\usepackage{tikz}
\usepackage{tikz-cd}
\usepackage{tkz-euclide}
\usepackage{hyperref}
\hypersetup{
    colorlinks=true,
    linkcolor=red,
    filecolor=magenta,      
    urlcolor=cyan,
    citecolor=blue,
}
\urlstyle{same}
\setlength{\textwidth}{17cm}
\setlength{\textheight}{23cm}
\setlength{\topmargin}{-0.5cm}
\setlength{\oddsidemargin}{0cm}
\setlength{\evensidemargin}{-.5in}
\textfloatsep=0.5cm
\usepackage{graphicx}
\usepackage{mathrsfs}
\usepackage{enumitem}
\setlist{nolistsep}
\graphicspath{converted_graphics}
\theoremstyle{plain} 
\newtheorem{theorem}{Theorem}[section]
\newtheorem{lemma}[theorem]{Lemma}
\newtheorem{proposition}[theorem]{Proposition}
\newtheorem{corollary}[theorem]{Corollary}

\newtheorem{conjecture}[theorem]{Conjecture}

\theoremstyle{plain}

\theoremstyle{definition}
\newtheorem{remark}[theorem]{Remark}
\newtheorem{example}[theorem]{Example}
\newtheorem{definition}[theorem]{Definition}
\parindent=15pt
\newcommand{\dsp}{\displaystyle}
\newcommand{\bi}[1]{\textbf{\textit{#1}}}
\DeclareMathOperator{\ssi}{SI}
 
\DeclareMathOperator{\Rep}{Rep(Q,\beta)}
\newcommand{\Hom}{\text{Hom}}

\newcommand{\spl}{\text{SL}}
\newcommand{\gl}{\text{GL}}
\DeclareMathOperator{\dm}{dim}
\newcommand{\C}{\mathbb{C}}
\newcommand{\Z}{\mathbb{Z}}

\newcommand{\F}{\mathscr{F}}
\newcommand{\RR}{\mathbb{R}}
\newcommand{\HH}{\mathbb{H}}

\newcommand{\dt}{\text{det}}
\DeclareMathOperator{\Mat}{Mat}

\newcommand{\unlam}{\underline{\lambda}}
\newcommand{\cone}{C(Q,\beta)}

\DeclareMathOperator{\Id}{Id}

\newcommand{\dmm}{\underline{\textbf{dim}} }

\newcommand{\ie}{\emph{i.e.}~}

\newcommand{\R}{\mathbb{R}}
\newcommand{\CC}{\mathbb{C}}

\newcommand{\xra}{\xrightarrow}

\newcommand{\extp}{\@ifnextchar^\@extp{\@extp^{\,}}}
\def\@extp^#1{\mathop{\bigwedge\nolimits^{\!#1}}}

\DeclareMathOperator{\semi}{SI}

\DeclareMathOperator{\lam}{\lambda}

\newcommand{\si}[1]{\semi(Q,{#1})}

\newcount\cols
{\catcode`,=\active\catcode`|=\active
\gdef\Young(#1){\hbox{$\vcenter
{\mathcode`,="8000\mathcode`|="8000
\def,{\global\advance\cols by 1 &}%
\def|{\cr
      \multispan{\the\cols}\hrulefill\cr
       &\global\cols=2 }%
  \offinterlineskip\everycr{}\tabskip=0pt
  \dimen0=\ht\strutbox \advance\dimen0 by \dp\strutbox
    \halign
    {\vrule height \ht\strutbox depth \dp\strutbox##
      &&\hbox to \dimen0{\hss$##$\hss}\vrule\cr
     \noalign{\hrule}&\global\cols=2 #1\crcr
     \multispan{\the\cols}\hrulefill\cr%
   }
}$}} }

\begin{document}

\title[]{Generalized Littlewood-Richardson coefficients for branching rules of $\gl(n)$ and extremal weight crystals}

\author{Brett Collins}
\thanks{The author was partially supported by the NSA under grant H98230-15-1-0022.}

\begin{abstract}
Following the methods used by Derksen-Weyman in \cite{DW11} and  Chindris in \cite{Chi08}, we use quiver theory to represent the generalized Littlewood-Richardson coefficients for the branching rule for the diagonal embedding of $\gl(n)$ as the dimension of a weight space of semi-invariants. Using this, we prove their saturation and investigate when they are nonzero. We also show that for certain partitions the associated stretched polynomials satisfy the same conjectures as single Littlewood-Richardson coefficients. We then provide a polytopal description of this multiplicity and show that its positivity may be computed in strongly polynomial time. Finally, we remark that similar results hold for certain other generalized Littlewood-Richardson coefficients.
\end{abstract}

\maketitle

\section{Introduction}

\subsection{Context and motivation}

Littlewood-Richardson coefficients appear in many contexts in representation theory, such as the coefficients in the decomposition of a product of symmetric polynomials or the tensor product of irreducible representations of $\gl(n)$. One way to define Littlewood-Richardson coefficients is as follows: let $V$ be a complex vector space of dimension $n$ and $\lambda = (\lambda_1,\ldots, \lambda_n)$ a weakly decreasing sequence of $n$ integers. Denote the irreducible rational representation of $\gl(n)$ with highest weight $\lambda$ by $S^\lambda(V)$. Given three weakly decreasing sequences of $n$ integers $\lambda(1), \lambda(2), \lambda(3)$, the Littlewood-Richardson coefficient $c^{\lambda(2)}_{\lambda(1),\lambda(3)}$ is defined to be the multiplicity of $S^{\lambda(2)}(V)$ in $S^{\lambda(1)}(V)\otimes S^{\lambda(3)}(V)$, that is, 
\[c^{\lambda(2)}_{\lambda(1),\lambda(3)} = \dm_\C \Hom_{\gl(V)}(S^{\lambda(2)}(V), S^{\lambda(1)}(V) \otimes S^{\lambda(3)}(V)).
\]

Similarly, sums of products of these coefficients, which we call generalized Littlewood-Richardson coefficients throughout this paper, appear naturally in the decompositions of various algebraic objects. In particular, generalized Littlewood-Richardson coefficients describe the multiplicities in the branching rules of  restricted representations of $\gl(n)$, as described in \cite{HJ09} and \cite{HTW05}. While there is no known way to describe the multiplicities in each of these branching rules using quiver theory, we show that we can do exactly such for one of them, that is, we describe the coefficients as the dimension of a weight space of semi-invariants for a certain quiver, dimension vector, and weight. More generally, we do this for the generalized Littlewood-Richardson coefficient
\begin{equation}\label{one}
f(\lambda(1),\ldots, \lambda(m)) := 
\sum c^{\lambda(1)}_{\alpha(1),\alpha(2)} c^{\lambda(2)}_{\alpha(2),\alpha(3)}\cdots  c^{\lambda(m-1)}_{\alpha(m-1),\alpha(m)}  c^{\lambda(m)}_{\alpha(m),\alpha(1)}
\end{equation}
for $m \geq 4$ and even, where the summation ranges over all partitions $\alpha(i)$. This multiplicity describes the coefficients arising from the branching rule for the diagonal embedding $\gl(n) \subseteq \gl(n) \times \gl(n)$ in the case $m=6$. We remark in the \hyperlink{section-others}{last section} that similar techniques likewise represent two other generalized Littlewood-Richardson coefficients in this manner.

Recently, Littlewood-Richardson coefficients have been of vital interest in geometric complexity theory which seeks to determine the complexity of computational problems by using tools from algebraic geometry and representation theory to provide lower bounds, and the complexity is quite commonly compared to that of computing certain multiplicities like Littlewood-Richardson coefficients. Understanding the complexity of certain cases of these generalized coefficients or even whether they're nonzero can then be used in comparison to other computational problems. A common technique in combinatorics is to associate a polytope to a multiplicity in such a way that the number of lattice points of the polytope is precisely this number. Because the polytope is defined by a system of linear inequalities, combinatorial optimization may then be used to determine the complexity of the multiplicities as well as the properties of the polytope. 

Knutson and Tao \cite{KT99} provided a polytopal description of Littlewood-Richardson coefficients, allowing them to give a combinatorial proof of the saturation of the coefficients (Theorem \ref{LR-saturation}) and complete the proof of Horn's conjecture (Theorem \ref{Horn}). Derksen and Weyman \cite{DW00a} then reproved the saturation property in the context of quiver representations by using the saturation of weight spaces of semi-invariants. The motivation for this paper may then be summarized as providing an explicit quiver theoretic interpretation of these generalized Littlewood-Richardson coefficients in order to prove their saturation  and use results of quiver theory to study their combinatorial and geometric properties.

\subsection{Main results}
One of the main and most useful results is that of the saturation of this multiplicity.

\begin{theorem}\label{saturation-sun}
Let $\lambda(1),\ldots, \lambda(2k)$ be weakly decreasing sequences of $n$ integers for $k \geq 2$. For every integer $r \geq 1$,
\[
f(r\lambda(1),\ldots, r\lambda(2k)) \neq 0 \Longleftrightarrow
f(\lambda(1),\ldots, \lambda(2k)) \neq 0.
\]
\end{theorem}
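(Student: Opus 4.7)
The plan is to realize $f(\lambda(1),\ldots,\lambda(2k))$ as the dimension of a weight space of semi-invariants for a suitably constructed triple $(Q,\beta,\sigma_{\unlam})$, and then to deduce saturation from the general saturation theorem for semi-invariants of quivers, following the strategy pioneered by Derksen--Weyman in their quiver-theoretic reinterpretation of the saturation of the classical Littlewood--Richardson coefficients.

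The first and principal step is to exhibit an explicit quiver $Q$, a dimension vector $\beta = \beta(\unlam)$, and a character $\sigma_{\unlam}$ of $\gl(Q,\beta) = \prod_v \gl(\beta_v)$ that depends linearly on $\unlam$, such that
\[
f(\lambda(1),\ldots,\lambda(2k)) = \dm_\C \semi(Q,\beta)_{\sigma_{\unlam}}.
\]
The natural candidate is a ``sun''-shaped quiver: a central $2k$-cycle whose vertices correspond to the summation indices $\alpha(1),\ldots,\alpha(2k)$, with additional vertices and arrows attached as ``rays'' at each cycle vertex to carry the data of $\lambda(i)$. I would build this quiver by starting from the known triple-flag realization of a single $c^\lambda_{\mu,\nu}$ as a weight space of semi-invariants, and gluing $2k$ copies of it along the shared $\alpha(i)$ vertices. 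The summation over the partitions $\alpha(i)$ in \eqref{one} should then collapse, via a Cauchy-type identity applied at each glued vertex, into a single weight space on the resulting glued quiver.

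Once this realization is in place, the linearity $\sigma_{r\unlam} = r\sigma_{\unlam}$ reduces the theorem to the implication
\[
\semi(Q,\beta)_{r\sigma_{\unlam}} \neq 0 \Longrightarrow \semi(Q,\beta)_{\sigma_{\unlam}} \neq 0,
\]
which is precisely the saturation of the semigroup of effective weights $\{\sigma : \semi(Q,\beta)_\sigma \neq 0\}$ inside the character lattice of $\gl(Q,\beta)$, a theorem of Derksen--Weyman \cite{DW00a}. The reverse implication is automatic since $\sigma_{\unlam}^{\otimes r}$ lives inside $\semi(Q,\beta)_{r\sigma_{\unlam}}$ whenever a nonzero semi-invariant of weight $\sigma_{\unlam}$ exists.

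The main obstacle is the construction in the first step: exhibiting the sun quiver and its dimension vector, and verifying, via repeated applications of Cauchy's identity together with the triple-flag realization of single Littlewood--Richardson coefficients, that the resulting weight space of semi-invariants matches $f(\lambda(1),\ldots,\lambda(2k))$ on the nose. A subsidiary point to check is that the sun quiver has enough structure for the Derksen--Weyman saturation theorem to apply to $(Q,\beta)$; this may require isolating an acyclic subquiver that carries all the relevant semi-invariants, or invoking a version of the saturation theorem that extends beyond the acyclic case. Once these points are settled, Theorem \ref{saturation-sun} follows as a formal corollary.
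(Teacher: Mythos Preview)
Your proposal is correct and follows essentially the same approach as the paper: construct the sun quiver, identify $f(\lambda(1),\ldots,\lambda(2k))$ with $\dim\semi(Q,\beta)_{\sigma_{\unlam}}$ via Cauchy's formula along the flags (Lemmas~\ref{sun-2} and~\ref{si-saturation}), observe that $\sigma_{r\unlam}=r\sigma_{\unlam}$, and invoke Derksen--Weyman saturation (Theorem~\ref{saturation}). The one point on which you hedge---acyclicity---is resolved in the paper's construction by orienting the central $2k$-gon with \emph{alternating} arrow directions (which is exactly why the argument requires $m=2k$ even and $\geq 4$), so no extension of the saturation theorem beyond the acyclic case is needed.
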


We extend the results of \cite{KT99} by using their hive models to provide a polytopal description of the generalized Littlewood-Richardson coefficients in Section \ref{polytope}. By using results in combinatorial optimization theory and the above saturation property, we prove the following theorem.

 \begin{theorem}\label{sun-complexity}
Determining whether the multiplicity \emph{(\ref{one})} is positive or not can be decided in polynomial time. Even more, it can be decided in strongly polynomial time in the sense of \cite{Tar86}.
 \end{theorem}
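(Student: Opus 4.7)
The plan is to reduce positivity of $f(\lambda(1),\ldots,\lambda(2k))$ to a linear programming feasibility problem and then invoke results from combinatorial optimization. The polytopal description from Section \ref{polytope}, obtained by gluing Knutson-Tao hive polytopes around a cycle of length $2k$, produces a polytope $P(\lambda(1),\ldots,\lambda(2k))$ of the form $\{x \in \RR^N : Ax \leq b\}$ whose lattice points are in bijection with the terms counted by $f$. Two features are essential: the matrix $A$ has entries in $\{0,\pm 1\}$ (coming from hive inequalities together with the identifications along shared edges of adjacent hives) and is \emph{independent} of the weights, while the right-hand side $b = b(\lambda(1),\ldots,\lambda(2k))$ depends linearly on the entries of the $\lambda(i)$.

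First I would argue that positivity of $f$ is equivalent to nonemptiness of $P$ as a real polytope. One direction is trivial, since any lattice point is in particular a real point. For the converse, if $x \in P(\lambda(1),\ldots,\lambda(2k))$ is any real point, then choosing $r$ to clear the denominators of $x$ produces an integral point $rx$ in the dilated polytope $rP = P(r\lambda(1),\ldots,r\lambda(2k))$, so $f(r\lambda(1),\ldots,r\lambda(2k)) \neq 0$. The saturation theorem (Theorem \ref{saturation-sun}) then yields $f(\lambda(1),\ldots,\lambda(2k)) \neq 0$. This is precisely where saturation earns its keep: without it, we would have to count lattice points rather than merely test emptiness.

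The feasibility of a rational system $Ax \leq b$ is polynomial-time decidable by the ellipsoid method, with running time bounded by a polynomial in the total bit-length of $(A,b)$; this already yields the first assertion. For the sharper claim I would invoke Tardos's theorem from \cite{Tar86}, which states that linear feasibility for $Ax \leq b$ with integer $A$ can be decided in time polynomial in the dimension and in the bit-length of $A$ alone, independent of the bit-length of $b$. Because $A$ in our construction has entries in $\{0,\pm 1\}$ and depends only on the combinatorial data $k$ and $n$, its bit-length is polynomial in $n$ and $k$. Hence the positivity test runs in strongly polynomial time in the input weights $\lambda(1),\ldots,\lambda(2k)$.

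The main obstacle is not the algorithmic step but the construction of the polytope in Section \ref{polytope}: one must verify that the cycle of hive polytopes really does present $f$ as a number of lattice points, that the defining matrix $A$ has bounded entries and does not depend on $\lambda(1),\ldots,\lambda(2k)$, and that scaling the weights corresponds to scaling the polytope (so that saturation applies cleanly). Once these structural properties are in place, the strongly polynomial bound follows by directly appealing to Tardos's algorithm.
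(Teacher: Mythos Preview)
Your approach is essentially the paper's: reduce to feasibility of the hive polytope, use saturation to bridge real nonemptiness and integral nonemptiness, then invoke Tardos. There is one genuine slip, however. You write that if $x \in P(\lambda(1),\ldots,\lambda(2k))$ is \emph{any real point}, you can ``choose $r$ to clear the denominators of $x$'' and obtain an integral point in $rP$. An arbitrary real point need not have denominators to clear; nothing in the hypothesis forces $x$ to be rational. The paper closes this gap by observing that the polytope is compact, hence has a vertex, and since the system $Ax \le b$ has integer $A$ and integer $b$, Cramer's rule forces every vertex to be rational. Scaling a rational vertex then gives the integral point in the dilation you want, and saturation applies as you say. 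Once you replace ``any real point'' with ``a vertex, which is rational by Cramer's rule,'' your argument matches the paper's exactly.
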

 
 Horn's conjecture (Theorem \ref{Horn}) relates the set of possible eigenvalues arising from a sum of Hermitian matrices to the nonvanishing of Littlewood-Richardson coefficients. To describe a corresponding statement of the conjecture to multiplicity (\ref{one}), we need to define some notation. For an $m$-tuple  $(I_1,\ldots, I_m)$ of subsets of $\{1,\ldots, n\}$, define the following weakly decreasing sequences of integers (the notation is explained in the \hyperref[notation]{subsection} at the end of this section): 
\[
\unlam(I_i) = \begin{cases}
\lambda'(I_i) & i \text{ even} \\
\lambda'(I_i) -((|I_i| - |I_{i-1}| - |I_{i+1}|)^{n-|I_i|}) & i \text{ odd},
\end{cases}
\]
where we identify $I_0$ and $I_m$. Define the set $K(n,m) \subseteq \R^{mn}$, $m \geq 4$ and even,  to be all $m$-tuples $(\lambda(1),\ldots, \lambda(m))$ of weakly decreasing sequences of $n$ reals that satisfy $\sum_{i \text{ even}} |\lambda(i)| = \sum_{i \text{ odd}} |\lambda(i)|$ and 
		\begin{equation*}
		\sum_{j \in I_i} \sum_{i \text{ even}} \lambda(i)_j \leq \sum_{j \in I_i} \sum_{i \text{ odd}} \lambda(i)_j
		\end{equation*}
		for every tuple $(I_1, \ldots, I_m)$ such that $\unlam(I_i)$, $1 \leq i \leq m$, are partitions and 
		\[
		f(\unlam(I_1), \ldots, \unlam (I_m)) \neq 0,
		\]
This makes $K(n,m)$ a rational convex polyhedral cone in $\R^{mn}$. A corresponding statement of Horn's conjecture for this multiplicity is then as follows, where we describe the generalized eigenvalue problem for $f$ in Section \ref{Hermitian}.

\begin{theorem}
\label{generalization-Horn}
The following statements are true.
	\begin{enumerate}
		\item The cone $K(n,m) \subseteq \RR^{nm}$, $m \geq 4$ and even, consists of all $m$-tuples $(\lambda(1),\ldots, \lambda(m))$ of weakly decreasing sequences of $n$ reals satisfying $\sum_{i \text{ even}} |\lambda(i)|  = \sum_{i \text{ odd}} |\lambda(i)|$ and 
		\[
			\sum_{j \in I_i} \sum_{i \text{ even}} \lambda(i)_j \leq \sum_{j \in I_i} \sum_{i \text{ odd}} \lambda(i)_j
			\]
					for every tuple $(I_1, \ldots, I_m)$ such that the $\unlam(I_i)$, $1 \leq i \leq m$, are partitions and 
		\[
		f(\unlam(I_1), \ldots, \unlam (I_m)) = 1.
		\]
		\item If $(\lambda(1),\ldots, \lambda(m)) \in K(n,m)$, then the tuple satisfies the generalized eigenvalue problem for $f$.
		\item If $\lambda(1),\ldots, \lambda(m)$ are weakly decreasing sequences of $n$-integers, then 
		\[
		(\lambda(1),\ldots, \lambda(m)) \in K(n,m) \Longleftrightarrow f(\lambda(1),\ldots,\lambda(m)) \neq 0.
		\]
		\item $\dm K(n,m) =  mn-1$.
		\end{enumerate}
\end{theorem}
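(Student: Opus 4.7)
My plan is to translate the theorem into quiver-theoretic terms using the identification (set up in the earlier sections of the paper) of $f(\lambda(1),\ldots,\lambda(m))$ with $\dm \si{\beta}_{\wt}$ for an appropriate quiver $Q$ and dimension vector $\beta$. Under this dictionary, $K(n,m)$ should coincide with the rational weight cone $C(Q,\beta) = \{\sigma : \si{\beta}_\sigma \neq 0\}$, intersected with the affine hyperplane $\sum_{i \text{ even}}|\lambda(i)| = \sum_{i \text{ odd}}|\lambda(i)|$. Once this translation is in place, the four statements become quiver-theoretic facts established by Derksen--Weyman, Schofield, and Chindris, applied in concert with the saturation theorem \ref{saturation-sun}.

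For part (3), I would argue each direction separately. The forward direction is an extraction of Horn-type inequalities from sub-dimension vectors of the generic representation: a tuple $(I_1,\ldots,I_m)$ with each $\unlam(I_i)$ a partition and $f(\unlam(I_i)) \neq 0$ produces a generic subrepresentation, and semi-stability of $\wt$ then forces the stated inequality. For the converse, if $\lambda$ is an integer tuple lying in $K(n,m)$ then it satisfies all the defining inequalities of $C(Q,\beta)$, hence lies in the saturated weight cone; some positive multiple $r\lambda$ therefore has $f(r\lambda) \neq 0$, and Theorem \ref{saturation-sun} descends this to $\lambda$ itself.

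Part (2) I would handle by recalling that the saturated weight cone for $(Q,\beta)$ coincides, via the moment map / Kirwan correspondence (the Klyachko-type statement transplanted to the quiver setting), with the cone of eigenvalue tuples realizable by the corresponding Hermitian matrix problem, which is precisely the generalized eigenvalue problem for $f$ to be spelled out in Section \ref{Hermitian}. Part (4) is a dimension count: $K(n,m)$ lies in a codimension-one affine subspace of $\RR^{mn}$, so $\dm K(n,m) \leq mn - 1$; for equality I would exhibit an interior point by taking a strict partition tuple $(\lambda(1),\ldots,\lambda(m))$ at which all Horn-type inequalities are strict, for instance by rescaling and perturbing a known tuple with $f(\lambda) \neq 0$.

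The main obstacle is part (1), which refines the defining inequalities from $f(\unlam(I_i)) \neq 0$ to $f(\unlam(I_i)) = 1$. In the quiver setting this is a Ressayre-type facet theorem: the facets of $C(Q,\beta)$ correspond to sub-dimension vectors $\beta'$ that are extremal in the sense that the associated semi-invariant weight space is one-dimensional, equivalently to Schur sub-dimension vectors with $\ext(\beta',\beta-\beta') = 0$. I would invoke this theorem in the form given by Chindris (building on Derksen--Schofield--Weyman), and then verify under the Littlewood--Richardson dictionary that the one-dimensionality condition $\dm \si{\beta'}_{\sigma'} = 1$ is exactly $f(\unlam(I_i)) = 1$. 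Getting this translation precise---in particular confirming that every facet of $K(n,m)$ arises from a tuple of subsets of the prescribed form---is the technical heart of the argument.
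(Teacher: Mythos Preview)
Your proposal is correct and follows essentially the same route as the paper: identify $K(n,m)$ with $C(Q,\beta)$ via $(\lambda(1),\ldots,\lambda(m)) \mapsto \sigma_1$, then invoke the Derksen--Weyman/Schofield facet description (Theorem~\ref{facets}, packaged as Proposition~\ref{conditions-weight-cone}) for parts (1) and (3), and the moment-map equations (Proposition~\ref{matrix-equations}) for part (2). Two small points of divergence are worth noting. For part (4), the paper does not produce an interior point; instead it verifies directly that $\beta$ is a Schur root (Lemma~\ref{Schur-root}, via Kac's fundamental-region criterion) and then cites the general fact $\dim C(Q,\beta)=|Q_0|-1$ for Schur $\beta$ (Corollary~\ref{dim-cones}), which is cleaner than a perturbation argument. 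For part (2), be careful with the word ``coincides'': the paper explicitly remarks (just after Proposition~\ref{eigenvalue-sun}) that for the sun quiver the eigenvalue conditions are necessary but \emph{not} sufficient for membership in $K(n,m)$, unlike in Horn's original setting. This does not affect your proof of part (2), which only needs the forward implication, but the Kirwan-type correspondence is genuinely one-sided here.
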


In particular, this provides a recursive procedure for finding all nonzero generalized Littlewood-Richardson coefficients of this type. 
We use this description to describe all facets of the cone of effective weights in the case $n=2,\,m=6$ and find the minimal set of inequalities on the $\lambda(i)$ (see Example \ref{n=2} and the \hyperref[appendix]{Appendix}).

One consequence of this description of the sequences in the cone $K(n,m)$ is the following factorization formula.

\begin{theorem}\label{factorization}
	Let $(\lambda(1), \ldots, \lambda(m)) \in K(n,m) \cap \Z^{mn-1}$. For any  tuple of subsets $I=(I_1, \ldots, I_m)$ of $S=\{1,\ldots, n\}$ satisfying the conditions defining $K(n,m)$, we have the factorization
		\[
			f(\lambda(1),\ldots, \lambda(m)) = f(\lambda(1)^*,\ldots, \lambda(m)^*) \cdot  f(\lambda(1)^\#,\ldots, \lambda(m)^\#),
	\]
	where
	\[
	\lambda(p)^* = (\lambda(p)_{i_{j_1}}, \ldots, \lambda(p)_{i_{j_r}}), \quad I_j = \{i_{j_1}, \ldots, i_{j_r}\}, \qquad \lambda(p)^\# = (\lambda(p)_{\tilde{i}_{j_1}}, \ldots, \lambda(p)_{\tilde{i}_{j_{n-r}}}), \quad S \backslash I_j = (\tilde{i}_{j_1}, \ldots, \tilde{i}_{j_{n-r}}).
	\]
	\end{theorem}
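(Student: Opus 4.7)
The plan is to derive this factorization from the general factorization theorem of Derksen-Weyman for weight spaces of semi-invariants along facets of the cone of effective weights, exploiting the quiver-theoretic realization $f(\lambda(1),\ldots,\lambda(m)) = \dim \si{\beta}_{\wt}$ established earlier in this paper. Part (1) of Theorem \ref{generalization-Horn} identifies the facet-defining inequalities of $K(n,m)$ with the subset data $(I_1,\ldots,I_m)$ for which $f(\unlam(I_1),\ldots,\unlam(I_m))=1$, and the latter condition corresponds to a Schurian canonical decomposition of the dimension vector $\beta$ in the underlying quiver.

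First I will make explicit the sub-dimension vector $\beta'=\beta'(I)$ associated to $(I_1,\ldots,I_m)$: at each vertex of $Q$ carrying a copy of $\C^n$, the subset $I_j$ selects $|I_j|$ coordinates, so $\beta'$ records the dimensions $|I_j|$ vertex-by-vertex, and the complementary sub-dimension vector is $\beta''(I)=\beta-\beta'(I)$, with entries $n - |I_j|$. The condition $f(\unlam(I_1),\ldots,\unlam(I_m))=1$ defining the facet of $K(n,m)$ on which $(\lambda(1),\ldots,\lambda(m))$ lies is precisely the condition that $\beta = \beta'(I) + \beta''(I)$ is a Schurian decomposition with one-dimensional semi-invariant weight space, which is exactly the hypothesis required to invoke the Derksen-Weyman factorization formula.

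Second I will apply that formula to obtain
\[
\dim \si{\beta}_{\wt} \;=\; \dim \si{\beta'(I)}_{\wt} \cdot \dim \si{\beta''(I)}_{\wt},
\]
and identify the two factors on the right with $f(\lambda(1)^*,\ldots,\lambda(m)^*)$ and $f(\lambda(1)^\#,\ldots,\lambda(m)^\#)$, respectively. This identification uses that restricting the $\gl(n)$-weight $\lambda(p)$ to the coordinates indexed by $I_j$ (respectively by its complement) produces the $\gl(|I_j|)$-weight $\lambda(p)^*$ (respectively the $\gl(n-|I_j|)$-weight $\lambda(p)^\#$), and the quiver construction from the earlier sections then reads these data as the respective smaller coefficients.

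The main obstacle will be this last bookkeeping step. The apparently ad hoc shift by $(|I_i|-|I_{i-1}|-|I_{i+1}|)$ for odd $i$ built into the definition of $\unlam(I_i)$ is exactly what is needed to make the weight $\wt$ on the sub-dimension vector $\beta'(I)$ agree with $\sigma_{\underline{\lambda^*}}$ under the smaller quiver realization; verifying this requires carefully tracking the contribution of every arrow of $Q$ to the weight and reconciling it with the combinatorial data of the $I_j$, accounting for both source and target contributions at the odd vertices. Once this compatibility is established, the factorization is immediate.
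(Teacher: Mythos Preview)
Your approach is exactly the paper's: realize $f$ as $\dim\si{\beta}_{\sigma_1}$, associate to $I$ the sub-dimension vector $\beta_I$ (weakly increasing along each flag with jumps at $I_j$; note most vertices carry $\C^j$ for $j<n$, not $\C^n$), and then invoke the Derksen--Weyman factorization (Theorem~\ref{si-factorization}) together with the bookkeeping identification $\dim\si{\beta_I}_{\sigma_1}=f(\lambda^*)$, $\dim\si{\beta-\beta_I}_{\sigma_1}=f(\lambda^\#)$.

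There is one genuine gap. You assert that $\beta_1\circ\beta_2=1$ is ``exactly the hypothesis required to invoke the Derksen--Weyman factorization formula,'' but Theorem~\ref{si-factorization} takes as input the $\sigma_1$-\emph{stable} decomposition of $\beta$, not merely a decomposition with $\beta_1\circ\beta_2=1$. These agree only when $\sigma_1$ lies in the relative interior of the wall $\HH(\beta_1)\cap C(Q,\beta)$; if $\sigma_1$ lies on a lower-dimensional face, the $\sigma_1$-stable decomposition refines $\beta_1\dotplus\beta_2$ into $c_1\cdot\gamma_1\dotplus\cdots\dotplus c_s\cdot\gamma_s\dotplus d_1\cdot\delta_1\dotplus\cdots\dotplus d_t\cdot\delta_t$, and the naive product $\dim\si{\beta_1}_{\sigma_1}\cdot\dim\si{\beta_2}_{\sigma_1}$ does not follow directly. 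The paper handles this second case by observing that the sets $\{\gamma_i\}$ and $\{\delta_j\}$ are disjoint with $\gamma_i\circ\delta_j=1$, so that Theorem~\ref{si-factorization} applied separately to $\beta$, $\beta_1$, and $\beta_2$ yields the same product of symmetric powers on both sides, and the desired equality $(\alpha\circ\beta)=(\alpha\circ\beta_1)(\alpha\circ\beta_2)$ follows. You should add this step.
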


In addition, we investigate the stretched Littlewood-Richardson polynomials $f(N\lambda(1),\ldots, N\lambda(m))$ for certain $\lambda(1),\ldots, \lambda(m)$ in Section \ref{stretched-polynomials}. The tuples we investigate turn out to have the same behavior as the stretched Littlewood-Richardson polynomials for a single coefficient, namely, they satisfy conjectures of King, Tollu, and Toumazet \cite{KTT04} and of Fulton,  providing evidence that the conjectures for the stretched polynomials for a single Littlewood-Richardson coefficient extend to those of generalized Littlewood-Richardson coefficients. Our examples are based on corresponding examples in \cite{Fei15} for the quiver associated to a single Littlewood-Richardson coefficient. As opposed to Fei's examples, ours do not always lie on an extremal ray of the cone of effective weights.

The organization of this paper is as follows. In Section \ref{preliminaries} we provide background on quiver invariant theory and state a certain saturation property for effective weights of quivers proven by Derksen and Weyman \cite{DW00a}. The quiver associated to multiplicity (\ref{one}) is defined in Section \ref{section-saturation} and its saturation property is proven. After recalling more detailed descriptions of the facets of the cone of effective weights for acyclic quivers in Section \ref{section-facets}, we describe the facets of our quiver in Section \ref{section-facets-quivers}, which allows a description of the Horn-type inequalities of the multiplicity in Section \ref{section-Horn}. A moment map description of the  cone associated to our quiver is provided by the generalized eigenvalue problem in Section \ref{Hermitian} while we use the Horn-type inequalities to prove a factorization formula in Section \ref{section-factorization}. In Section \ref{stretched-polynomials}, we explicitly calculate the stretched Littlewood-Richardson polynomials  in certain cases and verify that they share  some of the same properties as the stretched polynomials for single Littlewood-Richardson coefficients. We provide a polytopal description of the multiplicity in Section \ref{polytope} and prove the complexity of computing its positivity. Finally, we discuss in Section \ref{section-others} that our methodology can be used to prove similar results for other generalized Littlewood-Richardson coefficients, in particular, for a multiplicity arising from another branching rule of $\gl(n)$ and a multiplicity related to extremal weight crystals. We state without proof the corresponding main results for these multiplicities.

\subsection{Relation to existing literature}
Horn made his famous \hyperref[Horn]{conjecture} in 1962 \cite{Hor62}, yet the motivation for it goes back to Weyl in 1912 \cite{Wey12}. Weyl was interested in necessary and sufficient inequalities on the eigenvalues of Hermitian matrices such that one matrix was the sum of the other two due to questions in solid mechanics. Many advances were made over the next 50 years (see \cite{Ful97b} for a survey of the history and results pertaining to this problem), and the connection was made between the eigenvalue problem and Schubert calculus, resulting in Horn's conjecture.

The second part of the conjecture provides a recursive process for determining all triples $(I_1,I_2,I_3)$ of subsets of $\{1,\ldots, n\}$ which are necessary to determine if $(\lambda(1),\lambda(2), \lambda(3))$ is such a solution. The first major step in proving the conjecture was made when Klyachko \cite{Kly98} found necessary and sufficient  homogeneous linear inequalities for the eigenvalues. It remained, however, to find a minimal set of inequalities. Klyachko had claimed that these inequalities were independent, but Woodward \cite{AW98} showed that many inequalities were redundant, and later Belkale \cite{Bel01} showed that all the inequalities for which $c^{\lambda(I_2)}_{\lambda(I_1),\lambda(I_3)} > 1$ are redundant, which includes the set found by Woodward. The remaining inequalities would be irredundant by a theorem of Klyachko provided the saturation of Littlewood-Richardson coefficients.

\begin{theorem}[Saturation conjecture]\label{LR-saturation}
For weakly decreasing sequences of $n$ integers $\lambda, \mu, \nu$, $c^{N\nu}_{N \lambda, N\mu} \neq 0$ for some positive $N$ if and only if $c^{\nu}_{\lambda, \mu} \neq 0.$
\end{theorem}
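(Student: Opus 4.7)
The plan is to follow Derksen--Weyman's quiver-theoretic approach from \cite{DW00a}, which is the framework underlying the rest of the paper. The idea is to realize $c^\nu_{\lambda,\mu}$ as the dimension of a weight space of semi-invariants on a suitable representation space of a quiver, and then to exploit the general fact that the set of weights for which such a space is nonzero is a saturated subsemigroup of the weight lattice of the relevant group.

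First I would identify $c^\nu_{\lambda,\mu}$ with a dimension of a weight space of semi-invariants. Take the triple flag (or ``star'') quiver $T$, consisting of three arms of length $n-1$ meeting at a common sink, and let $\beta$ be the staircase dimension vector whose values along each arm are $1, 2, \dots, n-1$, with $n$ at the central vertex. To the triple $(\lambda,\mu,\nu)$ one attaches a weight $\sigma = \sigma_{\lambda,\mu,\nu}$ on $\beta$ built linearly from $\lambda$, $\mu$ and an appropriate dual of $\nu$; a classical result of Schofield--Van den Bergh (made explicit in this setting by Derksen--Weyman) then gives
\[
c^\nu_{\lambda,\mu} \;=\; \dm_\C \si{\beta}_{\sigma_{\lambda,\mu,\nu}}.
\]
In particular, $c^\nu_{\lambda,\mu} \neq 0$ is equivalent to $\sigma_{\lambda,\mu,\nu}$ being an effective weight for $(T,\beta)$.

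Second, I would invoke the saturation property of the cone of effective weights for an acyclic quiver: if $\si{\beta}_{N\sigma} \neq 0$ for some positive integer $N$, then $\si{\beta}_\sigma \neq 0$. Because $\sigma_{N\lambda, N\mu, N\nu} = N \sigma_{\lambda,\mu,\nu}$ by construction, this immediately gives
\[
c^{N\nu}_{N\lambda,N\mu} \neq 0 \;\Longleftrightarrow\; N\sigma_{\lambda,\mu,\nu} \text{ is effective} \;\Longleftrightarrow\; \sigma_{\lambda,\mu,\nu} \text{ is effective} \;\Longleftrightarrow\; c^\nu_{\lambda,\mu} \neq 0,
\]
which is exactly the statement of the theorem.

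The main obstacle is the saturation of the effective cone of semi-invariants itself. Its proof in \cite{DW00a} rests on Schofield's determinantal description of semi-invariants, combined with the homological reformulation that $\sigma = \langle \alpha, - \rangle_Q$ is effective precisely when $\Hom(W, V) = 0$ for generic representations $W, V$ of dimension vectors $\alpha, \beta$ respectively; replacing $N\alpha$ by $\alpha$ preserves this generic vanishing condition after a careful application of Schofield's theorem on canonical decompositions of dimension vectors. Once this general saturation lemma is available, no combinatorial analysis of Littlewood--Richardson tableaux or of the hive polytope of \cite{KT99} is required.
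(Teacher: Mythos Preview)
Your proposal is correct and follows precisely the Derksen--Weyman argument from \cite{DW00a}. Note, however, that the paper does not supply its own proof of this statement: Theorem~\ref{LR-saturation} is recorded as background in the ``Relation to existing literature'' subsection, with proofs attributed to Knutson--Tao \cite{KT99} (via honeycombs and hives), Belkale \cite{Bel06} (via Schubert calculus), and Derksen--Weyman \cite{DW00a} (via quiver semi-invariants). Your sketch reproduces the last of these, which is exactly the methodology the paper then adapts to the sun quiver to prove its own Theorem~\ref{saturation-sun}; in that sense your approach is not merely correct but is the one the paper is built around.
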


The statement that $c^{\nu}_{\lambda, \mu} \neq 0$ implies $c^{N\nu}_{N\lambda,N\mu} \neq 0$ for all positive integers $N$ follows immediately from the Littlewood-Richardson rule or by these multiplicities forming a semigroup (see \cite{Zel99}).  Knutson, Tao, and Woodward \cite{KTW04}
%wrong paper! fix this
 used combinatorial gadgets called honeycombs and hive models to prove the saturation conjecture, completing the proof of Horn's conjecture; a similar proof of the conjecture using only hive models is found in \cite{Buc00}. A geometric proof of the saturation conjecture using Schubert calculus was also given by Belkale \cite{Bel06}. Derksen, Schofield, and Weyman \cite{DSW07} showed that the number of subrepresentations of a specific dimension of a given dimension vector for an acylic quiver may be determined using Schubert calculus, which is why Derksen and Weyman defined a specific quiver and dimension vector for which this number was a given Littlewood-Richardson coefficient. In this way, they were able to extend results about Littlewood-Richardson coefficients to the quiver setting and prove the saturation conjecture for Littlewood-Richardson coefficients in \cite{DW00a}  by using results from quiver theory.

The multiplicity (\ref{one}) for the branching rule of the diagonal embedding of $\gl(n)$ was first proven in \cite{Kin71}. The proof may also be found in \cite{HTW05} (see \cite{Koi89} and \cite{HJ09} for further discussion).

\subsection{Notation}\label{notation}
A partition $\lambda$ of length $n$ is a weakly decreasing sequence of $n$ positive integers, denoted $\lambda = (\lambda_1, \ldots, \lambda_n)$. 
We identify two partitions $\lambda$ and $\mu$ if one can be written as the other by adjoining (finitely) many zeros at the end of the sequence. As such, we say $\lambda$ is a partition of at most $n$ (nonzero) parts if $\lambda = (\lambda_1, \ldots, \lambda_n) \in \Z^n$ with $\lambda_1 \geq \ldots \geq \lambda_n \geq 0$. If $\lambda = (\lambda_1, \ldots, \lambda_n)$ and $\mu$ are weakly decreasing sequences (not necessarily of integers), we define $r \lambda = (r\lambda_1, \ldots, r\lambda_n)$ for $r \in \RR^+$ and $\lambda + \mu$ is defined by adding componentwise after extending the sequences by zeros as necessary. Every partition $\lambda$ may be identified with a Young diagram, and we denote the conjugate partition as $\lambda'$, which is the partition associated to the reflection of the Young diagram of $\lambda$ across the main diagonal, i.e., switching the rows and columns. If $I=\{z_1 < \ldots < z_r\}$ is an $r$-tuple of integers, $\lambda(I)$ is defined by $\lambda(I) = (z_r-r, \ldots, z_1-1)$. For an integer $c$ and a non-negative integer $r$, we denote the $r$-tuple $(c,\ldots, c)$ more simply as $(c^r)$. For a sequence of real numbers $\lambda = (\lambda_1, \ldots, \lambda_n)$, we define $|\lambda| = \sum_{i=1}^n \lambda_i$. 

For a complex vector space $V$ of dimension $n$ and a weakly decreasing sequence of $n$ integers $\lambda = (\lambda_1, \ldots, \lambda_n)$, we denote the irreducible rational representation of $\gl(V)$ with highest weight $\lambda$ as $S^\lambda(V)$. Given any three weakly decreasing sequences of $n$ integers $\lambda(1), \lambda(2), \lambda(3)$, the Littlewood-Richardson coefficient $c_{\lambda(1), \, \lambda(3)}^{\lambda(2)}$ is defined to be 
$$c_{\lambda(1), \, \lambda(3)}^{\lambda(2)} = \dm_\C \Hom_{\gl(V)}(S^{\lambda(2)}(V), S^{\lambda(1)}(V) \otimes S^{\lambda(3)}(V)),$$
that is, the multiplicity of $S^{\lambda(2)}(V)$ in $S^{\lambda(1)}(V) \otimes S^{\lambda(3)}(V)$.

\vspace{.2in}

\section{Preliminaries}\label{preliminaries}
\subsection{Preliminaries}
A quiver $Q = (Q_0,Q_1,t,h)$ consists of a finite set of vertices $Q_0$, a finite set of arrows $Q_1$, and functions $t,h : Q_1 \to Q_0$ that assign the tail $ta$ and head $ha$ of each arrow $a$, commonly denoted $ta \xra{a} ha$. Note that we allow multiple arrows between two vertices and loops in the directed graph $Q$. 

Throughout this paper we always work over the complex numbers $\C$. A representation $V$ of $Q$ is a family of finite-dimensional vector spaces (over $\C$) $\{V(x) \mid x \in Q_0\}$ together with a family of linear transformations $\{V(a):V(ta) \to V(ha) \mid a \in Q_1\}$. For a representation $V$, its dimension vector $\dmm \, V$ is defined by $\dmm \, V(x) = \dm_\C V(x)$ for all $x \in Q_0$. 
The dimension vectors of representations of $Q$ then lie in $\Gamma = \Z^{Q_0}$, the set of integer-valued functions on $Q_0.$
For each vertex $x \in Q_0$, there is a simple representation $S_x$ defined by the dimension vector $e_x(y) = \delta_{x,y}$ for all $y \in Q_0$, where $\delta_{x,y}$ is the Kronecker delta.

Given two representations $V$ and $W$ of $Q$, define a morphism $\phi : V \to W$ of representations to be a collection of linear maps $\{\phi(x): V(x) \to W(x) \mid x \in Q_0\}$ such that for every arrow $a \in Q_1$ we have $\phi(ha)V(a) = W(a) \phi(ta)$, meaning the diagram
\[
\xymatrix{ 
V(ta) \ar[r]^{\phi(ta)} \ar[d]_{V(a)} & W(ta) \ar[d]^{W(a)} \\
V(ha) \ar[r]^{\phi(ha)} & W(ha)
}
\]
commutes. Define $\Hom_Q(V,W)$, or simply $\Hom(V,W)$, to be the $\C$-vector space of all morphisms from $V$ to $W$. We thus obtain the abelian category $\text{Rep}(Q)$ of all quiver representations of $Q$. We call $V'$ a subrepresentation of $V$ if $V'(x)$ is a subspace of $V(x)$ for all vertices $x \in Q_0$ and $V'(a) = V(a)|_{V'(ta)}$ for all arrows $a \in Q_1$.

For any $\alpha, \beta \in \Gamma$, define the Euler form by 
\[
\langle \alpha, \beta \rangle = \dsp \sum_{x \in Q_0} \alpha(x) \beta(x) - \sum_{a \in Q_1} \alpha(ta) \beta(ha).
\]

\subsection{Semi-invariants for quivers}
For a dimension vector $\beta$ of a quiver $Q$, the representation space of $\beta$-dimensional representations of $Q$ is defined as 
\[
\Rep = \bigoplus_{a \in Q_1} \Hom \left( \C^{\beta(ta)}, \C^{\beta(ha)} \right).
\]
Note that this is simply an affine space. If $\gl(\beta) = \prod_{x \in Q_0} \gl(\beta(x))$, then there is a natural action of $\gl(\beta)$ on $\Rep$ given by simultaneous conjugation: for $g = (g(x))_{x \in Q_0} \in \gl(\beta)$ and $V = \{V(a)\}_{a \in Q_1} \in \Rep$, $g \cdot V$ is defined by 
\[
(g \cdot V)(a) = g(ha)V(a)g(ta)^{-1} \quad \forall a \in Q_1.
\]
Hence, $\Rep$ is a rational representation of the linearly reductive group $\gl(\beta)$ and the $\gl(\beta)$-orbits parameterize the isomorphism classes of $\beta$-dimension representations of $Q$ since the action is simply base change (with respect to a specified basis). If $Q$ is without oriented cycles, there is only one closed $\gl(\beta)$-orbit in $\Rep$ (specifically, the orbit of the unique $\beta$-dimensional semisimple representation $\bigoplus_{x \in Q_0} S_x^{\beta(x)})$, so the invariant ring $\C[\Rep]^{\gl(\beta)}$ is simply $\C$. However, while there are only constant $\gl(\beta)$-invariant polynomial functions on $\Rep$, the action descends to that of the subgroup $\spl(\beta)$, and the invariant ring under the action of this group is highly nontrivial.

Let $\si{\beta} = \C[\Rep]^{\spl(\beta)}$ be the ring of semi-invariants. Since $\gl(\beta)$ is linearly reductive and $\spl(\beta)$ is the commutator subgroup of $\gl(\beta)$, we have the weight space  decomposition
\[
\si{\beta} = \bigoplus_{\sigma \in X^*(\gl(\beta))} \si{\beta}_\sigma,
\]
where $X^*(\gl(\beta))$ is the group of rational characters of $\gl(\beta)$ and 
\[
\si{\beta}_\sigma = \{f \in \C[\Rep] \mid g \cdot f = \sigma(g)f \; \; \forall g \in \gl(\beta)\}
\]
is the space of semi-invariants of weight $\sigma$. A character (or weight) of $\gl(\beta)$ is of the form
\[
\{g(x) \mid x \in Q_0\} \in \gl(\beta) \mapsto \prod_{x \in Q_0} (\det g(x))^{\sigma(x)}
\]
for $\sigma(x) \in \Z$ for all $x \in Q_0$, so we may identify $X^*(\gl(\beta))$ with $\Z^{Q_0}$. For an integer-valued function $\alpha$ on $Q_0$, define $\sigma = \langle \alpha, \cdot \rangle$ by 
\[
\sigma(x) = \langle \alpha, e_x \rangle = \alpha(x) - \sum_{y \to x} \alpha(y), \quad \forall x \in Q_0.
\]
One can similarly define $\sigma = \langle \cdot, \alpha \rangle.$

Given a quiver $Q$ and dimension vector $\beta$, define the set $\Sigma(Q,\beta)$ to be the set of (integral) effective weights:
\[
\Sigma(Q,\beta) = \{\sigma \in \Z^{Q_0} \mid \si{\beta}_\sigma \neq 0\}.
\]
Schofield  \cite{Sch91} constructed distinguished semi-invariants for quivers that proved to be quite useful in studying the ring of semi-invariants. Derksen and Weyman \cite{DW00a} (see also \cite{SB01}) showed that these semi-invariants in fact span all spaces of semi-invariants. An important consequence of this result is the following saturation property.

\begin{theorem}[\cite{DW00a}, Theorem 3]
\label{saturation}
If $Q$ is a quiver without oriented cycles and $\beta$ is a dimension vector, then the set 
\[
\Sigma(Q,\beta) = \{\sigma \in \Z^{Q_0} \mid \si{\beta}_\sigma \neq 0\}
\]
is saturated, that is, if $\sigma$ is a weight and $r \geq 1$ an integer,
\[
\si{\beta}_\sigma \neq 0 \Longleftrightarrow \si{\beta}_{r \sigma} \neq 0.
\]
\end{theorem}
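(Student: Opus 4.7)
The easy direction, $\si{\beta}_\sigma \neq 0 \Rightarrow \si{\beta}_{r\sigma} \neq 0$, is immediate: given nonzero $f \in \si{\beta}_\sigma$, the $r$-th power $f^r \in \si{\beta}_{r\sigma}$ is also nonzero. The substance of the theorem is the reverse implication, and my plan is to prove it through the Schofield / Derksen-Weyman semi-invariant framework referenced in the paragraph just before the theorem.

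I would first set up the Schofield semi-invariants $c^W$. Given a dimension vector $\alpha \in \Z_{\geq 0}^{Q_0}$ with $\langle \alpha, \beta \rangle = 0$ and a representation $W$ of dimension $\alpha$, the absence of oriented cycles forces a minimal projective resolution $0 \to P_1 \xrightarrow{d^W} P_0 \to W \to 0$, and applying $\Hom(-, V)$ yields a square matrix whose determinant $c^W(V) := \det\bigl(\Hom(d^W, V)\bigr)$ is a $\gl(\beta)$-semi-invariant on $\Rep$ of weight $\langle \alpha, \cdot \rangle$. Two facts drive the rest of the argument: (i) the explicit identification $\ker \Hom(d^W, V) = \Hom_Q(W, V)$ (with cokernel $\Ext^1_Q(W, V)$) shows that $c^W \neq 0$ if and only if $\Hom_Q(W, V) = 0$ for some (equivalently, generic) $V \in \Rep$; and (ii) the Derksen-Weyman / Schofield-van den Bergh spanning theorem quoted in the text, which says the $c^W$ span every weight space of $\si{\beta}$. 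Because $Q$ is acyclic, the Euler matrix is unimodular upper-triangular, so every weight $\sigma \in \Z^{Q_0}$ has a well-defined preimage $\alpha$ under $\alpha \mapsto \langle \alpha, \cdot \rangle$, and effectivity considerations let us take $\alpha \in \Z_{\geq 0}^{Q_0}$ whenever $\sigma \in \Sigma(Q, \beta)$. Together these yield the geometric criterion
\[
\si{\beta}_\sigma \neq 0 \iff \exists\, W \in \Rep(Q, \alpha) \text{ with } \Hom_Q(W, V) = 0 \text{ for generic } V.
\]

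With this criterion in hand, the theorem reduces to the following geometric claim, which forms the heart of the proof: \emph{if there exists $W' \in \Rep(Q, r\alpha)$ with $\Hom_Q(W', V) = 0$ for generic $V$, then there exists $W \in \Rep(Q, \alpha)$ with the same property.} I plan to prove the claim by invoking the Kac-Schofield canonical decomposition $r\alpha = \gamma_1 + \cdots + \gamma_k$ into Schur roots; the generic $W'$ of dimension $r\alpha$ is a direct sum of generic representations of the $\gamma_i$'s, and generic Hom-vanishing against $V$ descends to each such summand. The main obstacle will be to regroup and rescale the Schur-root summands to produce a representation of dimension \emph{exactly} $\alpha$ with Hom-vanishing against generic $V$, since canonical decompositions do not in general respect multiplication of the dimension vector by $r$ (and a naive subadditivity argument $\dim \Hom(W^{\oplus r}, V) = r\dim \Hom(W, V)$ gives the wrong inequality). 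I expect to resolve this by combining Schofield's reciprocity $\dim \si{\beta}_{\langle \alpha, \cdot \rangle} = \dim \si{\alpha}_{-\langle \cdot, \beta \rangle}$ with an induction on a suitable partial order on dimension vectors; the acyclicity of $Q$ is used essentially at both steps.
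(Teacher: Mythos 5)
Your setup through the geometric criterion is fine, but the argument stalls exactly where you flag the obstacle, and the tools you propose to finish it are not up to the job. The reduction to the claim ``$\hom(r\alpha,\beta)=0 \Rightarrow \hom(\alpha,\beta)=0$'' (where $\alpha$ is the unique solution of $\langle\alpha,\cdot\rangle=\sigma$) is a mere rephrasing of the theorem, since the spanning result gives $\si{\beta}_{\langle\gamma,\cdot\rangle}\neq 0 \iff \hom(\gamma,\beta)=0$ for any $\gamma$ with $\langle\gamma,\beta\rangle=0$; no progress has been made. The canonical-decomposition plan then has no mechanism for descent: the Schur roots $\gamma_i$ you obtain sum to $r\alpha$, not $\alpha$, and nothing forces a subcollection (or rescaling) of them to assemble into a representation of dimension exactly $\alpha$ — indeed for a non-isotropic imaginary Schur root $\alpha$ the canonical decomposition of $r\alpha$ is just $r\alpha$ itself, so you are back where you started. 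Finally, the reciprocity $\dim\si{\beta}_{\langle\alpha,\cdot\rangle}=\dim\si{\alpha}_{-\langle\cdot,\beta\rangle}$ is symmetric in the two slots but does nothing about the scalar $r$; the paper's remark that $\alpha\circ\beta\neq 0 \iff r\alpha\circ s\beta\neq 0$ is deduced \emph{from} saturation, so invoking it here would be circular.

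The route actually taken by Derksen and Weyman (and quoted verbatim as Theorem~\ref{spanning} in this paper, the same ``Theorem 3'' of \cite{DW00a} you cite) is much shorter once one has the spanning theorem you already set up: the $c^W$ spanning all weight spaces yields the characterization
\[
\Sigma(Q,\beta)=\{\sigma\in\Z^{Q_0} \mid \sigma(\beta)=0 \text{ and } \sigma(\beta')\leq 0 \text{ for every } \beta'\hookrightarrow\beta\}.
\]
The right-hand side is cut out by a fixed system of homogeneous linear inequalities in $\sigma$ (the set of $\beta'$ with $\beta'\hookrightarrow\beta$ does not depend on $\sigma$), so $\Sigma(Q,\beta)$ is precisely the set of lattice points of a rational convex cone, and saturation follows immediately: $\sigma$ satisfies those homogeneous inequalities if and only if $r\sigma$ does. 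You should replace the canonical-decomposition plan with this observation; everything you wrote up to and including the invocation of the spanning theorem is the correct input, and the homogeneity of the resulting inequality description is the missing closing step.
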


We will later use this theorem to prove the saturation of the multiplicity (\ref{one}) and give an explicit description of the nonzero generalized Littlewood-Richardson coefficients of this form.

\vspace{.2in}

\section{Representing multiplicity (\ref{one}) as the dimension of a weight space of semi-invariants}\label{section-saturation}
 
\subsection{Saturation theorem}
In this section we will show that the multiplicity (\ref{one})  described in the  branching rule for the diagonal embedding for $\gl(n)$ stated in the introduction arises as the dimension of the weight space of semi-invariants for a certain quiver and dimension vector which we construct. A proof of the saturation of the multiplicity will then follow from Theorem \ref{saturation}.

\subsection{Sun quiver}

Construct a quiver $Q$ in the following way: for $k \geq 2$, start with a regular $2k$-gon with the vertices labeled $(n,i)$, $1 \leq i \leq 2k$, which we call the central vertices, and an arrow connecting $(n,i)$ with $(n,i+1)$ (we will always consider $(n,2k+1) = (n,1)$), where the arrows alternate in direction. At each central vertex $(n,i)$ attach an equioriented $A_n$ quiver, called a flag and denoted $\F(i)$, where each $A_n$ is directed the same way as the arrows at the central vertex $(n,i)$. We will later associate each flag with a weakly decreasing sequence with at most $n$ parts and each central arrow with some other partition. For instance, for $k=3$ the quiver looks like
\[
\xymatrix{&&&& \ar@{~>}[dl]^{\lambda(2)} \\
&& 3 \ar@{~>}[ul]_{\lambda(3)} & 2 \ar[l]_{\alpha(2)} \ar[dr]^{\alpha(1)} \\
\ar@{~>}[r]_{\lambda(4)} & 4 \ar[ur]^{\alpha(3)} \ar[dr]_{\alpha(4)} &&& 1  \ar@{~>}[r]_{\lambda(1)}& \\
&& 5 \ar@{~>}[dl]_{\lambda(5)} & 6 \ar[l]^{\alpha(5)} \ar[ur]_{\alpha(6)} \\
&&&& \ar@{~>}[ul]_{\lambda(6)}}
\]
with $n$ vertices along each flag, denoted here by wavy arrows. Label the $j^{th}$ vertex along the $i^{th}$ flag by $(j,i)$, numbered so that $(n,i)$ denotes each center vertex or simply $i$ when it is understood. For consistency we'll always have $\F(i)$ going into the central vertex if $i$ is even and out if $i$ is odd. 

Define the dimension vector $\beta$ as $\beta(j,i) = j$  for each $1\leq i \leq 2k$ and $1 \leq j \leq n$. We will show that the generalized Littlewood-Richardson coefficient in (\ref{one}) is the dimension of the weight space of semi-invariants for a certain weight for this quiver  and the associated dimension vector $\beta$. We have labeled each flag and central arrow by the sequence we will want to eventually associate to it when we calculate the dimension of this particular weight space of semi-invariants. 
More specifically, we'll associate the weakly decreasing sequence $\lambda(i)$ to flag $\F(i)$ with central arrows $\alpha(i-1), \, \alpha(i)$ both entering vertex $(n,i)$ when $i$ is odd and leaving when $i$ is even, and $\alpha(j)$ will likewise denote the partition associated to this arrow.
Throughout the rest of this paper all results except in section \ref{section-others} will be for $k \geq 2$ and the quiver $Q$  with $2k$ flags, which we call the \bi{sun quiver} or the \bi{$2k$-sun quiver} when we want to emphasize the number of flags, and $\beta$ is the dimension vector defined by $\beta(j,i) = j$.

\begin{lemma}
\label{sun-2}
Let $\sigma \in \Z^{Q_0}$ be a weight for the $2k$-sun quiver, $k \geq 2$. If $\dm \si{\beta}_\sigma \neq 0$, then the weight must satisfy $(-1)^i \sigma(j,i) \geq 0$ for all $1 \leq j \leq n, \, 1 \leq i \leq 2k$. Furthermore,
\[
\dm \si{\beta}_\sigma = \sum c^{\phi(1)}_{\alpha(1), \alpha(2)}  c^{\phi(2)}_{\alpha(2), \alpha(3)} \, \cdots \, c^{\phi(2k)}_{\alpha(2k), \alpha(1)}, 
\]
where the sum ranges over all partitions $\alpha(1), \ldots, \alpha(2k),$ and $\phi(i) = (n^{(-1)^i\sigma(n,i)}, \ldots, 1^{(-1)^i \sigma(1,i)})'$ for $1 \leq i \leq 2k$.
\end{lemma}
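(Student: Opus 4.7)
The plan is to expand $\C[\Rep]$ with the Cauchy formula, extract the $\sigma$-isotypic component vertex by vertex, and show that each flag collapses to a single Schur module at its central vertex, leaving a purely Littlewood-Richardson calculation at the center.

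To begin, apply Cauchy's identity arrow by arrow to obtain
\[
\C[\Rep] = \bigoplus_{(\mu_a)_{a \in Q_1}} \bigotimes_{a \in Q_1} S^{\mu_a}\C^{\beta(ta)} \otimes S^{\mu_a}(\C^{\beta(ha)})^*,
\]
where each $\mu_a$ ranges over partitions of length at most $\min(\beta(ta),\beta(ha))$. Grouping Schur factors by vertex and extracting the $\sigma$-isotypic piece yields
\[
\dm \si{\beta}_\sigma = \sum_{(\mu_a)} \prod_{x \in Q_0} \dm \Hom_{\gl(\beta(x))}\bigl({\det}^{\sigma(x)},\, R_x\bigr),
\]
where $R_x$ is the tensor product of the Schur factors at $x$ contributed by its incident arrows.

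Next I would fix $i$ odd and analyze the flag $\F(i)$, writing $\mu^{(j)}$ for the partition on the arrow $(j,i)\to(j-1,i)$. At the outer vertex $(1,i)$ only one arrow contributes, forcing $\mu^{(2)}=(-\sigma(1,i))$ and hence $\sigma(1,i)\le 0$. At each interior flag vertex $1<j<n$, Frobenius reciprocity together with the identity ${\det}^a \otimes S^\lambda = S^{\lambda+(a^j)}$ rewrites the local multiplicity as
\[
\dm \Hom_{\gl(j)}\bigl(S^{\mu^{(j+1)}+(\sigma(j,i)^j)},\, S^{\mu^{(j)}}\bigr),
\]
which by Schur's lemma is $1$ when $\mu^{(j)}=\mu^{(j+1)}+(\sigma(j,i)^j)$ and $0$ otherwise. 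This recursion determines $\mu^{(j+1)}$ uniquely from $\mu^{(j)}$ and forces $\sigma(j,i)\le 0$ for $1\le j < n$; unwinding gives $\mu^{(j+1)}_s = -\sum_{\ell=s}^{j}\sigma(\ell,i)$. The even-$i$ flags are handled by the mirror image of this argument and produce $\sigma(j,i)\ge 0$ for $1\le j < n$.

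At the central vertex $(n,i)$ with $i$ odd, the flag has collapsed to the single factor $S^{\mu^{(n)}}\C^n$, so the local representation is $S^{\mu^{(n)}}\otimes S^{\alpha(i-1)}(\C^n)^*\otimes S^{\alpha(i)}(\C^n)^*$, and a further Frobenius reciprocity identifies the local multiplicity with $c^{\mu^{(n)}-(\sigma(n,i)^n)}_{\alpha(i-1),\alpha(i)}$. A direct computation then shows $(\mu^{(n)}-(\sigma(n,i)^n))_s = -\sum_{\ell=s}^n\sigma(\ell,i)$, which one checks is exactly the $s$-th entry of the conjugate of $(n^{-\sigma(n,i)},\dots,1^{-\sigma(1,i)})$, i.e., of $\phi(i)$; insisting that $\phi(i)$ be a partition yields the remaining sign condition $\sigma(n,i)\le 0$. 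The even-$i$ vertices are symmetric and give the same local factor $c^{\phi(i)}_{\alpha(i-1),\alpha(i)}$. Summing over the free central-arrow partitions $\alpha(1),\dots,\alpha(2k)$ and applying the symmetry $c^\nu_{\lambda,\mu}=c^\nu_{\mu,\lambda}$ together with a cyclic shift of indices produces the stated formula. The main obstacle is the bookkeeping: keeping duals, rectangle $\det$-shifts, and partition conjugation aligned so that $\mu^{(n)}-(\sigma(n,i)^n)$ is identified with $\phi(i)$ and not with a close cousin, and so that the cyclic pattern of $\alpha(i-1),\alpha(i)$ matches the one in the statement.
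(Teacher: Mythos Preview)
Your proposal is correct and follows essentially the same approach as the paper: apply Cauchy to $\C[\Rep]$, analyze each flag vertex by vertex so that the partitions collapse to a single Schur factor at the central vertex, and then read off Littlewood--Richardson coefficients at the center. The only cosmetic difference is packaging---you extract the $\sigma$-isotypic piece as $\Hom_{\gl(\beta(x))}(\det^{\sigma(x)},R_x)$ at every vertex, while the paper first takes $\spl$-invariants along the flags and defers the determinant twist to the central vertices; the recursion $\mu^{(j)}=\mu^{(j+1)}+(\sigma(j,i)^j)$ you obtain is exactly the paper's ``$\phi^j(i)$ is $\phi^{j-1}(i)$ plus a rectangle of size $-\sigma(j,i)\times j$'' read in conjugate coordinates.
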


\begin{proof}
Define $V_j(i) = \CC^{\beta(j,i)}$ as the vector space assigned to vertex $(j,i)$. A standard calculation using Cauchy's rule (see \cite{Ful97a}, page 121) shows that the affine coordinate ring $\CC[\Rep]$ decomposes as a sum of tensor products of irreducible representations of the general linear groups $\gl(V_j(i))$ with contributions from each of the flags and central arrows. Specifically, if $\F(i)$ is a flag going out of a central vertex, meaning when $i$ is odd, then the $n-1$ arrows of the flag contribute
\[
\dsp \bigoplus_{\phi^1(i), \ldots, \phi^{n-1}(i)} S^{\phi^1(i)} V_1(i)^* \otimes \bigotimes_{j=2}^{n-1} (S^{\phi^{j-1}(i)} V_j(i) \otimes S^{\phi^j(i)} V_j(i)^*) \otimes S^{\phi^{n-1}(i)} V_n(i)
\]
for some partitions $\phi^1(i), \ldots, \phi^{n-1}(i)$. We want to determine when these terms give nonzero semi-invariants of weight $\sigma$.  Because the $j^{th}$ term of $\spl(\beta)$ acts trivially on each $S^{\phi^m(i)} V_k(i)$ whenever $j \neq k$, the terms of $\spl(\beta)$ distribute to the corresponding terms of the sum across the tensor products. 

The term $(S^{\phi^1(i)} V_1(i)^*)^{\spl(V_1(i))} \neq 0$ if and only if $\phi^1(i)$ is of size $w \times \dm V_1(i) = w \times 1$ for some $w \in \Z_{\geq 0}$.
Hence, in this case, the space is one-dimensional and is spanned by a semi-invariant of weight $-w$. Therefore, $(S^{\phi^1(i)} V_1(i)^*)^{\spl(V_1(i))}$ contains a nonzero semi-invariant of weight $\sigma (1,i)$ if and only if $\sigma(1,i) < 0$ and $\phi^1(i)$ is of size $-\sigma(1,i)\times 1 = (1^{-\sigma(1,i)})'$. We know from this that $(S^{\phi^1(i)} V_1(i)^*)^{\spl(V_1(i))}$ is nonzero if and only if it is one-dimensional.

Next, $(S^{\phi^1(i)} V_2 (i) \otimes S^{\phi^2(i)} V_2(i)^*)^{\spl(V_2(i))}$ is nonzero if and only if $\phi^2(i)_p - \phi^1(i)_p = k \in \Z_{\geq 0}$ for all $p$. That is, $\phi^2(i)$ is $\phi^1(i)$ plus some extra columns, which must be of length $\dm V_2(i) =\beta(i,2) =  2$. In this case, the space being nonzero is equivalent to it being spanned by a semi-invariant of weight equal to the negative of the number of extra columns. Hence, $(S^{\phi^1(i)} V_2 (i) \otimes S^{\phi^2(i)} V_2(i)^*)^{\spl(V_2(i))}$ contains a semi-invariant of weight $\sigma(2,i)$ if and only if the space is one-dimensional and $\phi^2(i)=(2^{-\sigma(2,i)}, 1^{-\sigma(1,i)})'.$

Reasoning this way and continuing by sorting the spaces of semi-invariants in $\si{\beta}$ of weight $\sigma$, we have that $\phi^1(i)$ is of size $-\sigma(1,i) \times 1$ and $\phi^j(i)$ is attained from $\phi^{j-1}(i)$ by adjoining a rectangle of size $-\sigma(j,i) \times j$ to the left of it. Thus, $\phi^{n-1}(i) = ((n-1)^{-\sigma(n-1,i)}, \ldots, 1^{-\sigma(1,i)})'$ and the contribution of the flag $\F(i)$ for odd $i$ to $\si{\beta}_{\sigma}$ is precisely $S^{\phi^{n-1}(i)} V_n(i)$. 

Similarly, if $\F(i)$ is a flag going into a central vertex, meaning $i$ is even, then $\sigma(j,i) \geq 0$ for all $1 \leq j \leq n-1$ and the contribution of the flag $\F(i)$ to $\dm \si{\beta}_{\sigma}$ is $\dm S^{\phi^{n-1}(i)} V_n(i)^*$ with 
$$\phi^{n-1}(i) = ((n-1)^{\sigma(n-1,i)}, \ldots, 1^{\sigma(1,i)})'.$$

In addition, the $2k$ central arrows give unspecified partitions $\alpha_i$ with at most $n$ parts each. By taking into account the weights at the central vertices and denoting $V_n(i)$ as simply $V(i)$, we may tensor the contributions from the central vertices to the space of semi-invariants with appropriate powers of the determinant to obtain $\gl$-representations, the dimensions of which will be the Littlewood-Richardson coefficients we want. To be precise, we get 
\[
\begin{array}{rcl}
\dm \left((S^{\phi^{n-1}(2i-1)}V(2i-1) \otimes S^{\alpha_{2i-2}} V(2i-1)^* \otimes S^{\alpha_{2i-1}}V(2i-1)^* \otimes \dt_{V(2i-1)}^{-\sigma(n,2i-1)})^{\gl(V(2i-1))}\right) & = & c^{\phi(2i-1)}_{\alpha_{2i-2}, \alpha_{2i-1}} \\\\
\dm \left((S^{\phi^{n-1}(2i)}V(2i)^* \otimes S^{\alpha_{2i}} V(2i) \otimes S^{\alpha_{2i-1}}V(2i) \otimes \dt_{V(2i)}^{-\sigma(n,2i)})^{\gl(V(2i))} \right) &= &c^{\phi(2i)}_{\alpha_{2i-1}, \alpha_{2i}} 
\end{array}
\]
for each $i=1, \ldots, k$ (recall that we consider the central vertex $(n,2k)$ to be the same as $(n,0)$ and so on). Putting these together, the dimension of $\si{\beta}_{\sigma}$ is as stated.

\end{proof}

For weakly decreasing sequences $\lambda(1), \ldots, \lambda(2k) $ of $n$ integers, define the weight $\sigma_1$ as
\begin{equation}
\label{weight-sigma_1}
\sigma_1(j,i) = \begin{cases}
(-1)^i(\lambda(i)_j - \lambda(i)_{j+1}) & 1 \leq i \leq 2k, \, 1 \leq j \leq n-1 \\
(-1)^i \lambda(i)_n & 1 \leq i \leq 2k, \, j = n. \end{cases}
\end{equation}

The following is immediate from calculating what the $\phi(i)$ are with respect to this weight.

\begin{lemma}
\label{si-saturation}
Let $\lambda(1),\ldots, \lambda(2k)$, $k\geq 2$, be weakly decreasing sequences of $n$ integers. Then for every integer $r \geq 1$, we have 
\[
f(r \lambda(1), \ldots, r\lambda(2k)) =  \sum c^{r\lambda(1)}_{\alpha(1), \alpha(2)}  c^{r\lambda(2)}_{\alpha(2), \alpha(3)} \, \cdots \, c^{r\lambda(2k)}_{\alpha(2k), \alpha(1)} = \dm \si{\beta}_{r\sigma_1}.
\]
In particular, when $k=3$ and $r=1$ the dimension of this weight space of semi-invariants is the multiplicity of the branching rule  in (\ref{one}).
\end{lemma}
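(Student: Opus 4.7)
The plan is to invoke Lemma \ref{sun-2} directly with the weight $\sigma = r\sigma_1$ and verify that the resulting partitions $\phi(i)$ appearing in the summation are exactly $r\lambda(i)$, so that the sum produced by Lemma \ref{sun-2} literally becomes the defining sum of $f(r\lambda(1), \ldots, r\lambda(2k))$.

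First, I would confirm that the nonvanishing condition $(-1)^i r\sigma_1(j,i) \geq 0$ from Lemma \ref{sun-2} holds. Unpacking the definition of $\sigma_1$, we have $(-1)^i\sigma_1(j,i) = \lambda(i)_j - \lambda(i)_{j+1} \geq 0$ for $1 \leq j \leq n-1$ by the weak decreasingness of $\lambda(i)$, and $(-1)^i\sigma_1(n,i) = \lambda(i)_n$. Hence the condition holds provided each $\lambda(i)$ is a genuine partition. If some $\lambda(i)_n < 0$, I would first reduce to the partition case by uniformly shifting each $\lambda(i)$ by a sufficiently large constant vector $(c^n)$; because every $\alpha(j)$ appears exactly once as a lower subscript and exactly once as an upper subscript in the cyclic product (after absorbing the shift into the $\alpha(j)$ in a balanced way), this operation leaves every Littlewood-Richardson coefficient in the sum unchanged, and it leaves the corresponding weight $\sigma_1$ shifted only at the central vertices. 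Since this shift reduces the general case to the partition case, I will assume from now on that each $\lambda(i)$ is a partition.

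Second, I would explicitly compute $\phi(i)$ from the formula in Lemma \ref{sun-2}. Substituting $(-1)^i r\sigma_1(j,i) = r(\lambda(i)_j - \lambda(i)_{j+1})$ for $j < n$ and $(-1)^ir\sigma_1(n,i) = r\lambda(i)_n$ yields
\[
\phi(i) = \bigl(n^{r\lambda(i)_n},\, (n-1)^{r(\lambda(i)_{n-1}-\lambda(i)_n)},\, \ldots,\, 1^{r(\lambda(i)_1-\lambda(i)_2)}\bigr)'.
\]
To take the conjugate, observe that the number of parts of the pre-conjugation partition that are $\geq k$ is the telescoping sum
\[
r\lambda(i)_n + r(\lambda(i)_{n-1}-\lambda(i)_n) + \cdots + r(\lambda(i)_k - \lambda(i)_{k+1}) = r\lambda(i)_k.
\]
Thus the $k$-th part of $\phi(i)$ equals $r\lambda(i)_k$, i.e., $\phi(i) = r\lambda(i)$.

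Finally, plugging this identification back into Lemma \ref{sun-2} gives
\[
\dim \semi(Q,\beta)_{r\sigma_1} \;=\; \sum c^{r\lambda(1)}_{\alpha(1),\alpha(2)}\, c^{r\lambda(2)}_{\alpha(2),\alpha(3)} \cdots c^{r\lambda(2k)}_{\alpha(2k),\alpha(1)} \;=\; f(r\lambda(1),\ldots, r\lambda(2k)),
\]
which is the desired equality; the case $k=3$, $r=1$ then specializes to the branching rule multiplicity (\ref{one}). The only real content beyond citing Lemma \ref{sun-2} is the conjugation calculation of $\phi(i)$, which is elementary but must be carried out with care in the indexing; the shift reduction to the partition case is also routine but should be mentioned since the statement allows arbitrary integer sequences.
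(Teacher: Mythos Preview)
Your approach is the same as the paper's: invoke Lemma~\ref{sun-2} with $\sigma = r\sigma_1$ and check that $\phi(i) = r\lambda(i)$. Your explicit conjugation computation is correct and more detailed than what the paper writes; the paper simply asserts $\phi(i) = \lambda(i)$ and says the general $r$ is proved the same way.

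One correction: your shift reduction for the case $\lambda(i)_n < 0$ is both unnecessary and misargued. You claim ``every $\alpha(j)$ appears exactly once as a lower subscript and exactly once as an upper subscript,'' but in fact each $\alpha(j)$ appears \emph{twice} as a lower subscript (in $c^{\lambda(j-1)}_{\alpha(j-1),\alpha(j)}$ and $c^{\lambda(j)}_{\alpha(j),\alpha(j+1)}$) and never as an upper subscript. A balanced shift can still be arranged, but not for the reason you give. More to the point, no reduction is needed: if some $\lambda(i)_n < 0$ then Lemma~\ref{sun-2} directly forces $\dim\semi(Q,\beta)_{r\sigma_1} = 0$, and $f(r\lambda(1),\ldots,r\lambda(2k)) = 0$ as well since each $c^{r\lambda(i)}_{\alpha,\alpha'}$ with $\alpha,\alpha'$ partitions vanishes unless $r\lambda(i)$ is a partition. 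The paper handles this in the remark immediately following the lemma. So drop the shift paragraph and simply observe that both sides vanish when some $\lambda(i)$ fails to be a partition.
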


\begin{proof}
Because the general case is proven in precisely the same way, assume $r=1$. The proof is then the same as that of Lemma \ref{sun-2} because $\phi(i) = \lambda(i)$ with this weight.
\end{proof}

\begin{remark}
While it is clear that $\lambda(1),\ldots, \lambda(2k)$ must be partitions if $f(\lambda(1),\ldots, \lambda(2k))$ is to be nonzero, this is verified from the conditions for $\sigma$ in Lemma \ref{sun-2} and the description of $\sigma_1$.
\end{remark}

\begin{proof}[Proof of Theorem \ref{saturation-sun}]
By representing the multiplicity as the dimension of a weight space of semi-invariants as in Lemma \ref{si-saturation},  the saturation of this multiplicity immediately follows from Theorem \ref{saturation}.
\end{proof}

\begin{remark}
In this way, we have written the generalized Littlewood-Richardson coefficient $f(\lambda(1), \ldots, \lambda(m))$ as the dimension of a certain weight space of semi-invariants of some quiver. However, we can only express the generalized Littlewood-Richardson coefficient in terms of quiver invariant theory when $m$ is even and at least four. When $m \geq 3$ is odd, this process fails because the first and last flags will be oriented the same direction which would require the central arrow connecting the first and last central vertices to be pointed both directions, an impossibility, while if $m=2$ we would have an oriented cycle.
\end{remark}

\vspace{.2in}

\section{The facets of the cone of effective weights}\label{section-facets}

Recall that for a quiver $Q$ and dimension vector $\beta$, the set of (integral) effective weights is 
\[
\Sigma(Q,\beta)= \{\sigma \in \Z^{Q_0} \mid \si{\beta}_\sigma \neq 0 \}.
\]
If $\sigma \in \R^{Q_0}$ is a real-valued function on the set of vertices $Q_0$ and $\alpha$ is an integer-valued function on $Q_0$, define $\sigma(\alpha)$ by 
\[
\sigma(\alpha) = \sum_{x \in Q_0} \sigma(x) \alpha(x).
\]

The condition $\sigma(\beta)=0$ is clearly necessary for $\sigma$ to be effective. This is because the action of the one-dimensional torus $\{(t \Id_{\beta(i)})_{i \in Q_0} \mid t \in k \backslash \{0\}\}$ on $\Rep$ is trivial, so if $f$ is a nonzero semi-invariant of weight $\sigma$ and $g_t = (t\Id_{\beta(i)})_{i \in Q_0} \in \gl(\beta)$, then 
\[
g_t \cdot f = t^{\sigma(\beta)} \cdot f,
\]
which implies $\sigma(\beta)=0$. Surprisingly, satisfying a certain set of linear homogenous inequalities is sufficient for a weight to be effective (see Theorems \ref{spanning} and  \ref{facets}).

King \cite{Kin94} gave the following numerical criterion for $\sigma$-(semi-)stability for finite-dimensional algebras based on the Hilbert-Mumford criterion from GIT. (King's criterion differs in sign from our convention, which is why the inequalities in the following theorem go the opposite direction as the ones in his original paper.)

\begin{theorem} Let $Q$ be a quiver, $\beta$ a dimension vector, and $V \in \Rep$. Suppose $\sigma \in \Z^{Q_0}$ is a weight such that $\sigma(V) = 0$. Then
	\begin{enumerate}
		\item $V$ is $\sigma$-semi-stable if and only if $\sigma(\dmm \,{V'}) \leq 0$ for every subrepresentation $V'$ of $V$;
		\item $V$ is $\sigma$-stable if and only if $\sigma(\dmm \, {V'}) < 0$ for every proper nontrivial subrepresentation $V'$ of $V$.
	\end{enumerate}
We call $\beta$ \emph{$\sigma$-(semi)-stable} if there exists a $\sigma$-(semi-)stable representation in $\Rep$.
\end{theorem}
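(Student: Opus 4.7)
The plan is to invoke the Hilbert--Mumford numerical criterion from Geometric Invariant Theory applied to the $\gl(\beta)$-action on $\Rep$ linearized by the character $\chi_\sigma(g)=\prod_{x\in Q_0}(\det g(x))^{\sigma(x)}$. The GIT notion of $\chi_\sigma$-(semi)stability is exactly $\sigma$-(semi)stability in the sense above, so it is enough to translate the numerical criterion, which asserts that $V$ is semistable (resp. stable) if and only if the Mumford weight $\mu^{\chi_\sigma}(V,\lambda)$ has the appropriate sign for every nontrivial one-parameter subgroup $\lambda:\gm\to\gl(\beta)$ whose limit $\lim_{t\to 0}\lambda(t)\cdot V$ exists in $\Rep$.

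The key step is to identify such $1$-PS with filtrations by subrepresentations. Diagonalizing $\lambda$ yields a $\Z$-grading $V(x)=\bigoplus_{n\in\Z}V(x)^n$ at each vertex on which $\lambda(t)$ acts by $t^n$. Writing $V(a):V(ta)\to V(ha)$ in block form relative to these gradings, the $1$-PS scales the $(n,m)$-block by $t^{m-n}$, so $\lim_{t\to 0}\lambda(t)\cdot V$ exists in $\Rep$ exactly when the blocks with $m<n$ vanish, equivalently exactly when the subspaces $V_{\geq k}(x):=\bigoplus_{n\geq k}V(x)^n$ form a descending filtration of $V$ by subrepresentations; conversely, any finite filtration by subrepresentations arises from such a grading after choosing splittings. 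A direct calculation then gives
\[
\mu^{\chi_\sigma}(V,\lambda)=\sum_{x\in Q_0}\sigma(x)\sum_{n\in\Z}n\dim V(x)^n,
\]
and summation by parts combined with the hypothesis $\sigma(\beta)=0$ rewrites this, up to sign, as a non-negative linear combination of the quantities $\sigma(\dmm V_{\geq k})$.

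With this dictionary in hand both directions of (1) follow: if $\sigma(\dmm V')\leq 0$ for every subrepresentation $V'\subseteq V$, then every filtration produces a Mumford weight with the right sign and the Hilbert--Mumford criterion yields semistability; conversely, any single $V'\subseteq V$ gives the two-step filtration $0\subseteq V'\subseteq V$, forcing $\sigma(\dmm V')\leq 0$ whenever $V$ is semistable. Part (2) runs on the same template with strict inequalities, the only subtlety being that the $1$-PS yielding vanishing Mumford weight come from the trivial filtration $0\subseteq V$ or land in the diagonal torus $\{(t\cdot\Id_{\beta(x)})_{x\in Q_0}\}$ that acts trivially on $\Rep$, and may be ignored when testing stability. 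The main obstacle will be purely bookkeeping: keeping the sign convention consistent with the paper's $\chi_\sigma$ (which is opposite to King's original paper, as the authors note) and cleanly justifying the reduction from arbitrary filtrations to single subrepresentations via the non-negative-combination rewriting above---once these are settled, the theorem is a direct consequence of Hilbert--Mumford.
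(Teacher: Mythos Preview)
The paper does not actually prove this theorem: it is stated as King's numerical criterion and simply cited from \cite{Kin94}, with the remark that it ``is based on the Hilbert--Mumford criterion from GIT'' and that the sign convention differs from King's original. There is therefore no proof in the paper to compare against.

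Your proposal is correct and is precisely King's original argument: linearize by the character $\chi_\sigma$, identify one-parameter subgroups whose limit exists with $\Z$-indexed filtrations of $V$ by subrepresentations, compute the Mumford weight as $\sum_x\sigma(x)\sum_n n\dim V(x)^n$, and use Abel summation together with $\sigma(\beta)=0$ to rewrite it as $\sum_{k}\sigma(\dmm V_{\geq k})$ (a sum with non-negative coefficients once one fixes the indexing). The reduction to a single subrepresentation via the two-step filtration, and the handling of the kernel of the action (the diagonal $\gm$) for the stable case, are exactly the right moves. Your caveat about sign conventions is well placed: the paper uses $\sigma(\dmm V')\leq 0$ for subrepresentations, opposite to King's paper, so when you carry out the computation you should track whether you are pairing $\lambda$ with $\chi_\sigma$ or $\chi_\sigma^{-1}$ and make sure the final inequality comes out as stated here.
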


\begin{remark}
Because of this description of the $\sigma$-(semi)-stable representations, there is a full subcategory of $\text{Rep}(Q)$ consisting $\sigma$-(semi)-stable representations. This is an abelian category with simple objects being the $\sigma$-stable representations, and moreover because every representation has finite length, the subcategory is Artinian and Noetherian, so any $\sigma$-semi-stable representation has a Jordan-H\"{o}lder filtration with $\sigma$-stable factors.
\end{remark}

The following result is quite useful for calculations.

\begin{lemma}[Reciprocity Property]\emph{(\cite{DW00a}, Corollary 1)}
\label{reciprocity}
	For any dimension vectors $\alpha, \beta$ and quiver $Q$ without oriented cycles, we have
	\[
	\dm \si{\beta}_{\langle \alpha, \cdot, \rangle} = \dm \si{\alpha}_{-\langle \cdot, \beta \rangle}.
	\]
\end{lemma}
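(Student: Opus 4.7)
The plan is to deduce the reciprocity from Derksen--Weyman's spanning theorem for semi-invariants (which also underlies Theorem \ref{saturation}) together with Schofield's construction of determinantal semi-invariants. The argument proceeds in three conceptual pieces: reducing to the case $\langle \alpha, \beta \rangle = 0$, constructing a symmetric bi-semi-invariant on $\mathrm{Rep}(Q,\alpha) \times \mathrm{Rep}(Q,\beta)$, and extracting the equality of dimensions from the spanning theorem.

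First, both sides vanish unless $\langle \alpha, \beta \rangle = 0$. Indeed, the central torus $\{(t \cdot \Id_{\beta(x)})_{x \in Q_0} : t \in \C^\times\} \subseteq \gl(\beta)$ acts trivially on $\mathrm{Rep}(Q,\beta)$, so any nonzero $f \in \si{\beta}_\sigma$ must satisfy $\sigma(\beta) = 0$. Applied to $\sigma = \langle \alpha, \cdot \rangle$ this forces $\langle \alpha, \beta \rangle = 0$, and the analogous check on the right-hand side gives the same condition. Thus the problem reduces to the case $\langle \alpha, \beta\rangle = 0$, which I assume from now on.

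Since $Q$ has no oriented cycles, the path algebra $\C Q$ is hereditary, so every representation $V$ of dimension $\alpha$ admits a minimal projective resolution $0 \to P_1 \to P_0 \to V \to 0$. Applying $\Hom_Q(-, W)$ for a representation $W$ of dimension $\beta$ yields a linear map
\[
d^V_W \colon \Hom_Q(P_0, W) \longrightarrow \Hom_Q(P_1, W),
\]
and the standard dimension count via indecomposable projectives $P_x$ (for which $\dim \Hom_Q(P_x, W) = \beta(x)$) gives $\dim \Hom_Q(P_0, W) - \dim \Hom_Q(P_1, W) = \langle \alpha, \beta \rangle = 0$. Hence $d^V_W$ is a square matrix and $F(V, W) := \det d^V_W$ defines a polynomial function on $\mathrm{Rep}(Q,\alpha) \times \mathrm{Rep}(Q,\beta)$. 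Tracking the $\gl(\alpha) \times \gl(\beta)$-action through this functorial construction shows that $F$ is a bi-semi-invariant of bi-weight $(-\langle \cdot, \beta \rangle,\, \langle \alpha, \cdot \rangle)$, so
\[
F \in \si{\alpha}_{-\langle \cdot, \beta \rangle} \otimes \si{\beta}_{\langle \alpha, \cdot \rangle}.
\]

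Now I decompose $F$ as a tensor of minimal rank $r$, writing $F = \sum_{i=1}^r g_i \otimes h_i$ with $g_1,\ldots,g_r$ linearly independent in $\si{\alpha}_{-\langle \cdot, \beta \rangle}$ and $h_1,\ldots,h_r$ linearly independent in $\si{\beta}_{\langle \alpha, \cdot \rangle}$. For each fixed $V$ the specialization $c^V := F(V, \cdot) = \sum_i g_i(V)\, h_i$ is Schofield's determinantal semi-invariant attached to $V$ and lies in the span of $\{h_1, \ldots, h_r\}$. The decisive input from \cite{DW00a} is that the family $\{c^V\}_V$ spans all of $\si{\beta}_{\langle \alpha, \cdot \rangle}$, whence $\dim \si{\beta}_{\langle \alpha, \cdot \rangle} \leq r$; the reverse inequality is trivial, giving equality. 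The symmetric argument using $c_W := F(\cdot, W)$ and a minimal injective resolution of $W$ yields $\dim \si{\alpha}_{-\langle \cdot, \beta \rangle} = r$, and the reciprocity follows. The main obstacle is precisely this invocation of the Derksen--Weyman spanning theorem: without it one can still construct the determinantal semi-invariants $c^V$ and $c_W$, but cannot conclude that they exhaust the respective weight spaces, and the approach only yields an inequality.
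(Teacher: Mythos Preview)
The paper does not prove this lemma; it is quoted verbatim from \cite{DW00a} as an input and no argument is given. So there is no ``paper's proof'' to compare against. Your proposal is essentially the original Derksen--Weyman argument, and the overall strategy---build the determinantal bi-semi-invariant $F(V,W)$, decompose it as a tensor, and use the spanning theorem on each side---is correct.

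One imprecision is worth flagging. You write ``minimal projective resolution $0 \to P_1 \to P_0 \to V \to 0$,'' but the minimal resolution has terms that depend on the isomorphism class of $V$, so $\det d^V_W$ would not vary polynomially in $V$ and the resulting $F$ would not be a regular function on $\mathrm{Rep}(Q,\alpha)\times\mathrm{Rep}(Q,\beta)$. What is actually used is the \emph{canonical} (Ringel) resolution
\[
0 \longrightarrow \bigoplus_{a\in Q_1} P_{ha}\otimes V(ta) \longrightarrow \bigoplus_{x\in Q_0} P_x\otimes V(x) \longrightarrow V \longrightarrow 0,
\]
whose terms depend only on $\alpha=\underline{\dim}\,V$; applying $\Hom_Q(-,W)$ then yields the explicit linear map $\bigoplus_x \Hom(V(x),W(x))\to\bigoplus_a\Hom(V(ta),W(ha))$, which is visibly polynomial in both $V$ and $W$ and makes the bi-weight computation transparent. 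With that correction in place your argument goes through.
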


Denote this common value of the dimensions of the weight spaces by $\alpha \circ \beta$. By the saturation of effective weights (Theorem \ref{saturation}) and the reciprocity property, we have
\[
\alpha \circ \beta \neq 0 \Longleftrightarrow r\alpha \circ s \beta \neq 0, \; \forall r,s \geq 1.
\]

Schofield defined useful semi-invariants which span the weight spaces of semi-invariants, and together with his study of general representations the set $\Sigma(Q,\beta)$ may be described in the following way. We use the notation $\alpha \hookrightarrow \beta$ to mean that every $\beta$-dimensional representation has a subrepresentation of dimension $\alpha$.

\begin{theorem}[\cite{DW00a}, Theorem 3]
	\label{spanning}
	
	Let $Q$ be a quiver and $\beta$ a sincere dimension vector, \ie, $\beta(x) \neq 0$ for all $x \in Q_0$. If $\sigma = \langle \alpha, \cdot \rangle \in \Z^{Q_0}$ is a weight with $\alpha \in \Z^{Q_0}$, then the following statements are equivalent:
	\begin{enumerate}
		\item $\dm \si{\beta}_\sigma \neq 0;$
		\item $\sigma(\beta) = 0$ and $\sigma(\beta') \leq 0$ for every $\beta' \hookrightarrow \beta;$
		\item $\alpha$ is a dimension vector, $\sigma(\beta) = 0$, and $\alpha \hookrightarrow \alpha + \beta$.
	\end{enumerate}
\end{theorem}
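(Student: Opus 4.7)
The plan is to prove the three-way equivalence by establishing the cycle $(1)\Rightarrow(2)\Rightarrow(1)$ together with the separate equivalence $(1)\Leftrightarrow(3)$, combining King's numerical criterion with the Derksen--Weyman spanning theorem for Schofield semi-invariants and Schofield's general subrepresentation criterion. The identity $\sigma(\beta)=0$ in each clause is already forced by the one-parameter torus argument given just before King's criterion, so I would take it as known throughout.

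For $(1)\Rightarrow(2)$, pick a nonzero $f\in\si{\beta}_\sigma$ and a representation $V\in\Rep$ with $f(V)\neq 0$. Then $V$ is $\sigma$-semistable, so by King's criterion every subrepresentation $V'$ satisfies $\sigma(\dmm V')\leq 0$. Given $\beta'\hookrightarrow\beta$, by definition every $\beta$-dimensional representation admits a subrepresentation of dimension $\beta'$, so in particular our $V$ does, yielding $\sigma(\beta')\leq 0$. Conversely, for $(2)\Rightarrow(1)$, the dimension vectors of subrepresentations of a generic $V\in\Rep$ are \emph{exactly} the $\beta'$ with $\beta'\hookrightarrow\beta$, directly from the definition; hypothesis (2) then gives $\sigma(\beta')\leq 0$ for each such $\beta'$, so King's criterion certifies this generic $V$ as $\sigma$-semistable. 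Hence $\si{\beta}_{r\sigma}\neq 0$ for some $r\geq 1$, and Theorem \ref{saturation} upgrades this to $\si{\beta}_\sigma\neq 0$.

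For $(1)\Leftrightarrow(3)$, I would invoke the spanning theorem of Derksen--Weyman: the space $\si{\beta}_{\langle\alpha,\cdot\rangle}$ is spanned by Schofield semi-invariants $c(V,\cdot)$ with $\dmm V=\alpha$ and $\langle\alpha,\beta\rangle=0$. Nontriviality of any such $c(V,\cdot)$ already forces $\alpha\in\Z_{\geq 0}^{Q_0}$, since it is the dimension vector of an honest representation $V$. The defining property of $c(V,\cdot)$ is that $c(V,W)\neq 0$ iff $\Hom_Q(V,W)=0$, and under $\langle\alpha,\beta\rangle=0$ the Euler identity $\dim\Hom_Q(V,W)-\dim\Ext^1_Q(V,W)=\langle\alpha,\beta\rangle$ makes this equivalent to $\Ext^1_Q(V,W)=0$. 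Schofield's general subrepresentation criterion then identifies the existence of such a pair $(V,W)$ with the relation $\alpha\hookrightarrow\alpha+\beta$. The reverse direction $(3)\Rightarrow(1)$ is correspondingly easy: (3) produces a generic Ext-vanishing pair $(V,W)$, and the associated $c(V,\cdot)$ is a nonzero element of $\si{\beta}_\sigma$.

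The main obstacle, and the genuinely deep ingredient, is the spanning theorem itself: showing that the determinantal Schofield semi-invariants actually exhaust $\si{\beta}_\sigma$ rests on the first fundamental theorem for mixed tensor invariants of the $\gl$-groups applied at each vertex, together with a combinatorial reduction to determinantal form. Without it one can still manufacture effective weights from $\sigma$-semistable representations, but one cannot directly certify the subrepresentation condition in (3) and the bridge between invariant theory and Schofield's general-representation geometry collapses.
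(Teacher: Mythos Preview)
The paper does not prove this theorem at all; it is quoted verbatim from \cite{DW00a} as an imported result, so there is no ``paper's own proof'' to compare against. Your outline is essentially the standard Derksen--Weyman argument, and your identification of the spanning theorem as the deep ingredient is exactly right.

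There is, however, a genuine circularity in your $(2)\Rightarrow(1)$ step. You invoke Theorem~\ref{saturation} to pass from $\si{\beta}_{r\sigma}\neq 0$ to $\si{\beta}_{\sigma}\neq 0$, but in the paper Theorem~\ref{saturation} and Theorem~\ref{spanning} are both cited as ``\cite{DW00a}, Theorem~3''; they are two faces of the \emph{same} result, and in the original source saturation is deduced \emph{from} the equivalence $(1)\Leftrightarrow(2)$ (divide the inequalities $r\sigma(\beta')\leq 0$ by $r$). So you are assuming what you are proving. To close the cycle non-circularly you should instead prove $(2)\Rightarrow(3)$ directly: Schofield's theory gives
\[
\text{ext}(\alpha,\beta)=\max\{\langle\alpha,\beta''\rangle : \beta-\beta''\hookrightarrow\beta\}-\langle\alpha,\beta\rangle \quad\text{(equivalently, } \text{ext}(\alpha,\beta)=0 \iff \langle\alpha,\beta-\beta'\rangle\geq 0 \text{ for all } \beta'\hookrightarrow\beta\text{)},
\]
and since $\sigma(\beta)=0$, condition (2) says precisely $\langle\alpha,\beta-\beta'\rangle=-\sigma(\beta')\geq 0$ for every $\beta'\hookrightarrow\beta$, forcing $\text{ext}(\alpha,\beta)=0$, i.e.\ $\alpha\hookrightarrow\alpha+\beta$. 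That $\alpha$ is a genuine dimension vector needs a separate short argument (e.g.\ testing against $\beta'=\beta-e_x\hookrightarrow\beta$ at sinks, or invoking sincerity of $\beta$ together with the explicit form of $\langle\alpha,\cdot\rangle$). With that repair your sketch becomes a faithful summary of the Derksen--Weyman proof.
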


Some of the necessary and sufficient linear homogeneous inequalities above turn out to be redundant. In order to describe a minimal list among these, we need the following result.

\begin{theorem}[\cite{Sch92}, Theorem 6.1]
\label{Schofield-Schur}
	Let $\beta \in \Z_{\geq 0}^{Q_0}$ be a dimension vector. The following are equivalent:
	\begin{enumerate}
		\item there exists a $\beta$-dimensional representation $V$ such that End$_Q(V,V) \cong \C;$
		\item $\sigma_\beta(\beta') < 0$ for all $\beta' \hookrightarrow \beta$, $\beta' \neq 0,\, \beta$, where $\sigma_\beta = \langle \beta, \cdot \rangle - \langle \cdot, \beta \rangle.$
	\end{enumerate}
	
	Any such $\beta$ is called a \emph{Schur root} and a representation with these properties is a \emph{Schur representation}.
\end{theorem}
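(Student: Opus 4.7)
My plan is to prove the two implications through the Euler--Ringel form $\langle \alpha, \gamma \rangle = \dim \Hom(V, W) - \dim \ext(V, W)$ (valid because the path algebra of $Q$ is hereditary), together with King's numerical criterion. It is useful to rewrite
\[
\sigma_\beta(\beta') = \langle \beta - \beta', \beta' \rangle - \langle \beta', \beta - \beta' \rangle,
\]
which makes clear that the diagonal contributions cancel and that $\sigma_\beta(\beta) = 0$ automatically.

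For $(2) \Rightarrow (1)$, the easier direction, I would argue that a general representation $V$ of dimension $\beta$ is $\sigma_\beta$-stable. The dimension vectors of subrepresentations of such a $V$ are precisely those $\beta'$ with $\beta' \hookrightarrow \beta$, and by hypothesis $\sigma_\beta(\beta') < 0$ for all such $\beta' \neq 0, \beta$; King's numerical criterion then gives $\sigma_\beta$-stability of $V$. The full subcategory of $\sigma_\beta$-semistable representations is abelian with simple objects exactly the $\sigma_\beta$-stable representations, so Schur's lemma applied in this $\C$-linear, finite-length abelian category (over the algebraically closed field $\C$) forces $\End(V) \cong \C$.

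For $(1) \Rightarrow (2)$, fix $\beta' \hookrightarrow \beta$ with $\beta' \neq 0, \beta$. Because the Schur condition $\dim \End(V) \leq 1$ is Zariski-open and admitting a subrepresentation of dimension $\beta'$ is generic, I can choose a Schur $V$ together with a \emph{general} subrepresentation $V'$ of dimension $\beta'$, producing a short exact sequence $0 \to V' \to V \to V'' \to 0$ for which $\ext(V', V'') = 0$ by Schofield's general subrepresentation theorem. Applying $\Hom(V, -)$: the map $\End(V) \to \Hom(V, V'')$ sends $\id_V \mapsto \pi \neq 0$ and, since $\End(V) = \C \cdot \id_V$, must therefore be injective, so $\Hom(V, V') = 0$. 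Symmetrically, $\Hom(V'', V) = 0$.

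The main obstacle is to convert these vanishings into the strict inequality $\sigma_\beta(\beta') < 0$. Inserting $\Hom(V, V') = 0$, $\Hom(V'', V) = 0$, and $\ext(V', V'') = 0$ into the long exact sequences obtained by applying $\Hom(V', -)$ and $\Hom(-, V')$ to the defining extension, a short alternating-sum calculation collapses to
\[
\sigma_\beta(\beta') = -\dim \ext(V'', V') - \dim \Hom(V', V'').
\]
Since $V$ is Schur, hence indecomposable, the extension class $[V] \in \ext(V'', V')$ is nonzero, giving $\dim \ext(V'', V') \geq 1$ and thus $\sigma_\beta(\beta') < 0$, as required.
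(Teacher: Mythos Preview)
The paper does not give its own proof of this theorem: it is quoted verbatim as Theorem~6.1 of \cite{Sch92} and used as background, so there is no in-paper argument to compare your proposal against.

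That said, your proof is essentially correct and follows the standard line of Schofield's original argument. For $(2)\Rightarrow(1)$ your use of King's criterion is clean; the only implicit fact you are using is that for a \emph{general} $V\in\Rep$, the dimension vectors of subrepresentations of $V$ are exactly the $\beta'$ with $\beta'\hookrightarrow\beta$ (this holds because ``having a $\beta'$-subrepresentation'' is a closed condition, so if it fails generically it fails on a proper closed set, and there are only finitely many $\beta'\leq\beta$). For $(1)\Rightarrow(2)$ the one point you glide over is the simultaneity: you need a single $V$ that is both Schur \emph{and} admits a $\beta'$-subrepresentation $V'$ with $\ext(V',V/V')=0$. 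This is justified by the irreducibility of the incidence variety $Z=\{(V,V'):V'\subset V,\ \dmm\,V'=\beta'\}$ (a vector bundle over a product of Grassmannians), on which the Schur locus and the $\ext(V',V'')=0$ locus are both nonempty open, hence meet. After that, your long-exact-sequence bookkeeping is correct: from $\Hom(V'',V)=0$ one also gets $\Hom(V'',V')=0$ (any $V''\to V'$ composed with the inclusion lands in $\Hom(V'',V)$), which is the missing term needed to reach $\sigma_\beta(\beta')=-\dim\ext(V'',V')-\dim\Hom(V',V'')$, and indecomposability of $V$ forces the extension class to be nonzero.
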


With this theorem, we can describe the effective weights by semi-stability conditions. In particular, we will be able to associate the dimension vectors of the subrepresentations with subsets of $\{1, \ldots, n\}$ because of the following theorem.

\begin{theorem}[\cite{DW11}, Corollary 5.2]
\label{facet-decomp}
	Let $Q$ be a quiver without oriented cycles and $N$ vertices,  and $\beta$ a Schur root. Then 
	\begin{enumerate}
	\item $\dm C(Q, \beta) = N-1$, and 
	\item $\sigma \in C(Q,\beta)$ if and only if $\sigma(\beta) = 0$ and $\sigma(\beta_1) \leq 0$ for every decomposition $\beta = c_1 \beta_1 + c_2 \beta_2$ with $\beta_1, \beta_2$ Schur roots, $\beta_1 \circ \beta_2 = 1$, and $c_i = 1$ whenever $\langle \beta_i, \beta_i \rangle < 0$. 
	\end{enumerate}
\end{theorem}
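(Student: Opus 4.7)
The plan is to use Theorem \ref{spanning} to realize $C(Q,\beta)$ as the intersection of the hyperplane $\{\sigma(\beta)=0\}$ with the half-spaces $\{\sigma(\beta')\leq 0\}$ for every $\beta' \hookrightarrow \beta$, and then prune this (generally highly redundant) list to an irredundant one indexed exactly by the decompositions in the statement.

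For part (1), since $C(Q,\beta)$ lies in the hyperplane $\sigma(\beta)=0$ of $\R^{Q_0}$, we immediately obtain $\dm C(Q,\beta) \leq N-1$. For the reverse inequality, I would exhibit a point in the relative interior. The canonical weight $\sigma_\beta = \langle\beta,\cdot\rangle - \langle\cdot,\beta\rangle$ is a natural candidate: it is antisymmetric in $\beta$, so $\sigma_\beta(\beta)=0$, and since $\beta$ is a Schur root Theorem \ref{Schofield-Schur} gives $\sigma_\beta(\beta') < 0$ for every nonzero proper $\beta' \hookrightarrow \beta$. Thus $\sigma_\beta$ lies in the relative interior of $C(Q,\beta)$ within the hyperplane, so sufficiently small perturbations within $\{\sigma(\beta)=0\}$ remain inside $C(Q,\beta)$, yielding $N-1$ linearly independent directions.

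For part (2), the proof has a ``$\Leftarrow$'' direction (every prescribed inequality actually holds on $C(Q,\beta)$) and a ``$\Rightarrow$'' direction (this list of inequalities suffices to cut out $C(Q,\beta)$ and is irredundant). The ``$\Leftarrow$'' direction is an immediate consequence of Theorem \ref{spanning} since $\beta_1 \hookrightarrow \beta$ whenever $\beta = c_1\beta_1 + c_2\beta_2$ with $\beta_1$ a Schur root and $\beta_1 \circ \beta_2 \neq 0$ (the latter by Lemma \ref{reciprocity}, which identifies $\beta_1 \circ \beta_2$ with the dimension of a space of Schofield semi-invariants vanishing on complements of $\beta_1$-subrepresentations). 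The harder ``$\Rightarrow$'' direction is carried out by analyzing a generic $\sigma$-semistable representation $V$ of dimension $\beta$ for $\sigma$ in the relative interior of a facet $F$. The Jordan--H\"older filtration in the abelian subcategory of $\sigma$-semistable representations writes $V = V_1^{c_1}\oplus V_2^{c_2}$ with each $V_i$ a $\sigma$-stable Schur representation of dimension $\beta_i$, so that $\beta = c_1\beta_1 + c_2\beta_2$; the multiplicity $c_i$ is forced to be $1$ precisely when $\langle\beta_i,\beta_i\rangle < 0$, for then $\beta_i$ is a real Schur root with a rigid (zero-dimensional) moduli space of $\sigma$-stables.

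The main obstacle is the codimension count that identifies the condition $\beta_1 \circ \beta_2 = 1$ with $F$ being a facet rather than a higher-codimension face. The plan is to compute $\dm F$ using the reciprocity pairing: by Lemma \ref{reciprocity} and the correspondence between facets and decompositions, $\beta_1 \circ \beta_2$ controls the number of independent linear conditions that the decomposition imposes on weights $\sigma \in C(Q,\beta)$, with $\beta_1 \circ \beta_2 = 1$ corresponding to exactly one such condition and hence $\dm F = N-2$; if $\beta_1\circ\beta_2 > 1$ the locus is overdetermined and has codimension at least two. Combining with the $\sigma_\beta$-perturbation argument from part (1), one then shows both that every facet arises from a decomposition of the prescribed form and that each such decomposition does produce a facet, completing the proof.
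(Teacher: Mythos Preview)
The paper does not give its own proof of this statement: it is quoted verbatim as Corollary~5.2 of \cite{DW11} and used as a black box. So there is no in-paper argument to compare your proposal against.

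That said, your outline has a concrete error worth flagging. In your explanation of why $c_i=1$ is forced when $\langle\beta_i,\beta_i\rangle<0$, you write that ``$\beta_i$ is a real Schur root with a rigid (zero-dimensional) moduli space of $\sigma$-stables.'' This is backwards: real Schur roots are exactly those with $\langle\beta_i,\beta_i\rangle=1$, not $<0$. The condition $\langle\beta_i,\beta_i\rangle<0$ means $\beta_i$ is a (strictly) imaginary Schur root, and the reason $c_i=1$ is forced is the opposite of rigidity: for such $\beta_i$ every positive multiple $c\beta_i$ is again a Schur root, so a general representation of dimension $c_i\beta_i$ is itself indecomposable rather than a direct sum of $c_i$ copies. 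Hence in the generic $\sigma$-stable decomposition one sees a single Schur summand of dimension $c_i\beta_i$, not $c_i$ copies of a $\beta_i$-dimensional one, which forces $c_i=1$ in the listed decomposition.

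Two further points are underdeveloped. First, you assert that on a facet the Jordan--H\"older filtration of a generic $V$ has exactly two distinct stable dimension vectors; this is the heart of the matter and needs an argument (in \cite{DW11} it comes from their analysis of the $\sigma$-stable decomposition and the fact that each additional distinct stable factor drops the dimension of the face by one). Second, your claim that ``$\beta_1\circ\beta_2$ controls the number of independent linear conditions'' is not right as stated: $\beta_1\circ\beta_2$ is the dimension of a space of semi-invariants, not a count of hyperplanes, and the passage from $\beta_1\circ\beta_2=1$ to ``codimension exactly one'' goes through the identification of the face $C(Q,\beta_1)\cap C(Q,\beta_2)$ with $\mathbb{H}(\beta_1)\cap C(Q,\beta)$ (as in the paper's Theorem~\ref{facets}), not through counting equations. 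Your part~(1) argument via $\sigma_\beta$ and Theorem~\ref{Schofield-Schur} is fine.
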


This then allows us to describe the cone of effective weights as 
\[
\cone = \{\sigma \in \HH(\beta) \mid \sigma(\beta') \leq 0 \text{ for all } \beta' \hookrightarrow \beta\},
\]
where $\HH(\beta) = \{ \sigma \in \RR^{Q_0} \mid \sigma(\beta) = 0\}.$

\begin{remark} 
\label{minimal-list}
While we could replace $\beta_1 \circ \beta_2 =1$ with $\beta_1 \circ \beta_2 \neq 0$ in Theorem \hyperref[facet-decomp]{\ref{facet-decomp}(2)}, this would give a longer list of inequalities. The condition $\beta_1 \circ \beta_2 =1$ ensures a complete and irredundant list of necessary and sufficient inequalities.
\end{remark}

A more precise description of the facets of the cone $C(Q,\beta)$ is given by the following.

\begin{definition}
	For a dimension vector $\beta$, define $W_2(Q,\beta)$ as the set of all ordered pairs $(\beta_1, \beta_2)$ such that:
	\begin{enumerate}
		\item $\beta = c_1 \beta_1 + c_2 \beta_2$ for some integers $c_1, c_2 \geq 1$;
		\item $\beta_1$ and $\beta_2$ are Schur roots;
		\item $s_1 \beta_1 \circ s_2 \beta_2 =1$ for all $s_1, s_2 \geq 1$;
		\item $c_i=1$ whenever $\langle \beta_i, \beta_i \rangle < 0$.
	\end{enumerate}
\end{definition}

\begin{theorem}[\cite{DW11}, Theorem 5.1]
\label{facets}
	Let $Q$ be a quiver without oriented cycles and $\beta$ a Schur root. Let $\mathcal{F}(Q,\beta)$ denote the set of all facets of $C(Q,\beta)$. Then the map
	\[
	W_2(Q,\beta) \to \mathcal{F}(Q,\beta)
	\]
	defined by 
	\[
	(\beta_1, \beta_2) \in W_2(Q,\beta) \mapsto C(Q,\beta_1) \bigcap C(Q,\beta_2) = \mathbb{H}(\beta_1) \bigcap C(Q,\beta)
	\]
	is a bijection. Thus, a minimal list of linear homogeneous inequalities defining the cone $C(Q,\beta)$ is obtained by $\sigma(\beta) = 0$ and $\sigma(\beta_1) \leq 0$ for all $(\beta_1, \beta_2) \in W_2(Q,\beta)$.
\end{theorem}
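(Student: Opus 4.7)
The plan is to verify the bijection by checking three things in turn: (a) well-definedness, i.e., that each pair in $W_2(Q,\beta)$ maps to a genuine facet of $C(Q,\beta)$; (b) injectivity; and (c) surjectivity. Theorem \ref{facet-decomp} already gives a complete inequality description: $\dim C(Q,\beta) = N-1$ and the cone is cut out by $\sigma(\beta) = 0$ together with the inequalities $\sigma(\beta_1) \leq 0$ indexed by decompositions $\beta = c_1\beta_1 + c_2\beta_2$ with $\beta_1,\beta_2$ Schur, $\beta_1 \circ \beta_2 = 1$, and $c_i = 1$ whenever $\langle \beta_i,\beta_i\rangle < 0$. So the task essentially reduces to identifying $\mathbb{H}(\beta_1) \cap C(Q,\beta)$ with $C(Q,\beta_1) \cap C(Q,\beta_2)$, showing this intersection has dimension $N-2$, and verifying that the strengthened condition in the definition of $W_2(Q,\beta)$ singles out a canonical representative of each facet.

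For the set equality, the inclusion $C(Q,\beta_1) \cap C(Q,\beta_2) \subseteq \mathbb{H}(\beta_1) \cap C(Q,\beta)$ is straightforward: if $\sigma$ is effective for both $\beta_i$, then multiplying semi-invariants on a direct sum $V_1^{c_1} \oplus V_2^{c_2}$ produces a semi-invariant for $\beta$, while the relation $\sigma(\beta) = c_1\sigma(\beta_1) + c_2\sigma(\beta_2) = 0$ combined with $\sigma(\beta_1) = 0$ places $\sigma$ in $\mathbb{H}(\beta_1)$. The reverse inclusion is the heart of the argument. Starting from $\sigma \in C(Q,\beta)$ with $\sigma(\beta_1) = 0$, I would exhibit semi-invariants of weight $n\sigma$ for each $\beta_i$ via Schofield's construction: the condition $\beta_1 \circ \beta_2 = 1$, together with the Reciprocity Property (Lemma \ref{reciprocity}), pins down a one-dimensional space of Schofield semi-invariants through which every semi-invariant of weight $n\sigma$ for $\beta$ vanishing on $\mathbb{H}(\beta_1)$ factors into semi-invariants for $\beta_1$ and $\beta_2$ separately. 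Saturation (Theorem \ref{saturation}) then yields $\sigma \in C(Q,\beta_1) \cap C(Q,\beta_2)$. A dimension count combining Theorem \ref{facet-decomp}(1) applied to each $\beta_i$ with the normalization $c_i = 1$ for imaginary roots concludes that the intersection has dimension exactly $N-2$, so it is a facet.

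Injectivity follows from a normalization argument: two pairs producing the same hyperplane in $\mathbb{H}(\beta)$ would force $\beta_1'$ to lie in the span of $\beta_1, \beta_2$ and on the hyperplane cutting out the facet, but the requirements that $\beta_1,\beta_1'$ be primitive Schur roots (enforced by $c_i = 1$ in the imaginary case and by rigidity of the generic representation in the real case) leave no freedom beyond the swap $(\beta_1,\beta_2) \leftrightarrow (\beta_2,\beta_1)$, which the map distinguishes by which factor's hyperplane cuts the facet. Surjectivity is then immediate from Theorem \ref{facet-decomp}(2): every facet is supported by one of the listed hyperplanes, and saturation upgrades $\beta_1 \circ \beta_2 = 1$ to $s_1\beta_1 \circ s_2\beta_2 = 1$ for all positive $s_1,s_2$, so the representative pair lies in $W_2(Q,\beta)$. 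The main obstacle throughout is the reverse set inclusion above: passing from effectiveness for the composite $\beta$ to separate effectiveness for each $\beta_i$ is not formal, and it is precisely where the refined condition $s_1\beta_1 \circ s_2\beta_2 = 1$ (rather than merely $\neq 0$) earns its keep, ensuring that the lift along Schofield semi-invariants is both available and unique enough to preserve the dimension count.
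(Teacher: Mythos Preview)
This theorem is not proved in the paper under review; it is quoted from \cite{DW11} (their Theorem~5.1) and used as a black box. Consequently there is no ``paper's own proof'' to compare your proposal against. Your outline is a reasonable sketch of the argument in \cite{DW11}, but any assessment of its correctness would have to be made against that source rather than the present paper.
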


\vspace{.2in}

\section{The facets of the cone of effective weights for the sun quiver}\label{section-facets-quivers}

In order to use the results in the previous section to describe the facets of $C(Q,\beta)$, we'll first show that the dimension vector $\beta$ is Schur and determine conditions on the $\beta_1'$s that can appear in the decompositions. 

\begin{lemma}
	\label{Schur-root}
	The dimension vector $\beta$ for the sun quiver is Schur.
\end{lemma}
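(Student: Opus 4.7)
The plan is to exhibit a $\beta$-dimensional representation $V$ with $\End_Q(V,V)\cong\C$ and then invoke Theorem \ref{Schofield-Schur}. I set $V(j,i)=\C^j$ at each vertex, choose the linear maps along each flag $\F(i)$ to be generic (injective when $\F(i)$ points toward the central vertex, surjective otherwise), and choose each central arrow map $A_j : \C^n \to \C^n$ to be a generic invertible matrix.

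For any $\phi \in \End_Q(V,V)$, write $M_i = \phi(n,i)$. Compatibility with the flag $\F(i)$ forces $M_i$ to preserve a complete flag $F_i$ in $\C^n$---either the chain of images of the successive flag inclusions if $\F(i)$ points toward the central vertex, or the chain of kernels of the successive projections if $\F(i)$ points away. For generic flag-map choices, $F_i$ is a generic complete flag. Moreover, the injectivity (respectively surjectivity) of these flag maps propagates $M_i = \lambda I_n$ back down the flag to yield $\phi(j,i) = \lambda I_j$ at every $(j,i)$, so the problem reduces to showing that $M_1$ is scalar.

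Compatibility with the central arrows reads $M_{h\alpha_j} A_j = A_j M_{t\alpha_j}$, so every $M_i$ is determined from $M_1$ by successive conjugation by appropriate $A_j^{\pm 1}$, and traversing the $2k$-cycle once yields the consistency relation $M_1 = B M_1 B^{-1}$ for some word $B = A_{2k}^{\epsilon_{2k}} \cdots A_1^{\epsilon_1}$ in the central arrow maps. For generic $A_j$ the matrix $B$ has $n$ distinct eigenvalues, and its centralizer in $\Mat_n(\C)$ is the polynomial algebra $\C[B]$, which, after diagonalizing $B = PDP^{-1}$, takes the form $\{PEP^{-1} : E~\text{diagonal}\}$. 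Thus $M_1 = PEP^{-1}$ for some diagonal $E$, and the condition that $M_1$ preserve $F_1$ transports to the condition that $E$ preserve the flag $P^{-1}F_1$. Since the $A_j$ were chosen independently of the flag maps, $P^{-1}F_1$ is a generic complete flag; its $1$-dimensional component is a line in general position with respect to the coordinate axes, and the only diagonal matrices preserving such a line are the scalar ones. This forces $E = \lambda I_n$, hence $M_1 = \lambda I_n$ as required.

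The principal bookkeeping obstacle is the alternation of orientations---both the direction of each central arrow (which determines the sign $\epsilon_j$ in the word $B$) and the direction of each flag (which determines whether $F_i$ is built from images or from kernels) switch with the parity of $i$. Once a consistent labeling is fixed, however, the genericity arguments above are standard: each step cuts out a proper closed subvariety from an open dense locus of admissible choices, so the intersection remains nonempty and produces a Schur representation of dimension $\beta$.
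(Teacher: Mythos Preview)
Your argument is correct, but it takes a genuinely different route from the paper's. The paper invokes Kac's criterion: since $\beta$ is indivisible (its coordinates include $\beta(1,i)=1$) and lies in the fundamental region (one checks $\tau_x(\beta)=\langle e_x,\beta\rangle+\langle\beta,e_x\rangle\le 0$ at every vertex $x$, which amounts to $2\beta(x)\le\sum_{y\sim x}\beta(y)$ and is immediate from $\beta(j,i)=j$), a theorem of Kac guarantees that $\beta$ is a Schur root. This is a two-line verification once you know the result being cited.

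Your approach instead builds a Schur representation by hand: generic injective/surjective flag maps force each central endomorphism $M_i$ to preserve a complete flag $F_i$; generic invertible central-arrow maps conjugate all the $M_i$ to one another and, after one trip around the cycle, force $M_1$ to commute with a generic $B\in\gl_n$; and then the transversality of the eigenbasis of $B$ to the flag $F_1$ pins $M_1$ down to a scalar. Each genericity step cuts out a proper closed subvariety, so the argument is sound. The advantage of your method is that it is entirely self-contained and elementary---it uses nothing beyond linear algebra and basic dimension counting---whereas the paper's proof is much shorter but relies on Kac's structural theorem about the fundamental region, which is a substantial black box. Your construction also has the pleasant side effect of making the Schur representation explicit, which could be useful if one later needed to compute with it.
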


\begin{proof}
	The dimension vector $\beta$ is indivisible, meaning the greatest common divisor of its coordinates is one. By a result of Kac
	(\cite{Kac82}, Theorem B(d)),
to show $\beta$ is Schur, it suffices to show that $\beta$ is in the fundamental region of the graph, meaning that the support of $\beta$ is a connected graph and $\tau_i(\beta) \leq 0$ for all vertices $i \in Q_0$, where $e_i$ denotes the dimension vector of the simple representation at vertex $i$ and  $\tau_i(\cdot) := \langle e_i, \cdot \rangle + \langle \cdot, e_i \rangle$. This is immediately checked to hold for all $n \geq 1$.
\end{proof}

\begin{corollary}
\label{dim-cones}
	For the sun quiver $Q$, $\dm C(Q,\beta) = mn-1$.
\end{corollary}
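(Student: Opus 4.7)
The plan is to invoke Theorem \ref{facet-decomp}(1) directly, so the task reduces to verifying its hypotheses for our sun quiver $(Q,\beta)$ and counting vertices. First I would note that the sun quiver has no oriented cycles: by construction the $m = 2k$ arrows around the central polygon alternate in direction (this is well-defined precisely because $m$ is even), and the flags attached at each central vertex $(n,i)$ are equioriented $A_n$-quivers whose orientation is compatible with the adjacent central arrows. Hence every arrow of $Q$ either points ``into'' an even-indexed central vertex or ``out of'' an odd-indexed one, so no closed oriented walk can exist.

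Next, the dimension vector $\beta$ is a Schur root by Lemma \ref{Schur-root}. Thus both hypotheses of Theorem \ref{facet-decomp} apply, yielding $\dim C(Q,\beta) = N - 1$, where $N = |Q_0|$ denotes the number of vertices.

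Finally I count the vertices. Each flag $\mathscr{F}(i)$, $1 \leq i \leq m$, contributes exactly $n$ vertices $(1,i), (2,i), \ldots, (n,i)$, the last of which is the central vertex attached to the polygon. These vertex sets are disjoint across distinct flags since each flag carries its own central vertex (the central polygon itself contributes no additional vertices beyond the $(n,i)$). Therefore $N = mn$, and we conclude $\dim C(Q,\beta) = mn - 1$.

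There is no substantive obstacle here; the corollary is a bookkeeping consequence of Lemma \ref{Schur-root} combined with Theorem \ref{facet-decomp}(1). The only point worth being careful about is confirming acyclicity of $Q$ and the correct vertex count, both of which follow immediately from the construction in Section \ref{section-saturation}.
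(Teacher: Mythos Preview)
Your proof is correct and follows exactly the same route as the paper's own proof, which simply says the result ``immediately follows from Lemma \ref{Schur-root} and Theorem \ref{facet-decomp}.'' You have merely made explicit the verification of acyclicity and the vertex count $|Q_0| = mn$ that the paper leaves implicit.
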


\begin{proof}
This immediately follows from Lemma \ref{Schur-root} and Theorem \ref{facet-decomp}.
\end{proof}

Now consider the following dimension vectors $\beta_1$ for the sun quiver, where $e_{(j,i)}$ denotes the dimension vector of the simple representation at vertex $(j,i)$:
\begin{enumerate}
\item $\beta_1 = e_{(j,i)}$ for a flag $i$ going out of the central vertex, or $\beta_1 = \beta - e_{(j,i)}$ for a flag $i$ going into a central vertex;
\item $\beta_1 \neq \beta$, $\beta_1 \circ (\beta - \beta_1) = 1$, and $\beta_1$ is weakly increasing with jumps of at most one along each of the $m$ flags.
\end{enumerate}

Denote the set of such $\beta_1$ by $\mathcal{D}$. We show in the next lemma that each $\beta_1 \in \mathcal{D}$ defines a facet of $\cone$, which is called a \textit{regular facet} if $\beta_1$ is in the form described in (2), while a facet defined by some $\beta_1$ as described in (1) is called \textit{trivial}. The interpretation of the inequalities arising from the $\beta_1 \in \mathcal{D}$  is given in Remark \ref{facets-interpretation}.

\begin{lemma}
\label{facets-2}
	For the sun quiver $Q$, the regular facets of $C(Q,\beta)$  are of the form
	\[
	\mathbb{H}(\beta_1) \bigcap C(Q,\beta),
	\]
	where $\beta_1$ is weakly increasing with jumps of at most one along the flags, $\beta_1 \neq \beta$, and $\beta_1 \circ (\beta - \beta_1) = 1$.
\end{lemma}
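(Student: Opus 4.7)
The plan is to invoke Theorem \ref{facets} directly: since $\beta$ is a Schur root by Lemma \ref{Schur-root}, the facets of $C(Q,\beta)$ are in bijection with pairs $(\beta_1,\beta_2)\in W_2(Q,\beta)$, with $(\beta_1,\beta_2)$ giving the facet $\mathbb{H}(\beta_1)\cap C(Q,\beta)$. So it suffices to characterize those pairs in $W_2(Q,\beta)$ whose $\beta_1$ is not of the trivial form (1), and identify them with the combinatorial description in the statement.

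The first step is to extract from the decomposition $\beta=c_1\beta_1+c_2\beta_2$ that $c_1=c_2=1$ in the regular case. The key input is that $\beta(1,i)=1$ at every outer flag vertex, so $c_1\beta_1(1,i)+c_2\beta_2(1,i)=1$ in nonnegative integers with $c_1,c_2\geq 1$ forces, for each $i$, that one of $c_r=1$ with $\beta_r(1,i)=1$ and the other $\beta_s(1,i)=0$. I would then propagate this along each flag using that $\beta_1\hookrightarrow\beta$ is forced by $\beta_1\circ\beta_2=1$ via Theorem \ref{spanning}, and show that if $c_1\geq 2$, then $\beta_1$ must be supported entirely off the outer flag vertices, and in fact the Schurness of $\beta_1$ together with $\beta_1\circ\beta_2=1$ pins $\beta_1$ to be a simple $e_{(j,i)}$ (or its complement), which is precisely case (1) of $\mathcal{D}$. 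Hence for a regular pair, $c_1=c_2=1$ and $\beta_1+\beta_2=\beta$.

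Next, given $\beta_1+\beta_2=\beta$ and $\beta_1\circ\beta_2=1$, Theorem \ref{spanning}(3) says that a generic $\beta$-dimensional representation has a generic subrepresentation of dimension $\beta_1$, with the corresponding quotient of dimension $\beta_2$ also generic. Restricting to each flag $\F(i)$ of the sun quiver, the generic linear maps along the flag are either uniformly injective or uniformly surjective depending on whether dimensions increase or decrease in the direction of the arrows. In either case, the existence of a generic subrepresentation of dimension $\beta_1$ along the flag translates into $\beta_1(j-1,i)\leq\beta_1(j,i)$ for $2\leq j\leq n$, while the symmetric condition that $\beta_2=\beta-\beta_1$ is the dimension vector of the generic quotient (again computed on each flag using that $\beta(j,i)-\beta(j-1,i)=1$) yields $\beta_1(j,i)-\beta_1(j-1,i)\leq 1$. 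Together these say $\beta_1$ is weakly increasing with jumps of at most one along each flag, which is exactly condition (2) in $\mathcal{D}$.

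For the converse, given $\beta_1$ of this combinatorial form with $\beta_1\circ(\beta-\beta_1)=1$ and $\beta_1\neq 0,\beta$, I would check that $(\beta_1,\beta-\beta_1)\in W_2(Q,\beta)$. The Schurness of both $\beta_1$ and $\beta_2$ should follow from Kac's criterion applied as in Lemma \ref{Schur-root} (both vectors are indivisible after trimming, connected on their support, and sit in the fundamental region by the jump condition), while condition (4) is trivial since $c_1=c_2=1$. The main obstacle I expect is verifying condition (3), namely $s_1\beta_1\circ s_2\beta_2=1$ for \emph{all} $s_1,s_2\geq 1$, rather than just for $(1,1)$; I would attack this by combining the reciprocity property (Lemma \ref{reciprocity}) with the saturation theorem (Theorem \ref{saturation}) to reduce the statement to a dimension count on a single well-chosen weight space, and then use the Littlewood-Richardson interpretation of Lemma \ref{sun-2} to pin the dimension at $1$.
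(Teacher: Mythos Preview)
Your plan to invoke Theorem~\ref{facets} and then characterize the admissible $\beta_1$ is the right framework, and it matches the paper's approach. However, the core of your argument has a real gap.

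In your third paragraph you assert that for \emph{either} orientation of a flag, the existence of a generic subrepresentation of dimension $\beta_1$ forces $\beta_1(j-1,i)\le\beta_1(j,i)$, and symmetrically that the quotient condition forces the jump bound. This is not correct. For a flag going \emph{out} of a central vertex, the arrow $a:(j,i)\to(j-1,i)$ is generically surjective with one-dimensional kernel; the subrepresentation condition $V(a)(V'(j,i))\subseteq V'(j-1,i)$ then yields only $\beta_1(j,i)-1\le\beta_1(j-1,i)$ (jumps at most one), and says nothing about $\beta_1(j-1,i)\le\beta_1(j,i)$. Indeed, for $n=2$ the vector with $\beta_1(1,i)=1$, $\beta_1(2,i)=0$ embeds into any representation along an outgoing flag. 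The quotient heuristic does not rescue this: you have not established that the quotient is itself general, and even granting a suitable duality one only recovers the constraints already known. The paper fills this gap by a separate contradiction argument: assuming $\beta_1'(l+1)<\beta_1'(l)$ on an outgoing flag, one uses that $\beta_1'$ is $-\langle\cdot,\beta_2'\rangle$-semi-stable (from $\beta_1'\circ\beta_2'\neq 0$ and reciprocity) to force $\beta_1'(l)=\beta_1'(l-1)+1$, hence $c_1=1$ and $e_l\hookrightarrow\beta_1'$; then the Schurness of $\beta_1$ via Theorem~\ref{Schofield-Schur} yields $\sigma_{\beta_1}(e_l)<0$ and $\sigma_{\beta_1}(\beta_1-e_l)<0$ simultaneously, a contradiction. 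A dual argument handles jumps along incoming flags. You are missing this step entirely.

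Two smaller points. First, your argument for $c_1=c_2=1$ is placed before the monotonicity analysis and is only sketched; the paper instead deduces $c_1=c_2=1$ \emph{after} establishing that $c_1\beta_1$ has jumps of at most one along every flag, which immediately forces $c_1=1$ (and likewise $c_2=1$). Your route---arguing that $c_1\ge 2$ forces $\beta_1$ to be a simple---would require substantially more work, and your appeal to ``$\beta_1\hookrightarrow\beta$'' should really be ``$c_1\beta_1\hookrightarrow\beta$'' at that stage. Second, the lemma is a one-directional statement (regular facets have the stated form); your proposed converse is not needed here, and in any case your plan for it would not go through: the candidate $\beta_1,\beta_2$ are typically not in the fundamental region, so Kac's criterion as in Lemma~\ref{Schur-root} does not apply to them.
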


\begin{proof}
	The proof is the same as the one given in \cite{Chi08}, Lemma 5.2, which we include here for completeness. By Theorem \ref{facets}, a facet $\mathcal{F}$ is of the form  
	\[
	\mathbb{H}(\beta_1) \bigcap C(Q,\beta),
	\]
	where $\beta_1, \beta_2$ are Schur roots, $\beta_1 \circ \beta_2 = 1$, and $\beta = c_1\beta_1 + c_2\beta_2$ for some $c_1, c_2 \geq 1$.
	
	Suppose that $\beta_1$ is not of the form as in (1). We'll show that $\beta_1$ is weakly increasing with jumps of at most one along the flags. Denote $c_1\beta_1 = \beta_1'$ and $c_2\beta_2 = \beta_2'$. Because it is clear that $s_1 \beta_1 \circ s_2 \beta_2 \geq \beta_1 \circ \beta_2$ for all $s_1,s_2 \geq 1$, $\beta_1' \circ \beta_2' \neq 0$. By Theorem \ref{spanning}, it follows that any representation of dimension vector $\beta$ has a subrepresentation of dimension vector $\beta_1'$. Choose a $\beta-$dimensional representation which is injective along the flags going into a central vertex and surjective along the flags going out of the central vertex. Then $\beta_1'$ is weakly increasing along the flags going in and has jumps of at most one (from the end of the flag towards the center vertex) along the flags going out, or else the maps couldn't be surjective.
	
	We'll show that $\beta_1'$ is weakly increasing along each flag $\mathcal{F}(i)$ going out of a central vertex. Suppose to the contrary that $\beta_1'(l+1) - \beta_1'(l) < 0$ for some $l \in \{1, \ldots, n-1\}$. Then $\beta_1' - e_l \hookrightarrow \beta'_1$. Moreover, $\beta_1' \circ \beta_2' \neq 0$ is equivalent to $\beta_1'$ being $-\langle \cdot, \beta_2' \rangle-$semi-stable by reciprocity (Theorem \ref{reciprocity}). Thus, $\langle \beta_1' - e_l, \beta_2' \rangle \geq 0$, so $\beta'_2(l) \leq \beta_2'(l-1)$, implying $\beta_1'(l) \geq 1 + \beta_1'(l-1)$. As we previously showed that $\beta_1'$ has jumps of at most one along such a flag, we must have $\beta_1'(l) = 1 + \beta_1'(l-1)$. Thus, $c_1 = 1$ and $e_l \hookrightarrow \beta_1'$. We then have that $\beta_1' = \beta_1$ is a Schur root by assumption, hence is $\sigma_{\beta_1'}$-semistable  by Theorem \ref{Schofield-Schur}, and $e_l, \, \beta_1' - e_l \hookrightarrow \beta_1'$, with $\beta_1' \neq e_l$. Therefore, by the same theorem, $\sigma_{\beta'_1}(e_l)< 0$ and $\sigma_{\beta'_1}(\beta_1'-e_l)< 0,$ which is a contradiction. Thus, $\beta_1'$ must be weakly increasing along the flags going out of a central vertex. By a similar argument, $\beta_1'$ will have jumps of at most one along each flag going in.
	
	Finally, we'll show that $c_1=c_2=1$. Because $\beta_1' = c_1\beta_1$ has jumps of at most one along each flag, we have $0 \leq c_1(\beta_1(l+1,i) - \beta_1(l,i)) \leq 1$ for all $l \in \{1, \ldots, n-1\}$ and $i \in \{1, \ldots, m\}.$ If there are no $l,i$ such that $\beta_1(l+1,i) - \beta_1(l,i) \neq 0$, then $c_1 = 1$, while otherwise there must exist an $i$ such that $\beta_1'(l,i) =1,$ so $c_1=1$. Similarly, $c_2=1$. 
	
	Thus, $\beta = \beta_1 + \beta_2$ with $\beta_1$ weakly increasing of jumps of at most one along the flags.
\end{proof}

\begin{lemma}
\label{inequalities}
Let $\sigma \in \HH(\beta)$ for the sun quiver. Then $\sigma \in \cone$ if and only if the following are true:
	\begin{enumerate}
		\item $(-1)^i \sigma(e_{(j,i)}) \geq 0$ for all $1 \leq j \leq n-1, \, 1 \leq i \leq m$;
		\item $\sigma(\beta_1) \leq 0$ for every $\beta_1 \neq \beta$ weakly increasing with jumps of at most one along the flags and $\beta_1 \circ (\beta - \beta_1) = 1$.
	\end{enumerate}
\end{lemma}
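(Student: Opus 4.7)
My plan is to combine the subrepresentation description of $\cone$ from Theorem \ref{spanning} (for necessity of the inequalities) with the facet classification of Theorem \ref{facets} together with Lemma \ref{facets-2} (for sufficiency).

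For the forward direction I would use Theorem \ref{spanning}: if $\sigma \in \cone$, then $\sigma(\beta') \leq 0$ for every $\beta' \hookrightarrow \beta$. For the regular $\beta_1$ appearing in (2), the hypothesis $\beta_1 \circ (\beta - \beta_1) = 1$ together with Theorem \ref{spanning} forces $\beta_1 \hookrightarrow \beta$, and hence $\sigma(\beta_1) \leq 0$. For (1), I would exhibit explicit subrepresentations: when $i$ is odd and $1 \leq j \leq n-1$, a generic $\beta$-dimensional representation has the outgoing flag map $V(j,i) \to V(j-1,i)$ (or no outgoing arrow if $j=1$) with nontrivial kernel, so $e_{(j,i)} \hookrightarrow \beta$, giving $\sigma(e_{(j,i)}) \leq 0$, i.e., $(-1)^i \sigma(e_{(j,i)}) \geq 0$. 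When $i$ is even, the dual construction shows $\beta - e_{(j,i)} \hookrightarrow \beta$, and combining $\sigma(\beta - e_{(j,i)}) \leq 0$ with $\sigma(\beta) = 0$ yields $(-1)^i \sigma(e_{(j,i)}) = \sigma(e_{(j,i)}) \geq 0$.

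For the reverse direction, $\beta$ is a Schur root by Lemma \ref{Schur-root}, so Theorem \ref{facets} reduces checking $\sigma \in \cone$ to verifying $\sigma(\beta_1) \leq 0$ for every $(\beta_1, \beta_2) \in W_2(Q, \beta)$. The proof of Lemma \ref{facets-2} shows that any such $\beta_1$, possibly after swapping with $\beta - \beta_1$, is either regular (weakly increasing with jumps of at most one along each flag, covered by (2)) or of the trivial form listed in (1) of $\mathcal{D}$, namely $\beta_1 = e_{(j,i)}$ for $i$ odd or $\beta_1 = \beta - e_{(j,i)}$ for $i$ even with $1 \leq j \leq n-1$. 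In the trivial case the inequality $\sigma(\beta_1) \leq 0$ translates to condition (1) exactly as in the forward direction.

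The main obstacle is making the trivial-case dichotomy precise: the argument in the proof of Lemma \ref{facets-2} begins by supposing that $\beta_1$ is not of the trivial form, so to invoke it cleanly one must first rule out other conceivable trivial $\beta_1$'s, in particular a simple $e_{(n,i)}$ at a central vertex or its complement. I would handle this by checking that for such $\beta_1$ either $\beta - \beta_1$ fails Kac's criterion for being a Schur root or the condition $\beta_1 \circ (\beta - \beta_1) = 1$ fails, using Euler-form computations analogous to those already appearing in Lemma \ref{Schur-root} and Lemma \ref{facets-2}. Once this classification is clean, both implications of the lemma follow by assembling the pieces above.
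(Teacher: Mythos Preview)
Your approach is essentially the same as the paper's: the paper also derives the forward implication from Theorem \ref{spanning} (observing that the inequalities $\sigma(\beta_1) \leq 0$ for $\beta_1 \in \mathcal{D}$ are exactly conditions (1) and (2)) and the reverse implication from the facet classification in Lemma \ref{facets-2}. Your extra care in justifying $\beta_1 \hookrightarrow \beta$ explicitly and in flagging the central-vertex edge case is more detailed than the paper's terse proof, but it does not constitute a different route.
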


\begin{proof}
	The description of the regular facets in Lemma \ref{facets-2} proves one direction, while if $\sigma \in \cone$, then $\sigma(\beta_1) \leq 0$ for every $\beta_1 \in \mathcal{D}$ by Theorem \ref{spanning}, which  is equivalent to (1) and (2).
\end{proof}

\begin{remark}
\label{facets-interpretation}
	Let $\sigma_1$ be the weight we defined for the sun quiver in equation (\ref{weight-sigma_1}). In particular,
	\[
	\sigma_1(e_{(j,i)}) = (-1)^i(\lambda(i)_j - \lambda(i)_{j+1}), \quad  \, \,  1 \leq i \leq m, \,  1 \leq j \leq n-1.
	\]
	The inequalities arising from a trivial facet of $C(Q,\beta)$, as described in (1) in the preceding lemma, called the \textit{chamber inequalities}, simply state that the sequences $\lambda(i)$ are weakly decreasing sequences of real numbers. Because we will always assume this, we exclude these $\beta_1$ from our considerations. The inequalities arising from (2) in the lemma are called the \textit{regular inequalities}, and the corresponding facet is regular. 
\end{remark}

\vspace{.2in}

\section{Horn-type inequalities}\label{section-Horn}

In this section let $\beta_1$ be a dimension vector which is weakly increasing with jumps of at most one along each of the flags towards the central vertices. Define the following jump sets:
\[
I_i = \{l  \mid \beta_1(l,i) > \beta_1(l-1,i), \, 1 \leq l \leq n\}
\]
with the convention $\beta_1(0,i) =0$ for all $i$. Because $\beta_1$ defines a tuple $I=(I_1, \ldots, I_m)$, we'll commonly denote $\beta_1$ by $\beta_I$. Note that $|I_i| = \beta_I(n,i)$ for each $i$.

Conversely, each tuple $I = (I_1, \ldots, I_m)$ of subsets of $\{1, \ldots, n\}$ defines a dimension vector $\beta_I$ because if 
\[
I_i = \{z_1(i) < \cdots < z_r(i)\},
\]
then $\beta_I(j,i) = j-1$ for all $z_{k-1}(i) \leq j < z_k(i)$ for all $1 \leq k \leq r+1$, with the convention that $z_0(i) = 0$ and $z_{r+1}(i) = n+1$ for all $i$. This means that going towards the center vertex on the $i^{th}$ flag, the dimension at a vertex is 0 until the vertex $z_1(i)$, at which it becomes 1 and continues to be 1 until the vertex $z_2(i)$, at which point it becomes $2$, and so on.

\begin{definition}
	For the dimension vector $\beta$ associated to the sun quiver, define $T(n,m)$ to be the set of all tuples $I=(I_1, \ldots, I_m)$ such that $\beta_I \neq \beta$ (equivalently, $|I_i|<n$ for some $i$), and $\beta_I \circ (\beta - \beta_I) = 1$. 
\end{definition}

We give a description of the set $T(n,m)$ without reference to the sun quiver and only in terms of partitions in Lemma \ref{sun-Horn}.

\begin{proposition}
\label{Horn-type-inequalities}
	Suppose $\lambda(1),\ldots, \lambda(m)$ are weakly decreasing sequences of $n$ real numbers. For the sun quiver $Q$ and dimension vector $\beta$, $\sigma_1 \in C(Q,\beta)$ if and only if $\sum_{i \text{ even}} |\lambda(i)| = \sum_{i \text{ odd}} |\lambda(i)| $ and 
		\[
	\sum_{j \in I_i} \sum_{i \text{ even}} \lambda(i)_j \leq \sum_{j \in I_i} \sum_{i \text{ odd}} \lambda(i)_j
		\]
		for every tuple $(I_1, \ldots, I_m) \in T(n,m)$.
\end{proposition}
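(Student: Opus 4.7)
The plan is to invoke Lemma \ref{inequalities} and translate each of its conditions into a statement about the sequences $\lambda(i)$ via the explicit formula (\ref{weight-sigma_1}) defining $\sigma_1$. By construction, the chamber inequalities $(-1)^i\sigma_1(e_{(j,i)}) = \lambda(i)_j - \lambda(i)_{j+1} \geq 0$ are automatically satisfied because each $\lambda(i)$ is weakly decreasing; this is the reason the proposition's hypothesis allows us to ignore the trivial facets. Thus the only substantive content is to match the regular inequalities $\sigma_1(\beta_I) \leq 0$ with the Horn-type inequalities indexed by $T(n,m)$, and to match $\sigma_1(\beta) = 0$ with the balance condition $\sum_{i \text{ even}}|\lambda(i)| = \sum_{i \text{ odd}}|\lambda(i)|$.

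The key step is an Abel-style summation by parts. For any dimension vector $\beta_I$ that is weakly increasing with jumps of at most one along each flag, I would compute
\[
\sigma_1(\beta_I) = \sum_{i=1}^{m}\Bigl[\sum_{j=1}^{n-1}(-1)^i\bigl(\lambda(i)_j - \lambda(i)_{j+1}\bigr)\beta_I(j,i) + (-1)^i \lambda(i)_n \beta_I(n,i)\Bigr]
\]
and, using $\beta_I(0,i) = 0$, reorganize the inner sum to
\[
\sum_{j=1}^{n} \lambda(i)_j \bigl(\beta_I(j,i) - \beta_I(j-1,i)\bigr).
\]
Since $\beta_I$ has jumps of at most one along each flag, the difference $\beta_I(j,i)-\beta_I(j-1,i)$ equals $1$ precisely when $j \in I_i$ and $0$ otherwise. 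This collapses the expression to
\[
\sigma_1(\beta_I) = \sum_{i \text{ even}}\sum_{j \in I_i}\lambda(i)_j \;-\; \sum_{i \text{ odd}}\sum_{j \in I_i}\lambda(i)_j.
\]

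Specializing to $\beta_I = \beta$ (which corresponds to $I_i = \{1,\dots,n\}$ for all $i$) gives the balance condition, so $\sigma_1 \in \HH(\beta)$ if and only if $\sum_{i \text{ even}}|\lambda(i)| = \sum_{i \text{ odd}}|\lambda(i)|$. For arbitrary $\beta_I$ with $\beta_I \neq \beta$ and $\beta_I \circ (\beta-\beta_I) = 1$, the inequality $\sigma_1(\beta_I) \leq 0$ is, by the computation above, exactly the claimed Horn-type inequality for the tuple $I = (I_1,\dots,I_m) \in T(n,m)$. Conversely, every $I \in T(n,m)$ arises in this way from a unique such $\beta_I$, by the dictionary between weakly-increasing jump-one dimension vectors along the flags and tuples of subsets of $\{1,\dots,n\}$ described before the statement. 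Combining these identifications with Lemma \ref{inequalities} yields the proposition.

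There is no serious obstacle; the argument is essentially a bookkeeping computation. The only place one must be careful is in matching $\sigma_1(\beta_I) \leq 0$ with an inequality of the claimed shape in both directions of the ``if and only if'': this requires noting that the set $T(n,m)$ and the set of regular $\beta_1 \in \mathcal{D}$ from Lemma \ref{facets-2} are in natural bijection via $\beta_I \leftrightarrow (I_1,\dots,I_m)$, so that no regular facet is missed and no extraneous inequalities are introduced.
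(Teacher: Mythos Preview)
Your proposal is correct and follows essentially the same approach as the paper: both invoke Lemma \ref{inequalities}, compute $\sigma_1(\beta_I)$ directly from the definition of $\sigma_1$, and observe that $\lambda(i)_j$ contributes precisely when $j \in I_i$ (i.e., when $\beta_I(j,i) \neq \beta_I(j-1,i)$). Your Abel summation by parts is simply a more explicit rendering of the step the paper summarizes in one sentence.
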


\begin{proof}
We prove the statement by using  the definition of the set $T(n,m)$, Theorem \ref{facets}, and Lemma \ref{inequalities}. Calculating directly from the definition of the weight $\sigma_1$ in equation (\ref{weight-sigma_1}) of Section \ref{section-saturation}, 
	\[
	\sigma_1(\beta) = \sum_{i \text{ odd}} \sum_{j=1}^{n-1}[(\lambda(i)_{j+1} - \lambda(i)_j) \beta(j,i)] 
	+ \sum_{i \text{ even}} \sum_{j=1}^{n-1}[(\lambda(i)_j - \lambda(i)_{j+1}) \beta(j,i)]
	+ \sum_{i=1}^m (-1)^i \lambda(i)_n \beta(n,i) 
	\]
	Substituting $\beta(j,i)=j$, $\sigma_1(\beta) = 0$ precisely when the equality  holds. Replacing $\beta$ with $\beta_I$, we get a similar expression, and after noting that $\lambda(i)_j$ contributes to the inequality exactly when $\beta_I(j,i) \neq \beta_I(j-1,i)$, meaning $j \in I_i$, we obtain the inequality.  Because the $\beta_I$ are precisely those described in Lemma \ref{inequalities}, this proves the equivalence. 
\end{proof}

We now want to better understand the set $T(n,m)$. Specifically, we would like to describe the set without any intrinsic reference to $\beta_I$ but rather in terms of partitions. For any tuple $(I_1, \ldots, I_m)$ of subsets of $\{1,\ldots,n\}$, define the following decreasing sequences of integers, where we identify $I_0$ and $I_m$:
\[
\unlam(I_i) = \begin{cases}
\lambda'(I_i) & i \text{ even} \\
\lambda'(I_i) -((|I_i| - |I_{i-1}| - |I_{i+1}|)^{n-|I_i|}) & i \text{ odd}.
\end{cases}
\]

\begin{lemma}
\label{sun-Horn} 
The set $T(n,m)$ for the sun quiver consists of all tuples $I = (I_1, \ldots, I_m)$ such that: 
\begin{enumerate}
	\item[(a)] at least one of the subsets $I_1, \ldots, I_m$ has cardinality $< n$;
	\item[(b)] $\unlam(I_i)$ is a partition for all $1 \leq i \leq m$;
	\item[(c)] $f(\unlam(I_1), \ldots, \unlam(I_m)) = 1$.
\end{enumerate}
\end{lemma}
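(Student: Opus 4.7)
The plan is to identify the two defining conditions of $T(n,m)$ (namely $\beta_I\ne\beta$ and $\beta_I\circ(\beta-\beta_I)=1$) with the three partition-theoretic conditions (a)--(c). Condition (a) is immediate from $\beta_I(n,i)=|I_i|$ and $\beta(n,i)=n$. The core of the lemma is to establish the identity
\[
\beta_I\circ(\beta-\beta_I)\;=\;f(\unlam(I_1),\ldots,\unlam(I_m))
\]
whenever every $\unlam(I_i)$ is a partition, and $\beta_I\circ(\beta-\beta_I)=0$ otherwise; once this is in place, $\beta_I\circ(\beta-\beta_I)=1$ is equivalent to (b) and (c) taken together.

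To prove the displayed identity I would invoke the reciprocity property (Lemma \ref{reciprocity}) to rewrite $\beta_I\circ(\beta-\beta_I)=\dm\si{\beta-\beta_I}_{\langle\beta_I,\cdot\rangle}$, and then re-run the Cauchy-rule weight-space decomposition from the proof of Lemma \ref{sun-2}, now with dimension vector $\gamma:=\beta-\beta_I$ (so $\gamma(j,i)=j-\beta_I(j,i)$) and weight $\sigma:=\langle\beta_I,\cdot\rangle$. A direct Euler-form computation gives $\sigma(j,i)=(-1)^i$ at the jump positions of $\beta_I$ in the interior of flag $i$ and $0$ at the other interior positions, while at central vertices
\[
\sigma(n,i)=\begin{cases}|I_i|-|I_{i-1}|-|I_{i+1}| & i\text{ odd,}\\ \chi(n\in I_i) & i\text{ even,}\end{cases}
\]
reflecting that the central arrows point \emph{into} the odd central vertices and \emph{out of} the even ones. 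Feeding Cauchy's rule into each flag arrow and taking $\spl$-invariants at the interior flag vertices exactly as in Lemma \ref{sun-2}, one builds up at vertex $(n,i)$ a partition by successively adjoining columns of height $\gamma(j,i)=j-|\{l\in I_i:l\le j\}|$ at the jump positions; a short check against the definition $\lambda(I_i)=(z_r-r,\ldots,z_1-1)$ identifies this partition with $\lambda'(I_i)$ for $i$ odd, and with either $\lambda'(I_i)$ (if $n\notin I_i$) or $\lambda'(I_i)$ minus its leftmost column of height $n-|I_i|$ (if $n\in I_i$) for $i$ even.

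At each central vertex one then combines the flag partition with the two symmetric-power factors coming from the central arrows and extracts the $\spl$-invariants of $\gl$-weight $\sigma(n,i)$; this is the familiar trick of tensoring by $\det^{-\sigma(n,i)}$ used in the proof of Lemma \ref{sun-2}, and it shifts the flag partition by the rectangle $(\sigma(n,i)^{n-|I_i|})$, subtracting for $i$ odd and adding for $i$ even. A short case analysis (two cases per parity, distinguishing the sign of $\sigma(n,i)$ for odd $i$ and whether $n\in I_i$ for even $i$) shows that the resulting partition is exactly $\unlam(I_i)$ when the latter is a valid partition, and is otherwise inadmissible; the central-vertex contribution thus equals $c^{\unlam(I_i)}_{\alpha_{i-1},\alpha_i}$ under (b) and vanishes otherwise. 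Multiplying across the $m$ central vertices and summing over the arrow partitions $\alpha_1,\ldots,\alpha_m$ yields the displayed identity. The main obstacle is precisely this bookkeeping: one must track signs, arrow orientations, and the range of jump indices to verify in every boundary case (notably $n\in I_i$ for even $i$ and $|I_i|<|I_{i-1}|+|I_{i+1}|$ for odd $i$) that the determinant twist really does reproduce the partition $\unlam(I_i)$ defined just before the lemma.
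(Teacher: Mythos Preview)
Your proposal is correct and follows essentially the same route as the paper's proof: both arguments use reciprocity to rewrite $\beta_I\circ(\beta-\beta_I)$ as $\dim\si{\beta-\beta_I}_{\langle\beta_I,\cdot\rangle}$, compute the weight $\sigma_I=\langle\beta_I,\cdot\rangle$ explicitly at interior and central vertices (your formulas agree with the paper's equations for $\sigma_I(l,i)$ and $\sigma_I(n,i)$), and then re-run the Cauchy-rule calculation of Lemma~\ref{sun-2} with dimension vector $\beta_2=\beta-\beta_I$ to identify the resulting partitions with the $\unlam(I_i)$. The paper's bookkeeping is slightly more compressed---it writes $\gamma(i)=\gamma_{n-1}(i)+(((-1)^i\sigma_I(n,i))^{\beta_2(n,i)})$ and then checks $\gamma(i)=\unlam(I_i)$ directly from $\beta_2(z_j(i),i)=z_j(i)-j$---whereas you split the even-$i$ case according to whether $n\in I_i$, but the substance is identical.
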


\begin{proof} Denote the weight $\langle \beta_1, \cdot \rangle$ by $\sigma_I$. Describing $\beta_1$ by $\beta_I$ as previously and letting $e(j,i)$ be the dimension vector of the simple representation with support at vertex $j$ on the $i^{th}$ flag, the contribution to $\sigma_I(\alpha)$ at a vertex $l \in \{1, \ldots, n-1\}$ on a flag $i$ is
\[
\langle \beta_I, e(l,i) \rangle = \beta_I(l)  - \beta_I(l-1)
\]
if $i$ is even with $\beta_I(0) = 0$, and 
\[
\langle \beta_I, e(l,i) \rangle = \beta_I(l)  - \beta_I(l+1) 
\]
if $i$ is odd. Since $\beta_I$ is weakly increasing with jumps of at most one along the flags, this translates to 
\begin{equation}
	\label{eq:weight-even}
\sigma_I(l,i) = \left\{\begin{array}{lr} 
1 & \text{ if }l \in I_i \\
0 & \text{otherwise}
\end{array} \right.
\end{equation}
for $l \in \{1, \ldots, n-1\}$ and $i$ even, and 
\begin{equation}
	\label{eq:weight-odd}
\sigma_I(l,i) = \left\{ \begin{array}{lr}
-1 & \text{ if } l+1 \in I_i \\
0 & \text{otherwise}
\end{array} \right.
\end{equation}
for $l \in \{1, \ldots, n-1\}$ and $i$ odd. We then only need to describe the weight at the central vertices. We have
\[
\sigma_1(n,i) = \begin{cases}
\beta_I(n,i) - \beta_I(n-1,i) & i \text{ even} \\
\beta_I(n,i) - \beta_I(n,i-1) - \beta_I(n,i+1) & i \text{ odd},
\end{cases}
\]
where $\beta_I(n,0) = \beta_I(n,m)$. We use the fact that $\beta_I(n,i) = |I_i|$ to finish defining the weight $\sigma_I$ in terms of the subsets $I_i$, where we identify $I_0$ as $I_m$ below:

\begin{equation}
\sigma_I(n,i) = \left\{\begin{array}{lr}
0 &  i  \text{ even}, \; n \not \in I_i, \\
1 & i  \text{ even}, \; n \in I_i,\\
|I_i| - |I_{i-1}| - |I_{i+1}| & i \text{ odd}.
\end{array}
\right.
\end{equation}

Using this explicit description of this weight $\sigma_I$, we can calculate the corresponding partitions in determining $\dm \si{\beta}_{\sigma_I}$. The calculation is done precisely the same as before (see Lemma  \ref{sun-2} for the details). The partitions we want to consider for describing the contribution of the first $n-1$ vertices along each flag are 
\[
\gamma_{n-1}(i) = (\beta_2(n-1,i)^{(-1)^i\sigma_I(n-1,i)}, \ldots, \beta_2(1,i)^{(-1)^i \sigma_I(1,i)})', \qquad 1 \leq i \leq m.
\]
Recalling the contributions of the central vertices to the space of semi-invariants, we have that
\[
\gamma(i) = \gamma_{n-1}(i) + (((-1)^i \sigma_I(n,i))^{\beta_2(n,i)}), 
\]
are the partitions we want for $1 \leq i \leq m$, and 
\[
\dm \ssi(Q,\beta_2)_{\sigma_1} = f(\gamma(1),\ldots,\gamma(m)).
\]

Since $\beta_2 = \beta - \beta_1$, if $I_i = \{z_1(i) < \ldots < z_r(i)\}$, then 
\[
\beta_2(z_j(i),i) = z_j(i) - j = z_j(i) - 1 - (j-1) = \beta_2(z_j(i)-1,i),
\]
and in particular, $\beta_2(n,i) = n - \beta_1(n,i) = n - |I_i|$. With this, $\unlam(I_i) = \gamma(i)$ for each $i$.

Thus, if $I = (I_1, \ldots, I_m) \in T(n,m)$, then condition (a) is satisfied by definition, (b) is satisfied since $\gamma(i)$ is a partition for each $i$ and $\gamma(i) = \unlam(I_i)$, and (c) is true because
\[
1 = \beta_I \circ (\beta - \beta_I) = f(\gamma(1), \ldots, \gamma(m)) = f(\unlam(I_1), \ldots, \unlam(I_m)).
\]
Conversely, if conditions (a)$ - $(c) are satisfied by a tuple $I= (I_1, \ldots, I_m)$ of subsets of $\{1,\ldots, n\}$, then we can construct a dimension vector $\beta_I$ in the usual way and the associated decreasing sequences of integers $\unlam(I_i), \, 1 \leq i \leq m$. Necessarily, $\beta_I \neq \beta$ and 
\[
\beta_I \circ (\beta - \beta_I) = f(\unlam(I_1), \ldots, \unlam(I_m)) = 1
\]
Thus, $I \in T(n,m)$.
\end{proof}

\begin{proposition}
\label{conditions-weight-cone}
Let $\lambda(1), \ldots, \lambda(m)$ be weakly decreasing sequences of $n$ reals, $m \geq 4$ and even. The following are equivalent for the sun quiver $Q$:
	\begin{enumerate}
		\item $\sigma \in \cone$;
		\item the numbers $\lambda(i)_j$ satisfy 
		\[
		\sum_{i \text{ even}} |\lambda(i)| = \sum_{i \text{ odd}} |\lambda(i)|\]
		 and 
	\[
		\sum_{j \in I_i} \sum_{i \text{ even}} \lambda(i)_j \leq \sum_{j \in I_i} \sum_{i \text{ odd}} \lambda(i)_j
	\]
		for every tuple $(I_1, \ldots, I_m)$ for which $|I_i| < n$ for some $i$, the $\unlam(I_i)$ are partitions, $1 \leq i \leq m$, and 
		\[
		f(\unlam(I_1), \ldots, \unlam (I_m)) \neq 0;
		\]
		\item the numbers $\lambda(i)_j$ satisfy the same conditions as in part (2), and it suffices to check only the tuples $(I_1,\ldots, I_m)$ which satisfy those conditions and
		\[
		f(\unlam(I_1), \ldots, \unlam(I_m)) = 1.
		\]
	\end{enumerate}
\end{proposition}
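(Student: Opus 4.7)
My plan is to prove the three-way equivalence via the cycle $(1) \Rightarrow (2) \Rightarrow (3) \Rightarrow (1)$. The implication $(2) \Rightarrow (3)$ is immediate, since (3) only demands a subset of the inequalities required by (2): those indexed by tuples with $f(\unlam(I_1),\ldots,\unlam(I_m)) = 1$ rather than merely $\neq 0$.

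For $(3) \Rightarrow (1)$, I combine Proposition \ref{Horn-type-inequalities} with Lemma \ref{sun-Horn}. Proposition \ref{Horn-type-inequalities} states that $\sigma_1 \in \cone$ is equivalent to the equality $\sum_{i \text{ even}} |\lambda(i)| = \sum_{i \text{ odd}} |\lambda(i)|$ together with the Horn-type inequalities indexed by $T(n,m)$, and Lemma \ref{sun-Horn} identifies $T(n,m)$ intrinsically as the set of tuples $(I_1,\ldots,I_m)$ with some $|I_i|<n$, with every $\unlam(I_i)$ a partition, and with $f(\unlam(I_1),\ldots,\unlam(I_m))=1$. Thus the tuples quantified over in (3) contain $T(n,m)$; the only extra case admitted by (3), namely $I_i = \{1,\ldots,n\}$ for every $i$, contributes only the inequality $\sum_{i \text{ even}}|\lambda(i)| \leq \sum_{i \text{ odd}}|\lambda(i)|$, which is already forced by the stated equality. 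Hence (3) supplies exactly what Proposition \ref{Horn-type-inequalities} requires.

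The content of the proposition lies in $(1) \Rightarrow (2)$. Given $\sigma_1 \in \cone$ and any tuple $(I_1,\ldots,I_m)$ with each $\unlam(I_i)$ a partition and $f(\unlam(I_1),\ldots,\unlam(I_m)) \neq 0$, I form the weakly-increasing, unit-jump dimension vector $\beta_I$ with jump sets $I_i$, exactly as in Section \ref{section-facets-quivers}. The computation performed inside the proof of Lemma \ref{sun-Horn} identifies the circle product $\beta_I \circ (\beta - \beta_I)$ with $f(\unlam(I_1),\ldots,\unlam(I_m))$, so this quantity is nonzero. Exactly as in the proof of Lemma \ref{facets-2}, Theorem \ref{spanning} then forces $\beta_I \hookrightarrow \beta$, and applying Theorem \ref{spanning} once more to $\sigma_1 \in \cone$ yields $\sigma_1(\beta_I) \leq 0$. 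Substituting the definition (\ref{weight-sigma_1}) of $\sigma_1$ into $\sigma_1(\beta_I)$ — the same expansion carried out in the proof of Proposition \ref{Horn-type-inequalities} — rewrites this bound as the desired inequality on the $\lambda(i)_j$, and the equality condition is simply $\sigma_1(\beta) = 0$, which holds since $\sigma_1 \in \HH(\beta)$.

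I do not anticipate a real obstacle here: the three ingredients needed — the dictionary between $\beta_I \circ (\beta - \beta_I)$ and $f(\unlam(I_1),\ldots,\unlam(I_m))$, the implication that nonvanishing of the circle product forces a subrepresentation, and the combinatorial translation of $\sigma_1(\beta_I) \leq 0$ into the Horn-type sum — have each been set up in earlier results. The only conceptual point worth emphasizing is that (3) records precisely the irredundant facet inequalities (those with circle product equal to $1$, per Theorem \ref{facets}), whereas (2) enlarges to the a priori redundant full list indexed by all $\beta_I \hookrightarrow \beta$; the equivalence of (2) and (3) is thus the expected manifestation of the fact that facets with circle product exceeding one are redundant.
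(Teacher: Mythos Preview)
Your proof is correct and follows essentially the same route as the paper, which simply cites Proposition~\ref{Horn-type-inequalities}, Lemma~\ref{sun-Horn}, and Remark~\ref{minimal-list}. Your cycle $(1)\Rightarrow(2)\Rightarrow(3)\Rightarrow(1)$ unpacks what those citations mean: the paper's logic is that Proposition~\ref{Horn-type-inequalities} together with Lemma~\ref{sun-Horn} give $(1)\Leftrightarrow(3)$ directly, and Remark~\ref{minimal-list} (that replacing $\beta_1\circ\beta_2=1$ by $\neq 0$ in Theorem~\ref{facet-decomp} yields an equivalent, longer list) handles $(2)\Leftrightarrow(3)$. Your argument for $(1)\Rightarrow(2)$ via $\beta_I\circ(\beta-\beta_I)\neq 0 \Rightarrow \beta_I\hookrightarrow\beta$ and then Theorem~\ref{spanning} is exactly what underlies Remark~\ref{minimal-list}, just made explicit rather than invoked.

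One small correction: in your $(3)\Rightarrow(1)$ step you say the tuples in (3) contain $T(n,m)$ with the sole extra case $I_i=\{1,\ldots,n\}$ for all $i$. In fact condition~(3) inherits from~(2) the requirement that some $|I_i|<n$, so the tuples in~(3) are \emph{exactly} $T(n,m)$ by Lemma~\ref{sun-Horn}; there is no extra case to dispose of. This misreading is harmless to your argument.
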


\begin{proof}
This follows from Proposition \ref{Horn-type-inequalities}, Lemma \ref{sun-Horn}, and Remark \ref{minimal-list}.
\end{proof}

We can deduce minor conditions on the sizes of the $I_i$.

\begin{lemma}
	Let $I = (I_1,\ldots, I_m)$ be a tuple of subsets of $\{1, \ldots, n\}$ and define $s_i$ to be the smallest $k \in \{0, \ldots, |I_i|\}$ such that $n - k \notin I_i$. Then
	\[
	\max \{|I_{i-1}|, |I_{i+1}| \} \leq |I_i| \leq |I_{i-1}| + |I_{i+1}| + s_i
	\]
	if $I \in T(n,m)$ and $i$ is odd.
\end{lemma}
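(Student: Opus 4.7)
The plan is to prove the upper and lower bounds separately, leveraging the three characterizing properties of $T(n,m)$ established in Lemma \ref{sun-Horn}: (a) some $|I_j|<n$, (b) each $\unlam(I_j)$ is a partition, and (c) $f(\unlam(I_1),\ldots,\unlam(I_m)) = 1$. The upper bound will come from (b) applied at the odd index $i$; the lower bound will come from (c) via the subrepresentation interpretation.

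For the upper bound, I would observe that
\[
\unlam(I_i) = \lambda'(I_i) - \bigl((|I_i|-|I_{i-1}|-|I_{i+1}|)^{n-|I_i|}\bigr),
\]
viewed as a length-$(n-|I_i|)$ sequence obtained by padding $\lambda'(I_i)$ with zeros and subtracting the same constant from every entry. Since subtracting a constant preserves weak decrease, the partition condition collapses to non-negativity of the last entry. The key combinatorial step is then the identification $\lambda'(I_i)_{n-|I_i|} = s_i$: writing $I_i = \{z_1 < \cdots < z_r\}$ with $r = |I_i|$, the part $\lambda(I_i)_j = z_{r-j+1} - (r-j+1)$ attains the maximal value $n-r$ exactly when $z_{r-j+1} = n-j+1$, and by monotonicity of the $z$'s this forces $z_{r-j+1}, \ldots, z_r$ to be the consecutive integers $n-j+1, \ldots, n$; the number of such $j$ is precisely $s_i$ by definition. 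Consequently
\[
\unlam(I_i)_{n-|I_i|} = s_i - \bigl(|I_i| - |I_{i-1}| - |I_{i+1}|\bigr) \ge 0,
\]
which rearranges into the desired upper bound.

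For the lower bound, I would exploit (c): since $f(\unlam(I_1),\ldots,\unlam(I_m)) = 1 \neq 0$, we have $\beta_I \circ (\beta - \beta_I) \neq 0$, which by Theorem \ref{spanning} is equivalent to $\beta_I \hookrightarrow \beta$. Then I would pick any $V \in \Rep$ in which every central arrow $V(a)$ is an isomorphism; such a $V$ exists because each central vertex carries dimension $n$, so invertibility of these endomorphisms is a nonempty open condition. Applying $\beta_I \hookrightarrow \beta$ to this specific $V$ produces a subrepresentation $V'$ of dimension $\beta_I$. For any central arrow $a \colon (n,j) \to (n,j')$, injectivity of $V(a)$ together with the subrepresentation condition $V(a)(V'(n,j)) \subseteq V'(n,j')$ gives $|I_j| = \dim V'(n,j) \le \dim V'(n,j') = |I_{j'}|$. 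Because every central arrow of the sun quiver points from an even-indexed central vertex to an odd one, the two central arrows adjacent to $(n,i)$ with $i$ odd both point into $(n,i)$ from $(n,i\pm 1)$, yielding $|I_{i-1}|, |I_{i+1}| \le |I_i|$, as required.

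The main obstacle I anticipate is the combinatorial identification $\lambda'(I_i)_{n-|I_i|} = s_i$, which requires careful bookkeeping with the conjugate partition; once this is in place, both inequalities follow cleanly from tools already developed in the paper (the partition condition of Lemma \ref{sun-Horn}(b) for the upper bound, and the subrepresentation criterion of Theorem \ref{spanning} applied to a generic representation for the lower bound).
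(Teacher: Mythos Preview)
Your proposal is correct and follows essentially the same approach as the paper: the lower bound via $\beta_I \hookrightarrow \beta$ applied to a representation with invertible central arrows, and the upper bound via the non-negativity of the last part of $\unlam(I_i)$ together with the identification of that last part as $s_i$. Your treatment of the identity $\lambda'(I_i)_{n-|I_i|} = s_i$ is in fact more explicit than the paper's, which simply asserts that $s_i$ is the smallest part of $\lambda'(I_i)$.
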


\begin{proof}
	If $\beta_1 \circ \beta_2 \neq 0$, then any representation $V$ of dimension $\beta_1 + \beta_2$ has a subrepresentation of dimension $\beta_1$. Choosing $V$ such that $V(a)$ is invertible for every arrow $a$ between central vertices, we immediately have 
	\[
	\text{max} \{|I_{i-1}|, |I_{i+1}| \} \leq |I_i|
	\]
	for the stated $i$. Let $\langle \beta_1, \cdot \rangle = \sigma_1$. In order for $\dm \ssi (Q,\beta_2)_{\sigma_1}$ to be nonzero, each $\unlam(I_i)$ must be a partition, meaning, in particular, that it has non-negative parts. Note that $s_i$ is precisely the smallest part of $\lambda'(I_i)$ for each $i$. We have $\unlam(I_i) = \lambda'(I_i) - (\sigma_1(i)^{n-|I_i|})$ for each of the specified $i$, as well as $\sigma_1(i) = |I_i| - |I_{i-1}| - |I_{i+1}|$. Thus, $\unlam(I_i)$ is a partition if and only if 
	\[
	0 \leq |I_{i-1}| + |I_{i+1}| - |I_i| + s_i.
	\]
\end{proof}

We note that the proof above does not extend to determining how the sizes of the other subsets compare or whether some subset contains $n$ or is nonempty. This is because no central vertex in the sun quiver shares two flags, so  we can't cancel the sizes of any two consecutive subsets that appear in $\sigma_1$.

\begin{example}
\label{n=2}
For the case $n=2$ and $m=6$ for the sun quiver, we can compute all decompositions $\beta = \beta_1 + \beta_2$ such that both $\beta_1, \beta_2$ are Schur roots and $\beta_1 \circ \beta_2 = 1$, or equivalently by Theorem \ref{facets}, a description of all facets of $\cone$ in this case. This is done by first computing all tuples $(\unlam(I_1),\ldots, \unlam(I_6))$ such that $f(\unlam(I_1),\ldots, \unlam(I_6))=1$ and such that the other conditions of Proposition \ref{conditions-weight-cone} are satisfied. Next, we compute all tuples $(I_1,\ldots, I_6)$ arising from these and omit any such that the corresponding $\beta_1$ and $\beta_2$ are not Schur as these will produce redundant inequalities. The inequalities are immediately determined from these tuples $(I_1,\ldots, I_6)$. We list  the corresponding dimension vectors $\beta_1$ in the \hyperref[appendix]{Appendix}.

The corresponding inequalities of the partitions arising from these decompositions then provide a complete and minimal list of linear homogeneous inequalities defining when a weight $\sigma$ is in $\cone$ for the sun quiver $Q$ in this case. Specifically, $\sigma \in \cone$ if and only if the defining partitions satisfy 
\[
|\lambda(1)| + |\lambda(3)| + |\lambda(5)| = |\lambda(2)| + |\lambda(4)| + |\lambda(6)|,
\]
and 
\[
\begin{array}{ll}
\lambda(2)_1 \leq \lam(1)_1 + \lam(3)_1 & 
 \lam(2)_2 \leq \lam(1)_1 + \lam(3)_2\\\\
 \lam(2)_1 + \lam(4)_2 \leq \lam(1)_1 + \lam(3)_1  + \lam(5)_1 & 
 \lam(2)_2 + \lam(4)_2 \leq \lam(1)_1 + \lam(3)_2  + \lam(5)_1\\\\
 \lam(2)_2 + \lam(6)_2 \leq \lam(1)_1 +  \lam(3)_2 + \lam(5)_1  &
 |\lam(2)| \leq |\lam(1)| + |\lam(3)| \\\\
 \lam(2)_1 + \lam(4)_1  \leq \lam(1)_1  +  |\lam(3)| + \lam(5)_1 & 
 \lam(2)_1 + \lam(4)_2  \leq \lam(1)_1 + |\lam(3)| + \lam(5)_2 \\\\
 \lam(2)_2 + \lam(4)_2 \leq \lam(1)_2 + |\lam(3)|  + \lam(5)_2 & 
\lam(2)_1 + \lam(4)_2 + \lam(6)_2 \leq \lam(1)_1 + |\lam(3)|  + \lam(5)_1  \\\\
\lam(2)_1 + \lam(4)_2 + \lam(6)_2 \leq \lam(1)_1 +  \lam(3)_1  +|\lam(5)|  &
 \lam(2)_2 +  \lam(4)_2  + \lam(6)_2  \leq \lam(1)_1 + |\lam(3)|  + \lam(5)_2 \\\\
 |\lam(2)| + \lam(4)_1 + \lam(6)_2 \leq |\lam(1)| + |\lam(3)|  + \lam(5)_1  & 
 |\lam(2)|+ \lam(4)_2 + \lam(6)_2  \leq |\lam(1)| + |\lam(3)|  + \lam(5)_2 ,
\end{array}
\]
along with the inequalities obtained by permutations of the indices that respect the symmetries of the sun quiver. This likewise provides a description of all the $(I_1, \ldots, I_6)$ in $T(2,6)$, as described in Proposition \ref{Horn-type-inequalities}.

\end{example}

\vspace{.2in}

 \section{Generalized eigenvalue problem}\label{Hermitian}

The original motivation in \cite{DW00a} for describing Littlewood-Richardson coefficients in terms of quivers was to provide a solution to a famous conjecture of Horn \cite{Hor62}.

\begin{theorem}[Horn's conjecture]\label{Horn}
Let $\lambda(i) = (\lambda_1(i),\ldots, \lambda_n(i))$, $i \in \{1,2,3\}$, be weakly decreasing sequences of $n$ real numbers. Then the following are equivalent:
\begin{enumerate}
\item there exist $n \times n$ complex Hermitian matrices $H(i)$ with eigenvalues $\lambda(i)$ such that 
\[
H(2) = H(1) + H(3);
\]
\item the numbers $\lambda_j(i)$ satisfy 
\[
|\lambda(2)| = |\lambda(1)| + |\lambda(3)|
\]
together with 
\[
\sum_{j \in I_2} \lambda_j(2) \leq \sum_{j \in I_1} \lambda_j(1) + \sum_{j \in I_3} \lambda_j(3)
\]
for every triple $(I_1,I_2,I_3)$ of subsets of $\{1,\ldots, n\}$ of the same cardinality $r < n$ and $c^{\lambda(I_2)}_{\lambda(I_1),\lambda(I_3)} \neq 0;$
\item if $\lambda_j(i)$ is an integer for each $1 \leq j \leq n, \, i \in \{1,2,3\}$, (1) and (2) are equivalent to $c^{\lambda(2)}_{\lambda(1),\lambda(3)} \neq 0$.
\end{enumerate}
\end{theorem}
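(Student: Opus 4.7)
The plan is to port the quiver-theoretic machinery of the rest of the paper to the triple-flag quiver $Q = T_{n,n,n}$ (three equioriented $A_n$ flags attached to a common central vertex, two oriented inward and one outward) with dimension vector $\beta(j,i) = j$; this is the Derksen--Weyman setup of \cite{DW00a}. An analogue of Lemma \ref{sun-2} shows that for the weight $\sigma$ built from $\lambda(1),\lambda(2),\lambda(3)$ as in (\ref{weight-sigma_1}), one has $\dim \ssi(Q,\beta)_\sigma = c^{\lambda(2)}_{\lambda(1),\lambda(3)}$. The theorem then reduces entirely to the study of $\Sigma(Q,\beta)$ and its real cone $C(Q,\beta)$.

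For the equivalence (2) $\Longleftrightarrow$ (3), assume first that the $\lambda_j(i)$ are integers. Theorem \ref{saturation} gives $c^{\lambda(2)}_{\lambda(1),\lambda(3)} \neq 0$ iff $\sigma \in C(Q,\beta) \cap \Z^{Q_0}$. The facet description of $C(Q,\beta)$ via Theorem \ref{facets} (mimicking Lemmas \ref{facets-2}--\ref{sun-Horn}) puts facet decompositions $(\beta_1,\beta_2) \in W_2(Q,\beta)$ in bijection with triples $(I_1,I_2,I_3)$ of equal-cardinality subsets of $\{1,\ldots,n\}$ with $c^{\lambda(I_2)}_{\lambda(I_1),\lambda(I_3)} = 1$, and translates $\sigma(\beta_1) \leq 0$ into the Horn inequality $\sum_{j \in I_2} \lambda_j(2) \leq \sum_{j \in I_1}\lambda_j(1) + \sum_{j \in I_3} \lambda_j(3)$, while $\sigma(\beta) = 0$ becomes $|\lambda(2)| = |\lambda(1)| + |\lambda(3)|$. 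Relaxing $c = 1$ to $c \neq 0$ gives a redundant but equivalent set of inequalities, which is precisely (2).

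For (1) $\Longrightarrow$ (2) with real eigenvalues, I would use standard Schubert calculus: given $H(2) = H(1) + H(3)$ with spectra $\lambda(i)$ and a triple $(I_1,I_2,I_3)$ with $c^{\lambda(I_2)}_{\lambda(I_1),\lambda(I_3)} \neq 0$, the Schubert classes $[\Omega_{I_i}]$ have nonzero intersection in $H^*(\mathrm{Gr}(r,n))$, so for flags adapted to the eigenbases of the $H(i)$ the corresponding Schubert varieties share a common point $W$. The standard trace inequalities $\trace(P_W H(1)) \geq \sum_{j \in I_1} \lambda_j(1)$, $\trace(P_W H(3)) \geq \sum_{j \in I_3} \lambda_j(3)$, and $\trace(P_W H(2)) \leq \sum_{j \in I_2} \lambda_j(2)$, combined with the trace identity $\trace(P_W H(2)) = \trace(P_W H(1)) + \trace(P_W H(3))$, yield the Horn inequality; taking the full trace of $H(2) = H(1) + H(3)$ gives the equality on totals.

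The main obstacle is (2) $\Longrightarrow$ (1), which is the substance of Klyachko's theorem. I would approach it via the moment map for the $\gl(\beta)$-action on $\Rep$: Kirwan convexity together with Kempf--Ness identifies $C(Q,\beta)$ with the moment cone, i.e., the set of weights realized as moment-map values at $\sigma$-semistable points, and unpacking the moment map at the central vertex converts a $\sigma$-semistable quiver representation into a Hermitian triple satisfying $H(2) = H(1) + H(3)$ with the prescribed spectra. Given (2) with integer data, saturation places $\sigma$ in $C(Q,\beta)$, Kempf--Ness produces the required Hermitian triple, and a density/continuity argument (using that the set in (1) is a closed convex cone containing all positive rational scalings of its integer points) extends the conclusion to real eigenvalues. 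This symplectic-to-quiver dictionary is the non-combinatorial heart of the argument and the chief difficulty; once established, it closes the cycle (1) $\Longrightarrow$ (2) $\Longrightarrow$ (3) $\Longrightarrow$ (1).
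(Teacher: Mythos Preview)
The paper does not prove this theorem. Horn's conjecture is stated in Section~\ref{Hermitian} as background and attributed to the literature: the equivalence of (1) and (2) to Klyachko \cite{Kly98}, and the equivalence with (3) to the saturation theorem of Knutson--Tao \cite{KT99} and Derksen--Weyman \cite{DW00a}. There is no ``paper's own proof'' to compare against; the paper's contribution is the \emph{analogue} of Horn's conjecture for the sun quiver (Theorem~\ref{generalization-Horn}), not a new proof of the classical statement.

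Your outline is essentially a synthesis of those cited sources: the triple-flag quiver and the identification $\dim\ssi(Q,\beta)_\sigma = c^{\lambda(2)}_{\lambda(1),\lambda(3)}$ is exactly the Derksen--Weyman setup, and your (2)$\Leftrightarrow$(3) via saturation plus the facet description is their argument. Your (2)$\Rightarrow$(1) via Kempf--Ness/moment map is the GIT reformulation of Klyachko's theorem, and is indeed the substantive step; the paper invokes precisely this mechanism (Proposition~\ref{matrix-equations}, cited from \cite{Chi08}) for the sun quiver but does not reprove it for the star quiver. One caution on your (1)$\Rightarrow$(2): the trace inequalities as you wrote them give the wrong direction. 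With $\trace(P_W H(1)) \geq \sum_{I_1}\lambda_j(1)$ and $\trace(P_W H(2)) \leq \sum_{I_2}\lambda_j(2)$ you would conclude $\sum_{I_2}\lambda_j(2) \geq \sum_{I_1}\lambda_j(1) + \sum_{I_3}\lambda_j(3)$, which is backwards. The correct argument requires matching the Schubert cell conventions for $H(2)$ to the \emph{dual} flag (or equivalently reversing the eigenvalue ordering for $H(2)$), so that all three trace bounds point the same way; this is standard but worth getting right.
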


Klein \cite{Kle68} noted the equivalence of (3) with short exact sequences of finite abelian $p$-groups, while Klyachko \cite{Kly98} proved the equivalence of (1) and (2) in Horn's conjecture. In the same paper, Klyachko noted the connection between this problem and Littlewood-Richardson coefficients. A key step in the proof is the saturation of Littlewood-Richardson coefficients (Theorem \ref{LR-saturation}), which was first proved combinatorially by Knutson and Tao \cite{KT99} and later by Derksen and Weyman \cite{DW00a} in the context of quivers by using the saturation of weight spaces of semi-invariants.

Klyachko found that the set of solutions in part (1) in the statement of Horn's conjecture forms a rational convex polyhedral cone $K(n,3)$ in $\RR^{3n}$, now known as \textit{Klyachko's cone}. In \cite{KTW04}, Knutson, Tao, and Woodward used honeycombs to describe the facets of Klyachko's cone. They found that $K(n,3)$ consists of triples $(\lambda(1), \lambda(2), \lambda(3))$ satisfying the conditions in part (2), and restricting to the triples such that $c_{\lambda(I_1), \, \lambda(I_3)}^{\lambda(I_2)} =1$ provides a minimal list.

As we've previously proven corresponding statements for parts (2) and (3) for the generalized Littlewood-Richardson coefficient $f$, we now want to find the Weyl-type eigenvalue problem for the non-vanishing of this multiplicity.

\subsection{Necessary lemmas}

Before we state the generalized eigenvalue problem for the sun quiver, we state two results from linear algebra that will be fundamental for us. 

\begin{proposition}[\cite{Chi08}, Proposition 7.1]
\label{matrix-equations}
	Let $Q$ be a quiver without oriented cycles, $\beta$ a dimension vector, and $\sigma \in \RR^{Q_0}$. The following are equivalent:
	\begin{enumerate}
		\item $\sigma \in \cone$;
		\item there exists $W = \{W(a)\}_{a \in Q_1} \in \Rep$ satisfying
		\begin{equation*}
			\sum_{\substack{a \in Q_1 \\ ta = x}} W(a)^*W(a) - \sum_{\substack{a \in Q_1 \\ ha = x}} W(a)W(a)^* = \sigma(x) \Id_{\beta(x)}
		\end{equation*}
		for all $x \in Q_0$, where $W(a)^*$ is the adjoint of $W(a)$ with respect to the standard Hermitian inner product on $\C^n$.
	\end{enumerate}
\end{proposition}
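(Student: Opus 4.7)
The plan is to recognize the right-hand side of the equation in (2) as a moment map equation for the natural action of the unitary group $U(\beta) = \prod_{x} U(\beta(x))$ on the Kähler affine space $\Rep$, so that the proposition becomes an instance of the standard Kempf--Ness/Kirwan correspondence between GIT (semi)stability with respect to a character and zeros of the shifted moment map. I would prove the two directions separately, since one of them is elementary and the other requires the heavy machinery.

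For the implication $(2) \Longrightarrow (1)$, I would argue directly via King's numerical criterion. First, summing the equation in (2) over all $x \in Q_0$ (taking trace and weighting by $\sigma(x)$ after a telescoping bookkeeping) yields $\sigma(\beta)=0$, since $\trace(W(a)^*W(a))=\trace(W(a)W(a)^*)$ for each arrow. Next, for any subrepresentation $V' \hookrightarrow W$ of dimension vector $\beta'$, let $P(x)$ denote the orthogonal projection onto $V'(x)$. Taking the trace of the equation against $P(x)$ and summing over $x$, standard manipulations reduce the left side to $\sigma(\beta')$ and the right side to
\[
\sum_{a \in Q_1}\bigl(\|W(a)P(ta)\|^2 - \|P(ha)W(a)\|^2\bigr).
\]
The subrepresentation condition $W(a)P(ta) = P(ha)W(a)P(ta)$ forces $\|W(a)P(ta)\| \le \|P(ha)W(a)\|$ arrow-by-arrow, so $\sigma(\beta') \le 0$. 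By King's criterion, $W$ is $\sigma$-semistable, hence $\sigma \in \cone$.

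For the implication $(1) \Longrightarrow (2)$, I would invoke the Kempf--Ness theorem in the form adapted to characters: for an integer weight $\sigma$ with $\semi(Q,\beta)_\sigma \ne 0$, the existence of a nonzero semi-invariant $f$ of weight $\sigma$ yields a $U(\beta)$-invariant proper plurisubharmonic Kempf--Ness functional on $\gl(\beta) \cdot W$ whose critical points are precisely the solutions of the shifted moment map equation, and existence of a semistable point guarantees existence of a critical point in the closure of some orbit. Concretely, one minimizes $\|W\|^2 - 2\log|f(W)|$ along a $\gl(\beta)$-orbit of a $\sigma$-semistable $W$; the Euler--Lagrange equations are exactly the equations in (2). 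For integer $\sigma \in \cone$ this handles the claim, and saturation (Theorem~\ref{saturation}) extends it to rational $\sigma$. For real $\sigma$, one uses that both the set of solutions in (2) and the cone $\cone$ are defined by finitely many homogeneous rational inequalities (Theorem~\ref{facets} for $\cone$, and a convexity/closedness argument via limits of rescaled solutions $W/\sqrt{N}$ from $N\sigma$ for the moment map side), so agreement on rational rays forces agreement everywhere.

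The main obstacle is the $(1) \Longrightarrow (2)$ direction, specifically the passage from the existence of a semi-invariant to an actual solution of the moment map equation: the Kempf--Ness minimization is over a noncompact $\gl(\beta)$-orbit and a priori only yields an infimum, so one must verify that this infimum is attained (equivalently, that the orbit closure meets the shifted zero locus of the moment map). This is standard in the polystable case, but for the merely semistable case, one needs to argue that even though the minimum may be achieved on a degeneration $W_0$, the representation $W_0$ still satisfies the moment map equation for the same $\sigma$. The extension from rational to real $\sigma$ is the other technical step, but it reduces to convexity of the moment polytope and the rational polyhedrality already established for $\cone$.
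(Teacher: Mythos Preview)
The paper does not give its own proof of this proposition; it is quoted verbatim from \cite{Chi08} and used as a black box. So there is no in-paper argument to compare against.

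Your proposal is the standard Kempf--Ness argument and is essentially how the result is proved in the literature (see also \cite{CG02}, which the paper cites for the closely related Lemma~\ref{Hermitians}). A couple of small comments. In the $(2)\Rightarrow(1)$ direction you have the two sides swapped: tracing the moment map equation against $P(x)$ and summing puts $\sigma(\beta')$ on the right and $\sum_a(\|W(a)P(ta)\|^2-\|P(ha)W(a)\|^2)$ on the left; the inequality and conclusion are unaffected. Note also that you only check $\sigma(\dmm V')\le 0$ for subrepresentations $V'$ of the particular $W$, whereas membership in $\cone$ (as defined in the paper, just after Theorem~\ref{facet-decomp}) asks for $\sigma(\beta')\le 0$ for every $\beta'\hookrightarrow\beta$; this is fine because any such $\beta'$ is realized as a subrepresentation of $W$. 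For $(1)\Rightarrow(2)$ your outline is correct: for integral $\sigma$ one minimizes the Kempf--Ness functional over the orbit closure of a $\sigma$-semistable point and observes that a minimizer exists on the polystable degeneration, which still solves the same moment map equation; the passage to real $\sigma$ via rational polyhedrality of $\cone$ is the standard closing step.
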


\begin{lemma}
\label{Hermitians}
	Let $\sigma(1), \ldots, \sigma(n-1)$ be non-positive real numbers. The following are equivalent:
		\begin{enumerate}
			\item there exist $W_i \in \Mat_{i \times (i+1)}(\C), \, 1 \leq i \leq n-1,$ such that 
			\[
			\begin{array}{rcl}
			W_i W_i^* - W_{i-1}^* W_{i-1} & =&  - \sigma(i) \Id_{i} \quad \text{ for } 2 \leq i \leq n-1, \\
			W_1 W_1^* & = & -\sigma(1);
			\end{array}
			\]
			\item the matrix $H=W_{n-1}^*W_{n-1}$ is Hermitian and has eigenvalues
			\[
			v(k) = - \sum_{j=k}^{n-1} \sigma(j), \quad \forall \, 1 \leq k \leq n-1 \quad \text{and} \quad v(n) = 0.
			\]
		\end{enumerate}
\end{lemma}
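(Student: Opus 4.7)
The plan is to reduce the equivalence to the spectral identity that for any rectangular matrix $W$, the products $WW^*$ and $W^*W$ share the same non-zero eigenvalues with matching multiplicities and differ only in the multiplicity of the zero eigenvalue. Set $A_i := W_i W_i^* \in \Mat_{i \times i}(\C)$ and $B_i := W_i^* W_i \in \Mat_{(i+1) \times (i+1)}(\C)$, so that the chain relations become $A_1 = -\sigma(1)$ and $A_i = B_{i-1} - \sigma(i)\Id_i$ for $2 \leq i \leq n-1$, while $H = B_{n-1}$.

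For the direction $(1) \Rightarrow (2)$, the plan is to compute the spectrum of $A_i$ by induction on $i$. The base case is the scalar identity $A_1 = -\sigma(1)$. For the inductive step, the spectral identity gives $\mathrm{Spec}(B_{i-1}) = \mathrm{Spec}(A_{i-1}) \cup \{0\}$ as multisets, and subtracting $\sigma(i)\Id_i$ shifts every eigenvalue by $-\sigma(i)$. A telescoping calculation then yields $\mathrm{Spec}(A_i) = \{-\sum_{j=k}^{i}\sigma(j) : 1 \leq k \leq i\}$. Applying the spectral identity once more to $B_{n-1} = H$ produces exactly the eigenvalues $v(1), \ldots, v(n-1), v(n) = 0$; Hermiticity is automatic because $H$ has the form $W^*W$.

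For $(2) \Rightarrow (1)$, the plan is to build the $W_i$'s from the bottom up, using the non-positivity of $\sigma$. Because each $\sigma(j) \leq 0$, every partial sum $-\sum_{j=k}^{i}\sigma(j)$ is non-negative, so the $A_i$ dictated by the recursion are positive semi-definite and admit square-root factorizations. Take $W_1 := (\sqrt{-\sigma(1)},\,0) \in \Mat_{1 \times 2}(\C)$. Given $W_{i-1}$, form $B_{i-1}$, set $A_i := B_{i-1} - \sigma(i)\Id_i$, diagonalize $A_i = U D U^*$ with $D$ diagonal and non-negative, and define $W_i := U D^{1/2} [\Id_i \mid 0] \in \Mat_{i \times (i+1)}(\C)$, where $[\Id_i \mid 0]$ denotes the $i \times (i+1)$ matrix that adjoins a zero column. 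A direct check gives $W_i W_i^* = A_i$, and the construction continues up to $i = n-1$. The only technical ingredient is the spectral identity between $A_i$ and $B_i$; the assumption $\sigma \leq 0$ enters solely to guarantee positive semi-definiteness at every inductive step, so there is no serious obstacle beyond careful bookkeeping.
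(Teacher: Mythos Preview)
The paper does not supply its own argument here; it simply refers the reader to Section~3.4 of Crawley-Boevey--Geiss \cite{CG02}. Your proof is correct and is essentially the standard one found in that reference: the inductive transfer of spectra via the identity $\mathrm{Spec}(W^{*}W)=\mathrm{Spec}(WW^{*})\cup\{0\}$ (as multisets, for $W$ of size $i\times(i+1)$) is exactly the mechanism used there, and your telescoping computation of $\mathrm{Spec}(A_i)$ is clean and accurate. One cosmetic remark on the converse direction: if condition~(2) is read as fixing a \emph{particular} Hermitian $H$ rather than merely asserting its existence, your bottom-up construction yields a $W_{n-1}$ with $W_{n-1}^{*}W_{n-1}$ only unitarily equivalent to $H$; right-multiplying $W_{n-1}$ by a suitable $n\times n$ unitary then matches $H$ exactly without disturbing $W_{n-1}W_{n-1}^{*}$ or any earlier relation.
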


\begin{proof}
See Section 3.4 of \cite{CG02}.
\end{proof}

This result will allow us to build up an $n \times n$ Hermitian matrix along each flag, though the result as stated only applies to flags that are going out from a central vertex. For the flags going into a central vertex, we need to use the dual form of the above lemma. Namely, $\sigma(i)$ is a non-negative number for each $1 \leq i \leq n-1$, so replace each $-\sigma(i)$ above with $\sigma(i)$, and switch the order of multiplication of $W_i$ and $W_i^*$ in each case for $W_i \in \Mat_{(i+1)\times i}(\C)$.

\subsection{Generalized eigenvalue problem for $f$}

Recall the construction of the $2k$-sun quiver in Section \ref{section-saturation}. The weight for this quiver is

\[
\sigma_1(j,i) = \begin{cases}
(-1)^i(\lambda(i)_j - \lambda(i)_{j+1}) & 1 \leq i \leq 2k, \, 1 \leq j \leq n-1 \\
(-1)^i \lambda(i)_n & 1 \leq i \leq 2k, \, j = n. \end{cases}
\]

From Proposition \ref{matrix-equations}, $\sigma_1 \in \cone$ if and only if there is a representation $W \in \Rep$ satisfying the specified matrix equations. These equations are essentially the same as those in Lemma \ref{Hermitians} for a flag $\F(i)$ going out of a central vertex, meaning when $i$ is odd, or in the dual statement when $\F(i)$ is going into a central vertex, meaning when $i$ is even. In either case, the first $n-1$ vertices provide $n \times n$ Hermitian matrices $H(i)$ with eigenvalues 
\[
(\lambda(i)_1 - \lambda(i)_n, \ldots, \lambda(i)_{n-1} - \lambda(i)_n, 0)
\]
for each $1 \leq i \leq m$. 

We now consider the equations arising from the central vertices. Denote the $(n-1)^{th}$ arrow along the $i^{th}$ flag as simply $b^i$ and denote the arrows between the central vertices by the usual partition labeling. The equations arising from the central vertices are, for $1 \leq i \leq k$, 
\[
\begin{array}{rcl}
W(b^{2i-1})^*W(b^{2i-1}) - W(\alpha(2i-1))W(\alpha(2i-1))^* - W(\alpha(2i-2))W(\alpha(2i-2))^* &=& -\lambda(2i-1)_n\Id_n, \\\\
W(\alpha(2i-1))^*W(\alpha(2i-1)) + W(\alpha(2i))^*W(\alpha(2i)) - W(b^{2i})W(b^{2i})^* &=& \lambda(2i)_n\Id_n,
\end{array}
\]
where $\alpha(0) = \alpha(m)$. 
We may rewrite these equations by making a few simple observations. Lemma \ref{Hermitians} gives the Hermitian matrices as $H(i) = W(b^i)^*W(b^i)$, or $W(b^i)W(b^i)^*$ depending on the direction of the flag. Clearly, since each $H(i)$ is Hermitian with spectrum $(\lambda(i)_1 - \lambda(i)_n, \ldots, \lambda(i)_{n-1} - \lambda(i)_n, 0)$, $H(i) + \lambda(i)_n \Id_n$ is Hermitian with spectrum $\lambda(i)$; denote this new Hermitian matrix again by $H(i)$. We may conjugate the equations by unitary matrices, if necessary. Moreover, for any $n \times n$ matrix $A$, both $AA^*$ and $A^*A$ are positive semi-definite and have the same spectra, and any positive semi-definite Hermitian matrix $B$ can be written as $WW^*$ or $W^*W$, 
so we may simplify the forms of the equations. We conclude that $\sigma_1 \in \cone$ if and only if there are Hermitian matrices $H(i)$ with spectra $\lambda(i), \, 1 \leq i \leq m,$ and positive semi-definite $n \times n$ matrices $B(\alpha(i))$ such that 
\begin{equation}
\label{Horn-Sun}
H(i) = B(\alpha(i)) + B(\alpha(i-1)), \quad 1 \leq i \leq m, \;\;  
\end{equation}
where $B(\alpha(0)) = B(\alpha(m))$.
Solving for any of the $B(\alpha(i))$ gives $\sum_{i \text{ even}} H(i) = \sum_{i \text{ odd}} H(i)$. Furthermore, because each $H(i)$ is a sum of positive semi-definite matrices, each $H(i)$ must have non-negative eigenvalues. In addition, we get several other conditions on the Hermitian matrices, namely, we can express alternating sums of an odd number of consecutive indexed matrices as a sum of positive semi-definite matrices. Specifically,
\[
H(i) - H(i+1) + \cdots - H(i+j-1) + H(i+j) = B(i-1) + B(i+j), \quad j \in \{0,2,4,\ldots, 2k-2\},
\]
where we are taking $H(m+1) = H(1)$, and so on in cyclic fashion. Thus, each such alternating sum is positive semi-definite. (There is, of course, some redundancy in this statement and the previously stated conditions on the $H(i)$.) These are all the conditions on the $H(i)$ which we can conclude from (\ref{Horn-Sun}). Thus, we've found the necessary conditions, stated above, posing the following problem and proving the subsequent statement.\smallskip

\noindent \textbf{Generalized eigenvalue problem for $f$}.
For which weakly decreasing sequences $\lambda(1),\ldots, \lambda(2k)$, $k \geq 2$, of $n$ non-negative real numbers do there exist $n \times n$ complex Hermitian matrices $H(1), \ldots, H(2k)$ with eigenvalues $\lambda(1),\ldots, \lambda(2k)$ such that 
\[
\sum_{i \text{ even}} H(i) = \sum_{i \text{ odd}} H(i), 
\]
and such that
\[
H(i) - H(i+1) + \cdots - H(i+j-1) + H(i+j), \quad j \in \{0,2, \ldots, 2k-2\},
\]
has non-negative eigenvalues, where $H(2k+1) = H(1)$ and so on in cyclic fashion?

\begin{proposition}
\label{eigenvalue-sun}
Suppose $\lambda(1),\ldots, \lambda(2k)$, $k \geq 2$, are weakly decreasing sequences of $n$ non-negative real numbers, and let $Q$ be the sun quiver, $\beta$ the standard dimension vector, and $\sigma_1$ the weight defined in equation (\ref{weight-sigma_1}). If $\sigma_1 \in \cone$, then there exist $n \times n$ complex Hermitian matrices $H(1), \ldots, H(2k)$ with eigenvalues $\lambda(1),\ldots, \lambda(2k)$ that solve the generalized eigenvalue problem for the multiplicity $f$.
\end{proposition}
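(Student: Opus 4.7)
The plan is to unpack Proposition \ref{matrix-equations} for the sun quiver and then organize the resulting moment-map equations flag-by-flag and vertex-by-vertex, so that the Hermitian matrices $H(i)$ and the auxiliary positive semi-definite matrices $B(\alpha(i))$ appear naturally and so that the required identities and semi-definiteness conditions fall out by elementary linear algebra. Concretely, I would start by assuming $\sigma_1 \in C(Q,\beta)$ and invoking Proposition \ref{matrix-equations} to produce a representation $W = \{W(a)\}_{a \in Q_1} \in \Rep$ satisfying
\[
\sum_{ta=x} W(a)^*W(a) \;-\; \sum_{ha=x} W(a)W(a)^* \;=\; \sigma_1(x)\,\Id_{\beta(x)}
\]
for every vertex $x \in Q_0$.

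Next, I would restrict attention to a fixed flag $\F(i)$. The equations at the first $n-1$ vertices of $\F(i)$ are exactly (for $i$ odd) the hypotheses of Lemma \ref{Hermitians} with $\sigma(j) = \sigma_1(j,i) \leq 0$, so that the matrix $W(b^i)^*W(b^i)$ is Hermitian with spectrum $(\lambda(i)_1-\lambda(i)_n,\ldots,\lambda(i)_{n-1}-\lambda(i)_n,0)$; for $i$ even I would apply the dual form of the same lemma to $W(b^i)W(b^i)^*$. Shifting by $\lambda(i)_n\,\Id_n$, I obtain an $n\times n$ Hermitian matrix $H(i)$ with spectrum exactly $\lambda(i)$, which can be placed in the form $W(b^i)^*W(b^i) + \lambda(i)_n\,\Id_n$ (or its dual) at the central vertex.

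Now I would turn to the equations at the central vertices $(n,i)$. Writing them out and substituting $H(i) = W(b^i)^*W(b^i) + \lambda(i)_n\,\Id_n$ (resp.\ dual), the matrix equation at an odd central vertex becomes
\[
H(2j-1) \;=\; W(\alpha(2j-1))W(\alpha(2j-1))^* \;+\; W(\alpha(2j-2))W(\alpha(2j-2))^*,
\]
and at an even central vertex
\[
H(2j) \;=\; W(\alpha(2j-1))^*W(\alpha(2j-1)) \;+\; W(\alpha(2j))^*W(\alpha(2j)).
\]
Since $AA^*$ and $A^*A$ are positive semi-definite with the same spectrum, after replacing each pair by a single positive semi-definite matrix $B(\alpha(i))$ (possibly conjugating by a unitary so that both appearances of $\alpha(i)$ give the same matrix), I obtain the uniform identities
\[
H(i) \;=\; B(\alpha(i-1)) \;+\; B(\alpha(i)),\qquad 1\le i\le 2k,
\]
with indices read cyclically. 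From these, summing with alternating signs gives
$\sum_{i\text{ even}} H(i) = \sum_{i\text{ odd}} H(i)$, and for any even $j \in \{0,2,\ldots,2k-2\}$ the telescoping alternating sum
\[
H(i)-H(i+1)+\cdots-H(i+j-1)+H(i+j) \;=\; B(\alpha(i-1))+B(\alpha(i+j))
\]
is a sum of positive semi-definite matrices and hence has non-negative eigenvalues. This verifies all the conditions in the generalized eigenvalue problem for $f$.

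The only real subtlety — and the place I would be most careful — is the unitary bookkeeping needed to guarantee that the matrix $B(\alpha(i))$ appearing as $W(\alpha(i))W(\alpha(i))^*$ at the odd endpoint of the arrow $\alpha(i)$ and the matrix $W(\alpha(i))^*W(\alpha(i))$ appearing at the even endpoint can be identified consistently while simultaneously keeping the two $H(i)$ in their spectral form; this is a routine but slightly finicky application of the singular value decomposition to each $W(\alpha(i))$, and it is exactly here that one uses the freedom to conjugate by elements of $U(\beta(n,i))$ at each central vertex. Once this identification is arranged, the remainder of the argument is purely formal.
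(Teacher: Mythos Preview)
Your proposal is correct and follows essentially the same approach as the paper: invoke Proposition~\ref{matrix-equations}, apply Lemma~\ref{Hermitians} (and its dual) along each flag to build the Hermitian matrices $H(i)$, shift by $\lambda(i)_n\Id_n$, write out the moment-map equations at the central vertices, and then use the facts that $AA^*$ and $A^*A$ are positive semi-definite with equal spectra (together with unitary conjugation) to arrive at $H(i)=B(\alpha(i-1))+B(\alpha(i))$, from which the alternating-sum conditions follow by telescoping. You are even slightly more explicit than the paper about the unitary bookkeeping needed to identify $W(\alpha(i))W(\alpha(i))^*$ and $W(\alpha(i))^*W(\alpha(i))$ consistently, which the paper handles with the single remark ``we may conjugate the equations by unitary matrices, if necessary.''
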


While an effective weight defines Hermitian matrices satisfying these conditions, the conditions on the matrices are not sufficient; counterexamples are easily found. Alone, they do not determine a weight because we cannot recapture the decompositions of each $H(i)$ into a sum of the particular positive semi-definite matrices, no canonical choice being available. Any additional conditions would need to record the ``linkage" between the consecutive $H(i)$, that is, the fact that they share a common positive semi-definite matrix in their decompositions.

Define the set $K(n,m) \subseteq \R^{mn}$, $m \geq 4$ and even,  to be all $m$-tuples $(\lambda(1),\ldots, \lambda(m))$ of weakly decreasing sequences of $n$ reals that satisfy $\sum_{i \text{ even}} |\lambda(i)| = \sum_{i \text{ odd}} |\lambda(i)|$ and 
		\begin{equation*}
		\sum_{j \in I_i} \sum_{i \text{ even}} \lambda(i)_j \leq \sum_{j \in I_i} \sum_{i \text{ odd}} \lambda(i)_j
		\end{equation*}
		for every tuple $(I_1, \ldots, I_m)$ such that the $\unlam(I_i)$, $1 \leq i \leq m$, are partitions and 
		\[
		f(\unlam(I_1), \ldots, \unlam (I_m)) \neq 0.
		\]
This makes $K(n,m)$ a rational convex polyhedral cone in $\R^{mn}$, which we call the \bi{generalized Klyachko's cone} for this eigenvalue problem.

\begin{proof}[Proof of Theorem \ref{generalization-Horn}]
The first and third statements follow from Proposition \ref{conditions-weight-cone}, while the second follows from additionally Proposition \ref{eigenvalue-sun}. 
Letting $Q$ denote the sun quiver, there is a map of cones
\[
  K(n,m)  \to  C(Q, \beta) \qquad \quad 
  (\lambda(1), \ldots, \lambda(m))  \mapsto \sigma_1.
\]
This map is an isomorphism of cones by the chamber inequalities in Lemma \hyperref[inequalities]{\ref{inequalities}(1)} and Proposition \ref{eigenvalue-sun}. We found the dimension of $C(Q, \beta)$  to be $mn-1$ in Corollary \ref{dim-cones}, which proves the last statement.
\end{proof}

\vspace{.2in}

\section{Factorization formula}\label{section-factorization}

Derksen and Weyman showed the following result for the star quiver (the quiver they used to represent a single Littlewood-Richardson coefficient as the dimension of a weight space of semi-invariants), where $\beta$ is the corresponding dimension vector.

\begin{theorem}[\cite{DW11}, Theorem 7.8]
\label{decomp}
	For all $\beta_1, \beta_2$ such that $\beta = \beta_1 + \beta_2$, where $\beta_1, \beta_2$ are nondecreasing along the flags and $\beta_1 \circ \beta_2 =1$, the inequality $\sigma(\beta_1) \leq 0$ defines a wall of $\RR^+ C(Q, \beta)$. Furthermore, all nontrivial walls are obtained in this way. 
\end{theorem}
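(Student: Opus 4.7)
The plan is to derive this result as a corollary of Theorem~\ref{facets}, which already furnishes a bijection $(\beta_1,\beta_2)\mapsto \mathbb{H}(\beta_1)\cap C(Q,\beta)$ between $W_2(Q,\beta)$ and the facets of $C(Q,\beta)$. The entire task then reduces to a concrete characterization of $W_2(Q,\beta)$ for the star quiver, in a manner exactly parallel to Lemma~\ref{facets-2} for the sun quiver.

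For the necessity direction (``all nontrivial walls are obtained in this way''), I would take $(\beta_1,\beta_2)\in W_2(Q,\beta)$ with $\beta=c_1\beta_1+c_2\beta_2$ and argue as in the proof of Lemma~\ref{facets-2}. First, pick a $\beta$-dimensional representation $V$ whose linear maps along the flags are as injective or surjective as possible (depending on orientation) towards the central vertex. Since $c_1\beta_1\circ c_2\beta_2\neq 0$, Theorem~\ref{spanning} supplies a subrepresentation of $V$ of dimension $c_1\beta_1$, and the genericity of $V$ forces $c_1\beta_1$ to be weakly increasing along each flag with jumps of size at most one. Second, if $c_1\beta_1$ were to decrease somewhere along a flag, reciprocity (Lemma~\ref{reciprocity}) combined with Theorem~\ref{Schofield-Schur} applied to the Schur root $\beta_1$ would yield a contradiction via the $\sigma_{\beta_1}$-semistability of $\beta_1$, exactly as in the sun-quiver argument. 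Finally, because each jump $c_i(\beta_i(l+1)-\beta_i(l))$ lies in $\{0,1\}$ along every flag and at least one such jump must be strictly positive (else $\beta_1$ would be supported only at the central vertex and give a trivial wall), one concludes $c_1=c_2=1$, so the facet is indeed of the asserted form.

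For the sufficiency direction, I would take a decomposition $\beta=\beta_1+\beta_2$ satisfying the hypotheses and verify the four defining conditions of $W_2(Q,\beta)$. The assumption $\beta_1\circ\beta_2=1$ handles one condition directly. The Schur property of each $\beta_i$ would be established via a Kac-type argument (of the form used in Lemma~\ref{Schur-root}), checking that $\beta_i$ lies in the fundamental region of the support subquiver, or equivalently verifying Theorem~\ref{Schofield-Schur}(2) by exhibiting an explicit Schur representation assembled from the flag structure. The condition $c_i=1$ whenever $\langle\beta_i,\beta_i\rangle<0$ is automatic since $c_1=c_2=1$.

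The main obstacle, and the step I expect to require the most care, is promoting $\beta_1\circ\beta_2=1$ to the stronger requirement $s_1\beta_1\circ s_2\beta_2=1$ for all $s_1,s_2\geq 1$. Saturation (Theorem~\ref{saturation}) immediately gives nonvanishing, but equality to one is delicate. My approach here is to exploit the nondecreasing-along-flags hypothesis to rigidify a generic $s_1\beta_1$-dimensional subrepresentation inside a generic $(s_1\beta_1+s_2\beta_2)$-dimensional one: the flag maps being forced to have maximal rank, together with the jump-of-at-most-one condition, uniquely determines the subrepresentation up to the expected torus action. Via reciprocity this collapses the weight space $\semi(s_1\beta_1+s_2\beta_2)_{\langle s_1\beta_1,\cdot\rangle}$ down to the one-dimensional space obtained at $s_1=s_2=1$, yielding $s_1\beta_1\circ s_2\beta_2=1$. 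With all four conditions verified, $(\beta_1,\beta_2)\in W_2(Q,\beta)$, and Theorem~\ref{facets} produces the claimed wall.
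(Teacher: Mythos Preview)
This theorem is not proved in the present paper; it is quoted from \cite{DW11} (their Theorem~7.8) as a known result for the star quiver. The paper offers no argument for it beyond the parenthetical remark, just after the statement, that ``the proof only relies on $Q$ being acyclic, $\beta$ being Schur, and the $\beta_1,\beta_2$ satisfying the given assumptions,'' and that it therefore carries over to the sun quiver. So there is no in-paper proof to compare against; your proposal is effectively an attempt to reconstruct the argument from \cite{DW11}.

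That said, your strategy of deducing everything from Theorem~\ref{facets} is the right framework, and your necessity direction is essentially the argument of Lemma~\ref{facets-2} transported back to the star quiver, which is appropriate. The sufficiency direction, however, has two soft spots. First, the claim that each $\beta_i$ is Schur via a ``Kac-type fundamental-region'' check will not go through as stated: an arbitrary dimension vector that is nondecreasing with unit jumps along the flags need not lie in the fundamental region of its support (indeed $\tau_x(\beta_i)$ can be positive at the central vertex or at the tips of flags), so one must instead appeal to Schofield's characterization (Theorem~\ref{Schofield-Schur}) or exhibit an explicit Schur representation directly. Second, your mechanism for upgrading $\beta_1\circ\beta_2=1$ to $s_1\beta_1\circ s_2\beta_2=1$ for all $s_1,s_2\ge1$ is the genuinely nontrivial step and your sketch (``rigidify a generic subrepresentation \ldots\ collapses the weight space'') does not supply a proof; this is precisely the content that \cite{DW11} establishes and is not a routine consequence of saturation or reciprocity. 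In short, your outline identifies the correct obstacles but does not yet resolve them, and the paper itself defers entirely to \cite{DW11} rather than filling these gaps.
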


The same proof for the first part of the theorem applies to the sun quiver because the proof only relies on $Q$ being acyclic, $\beta$ being Schur, and the $\beta_1, \beta_2$ satisfying the given assumptions. The second part of the statement also easily extends to our setup. 
Thus, the each weight $\sigma_1$ satisfying the previously defined conditions  defines a wall of $\RR^+ C(Q_\beta)$, and all nontrivial walls of the cone are defined in this way. Because of this, we can extend their proof of the factorization formula for Littlewood-Richardson coefficients to generalized ones.

\begin{definition}
Let $Q$ be a quiver without oriented cycles, $\alpha$ a dimension vector, and $\sigma$ a weight such that $\sigma(\alpha) = 0$. We call $\alpha$ {\it $\sigma$-(semi-)stable} if a general representation of dimension $\alpha$ is $\sigma$-(semi-)stable. We write a decomposition of $\alpha$ into smaller dimension vectors as $\alpha = \alpha_1 \dotplus \ldots \dotplus \alpha_s$ and call this the {\it $\sigma$-stable decomposition} of $\alpha$ if a general representation $V$ of dimension $\alpha$ has a Jordan-H\"{o}lder filtration with composition factors of dimension $\alpha_1, \ldots, \alpha_s$, in some order, including multiplicity.
\end{definition}

We may rewrite the $\sigma$-stable decomposition of a dimension vector $\alpha$ by grouping together the common sub-dimension vectors. If $\alpha_i$ occurs $c_i$ times as the dimension vector of a composition factor in the Jordan-H\"{o}lder filtration of $\alpha$, we write the $\sigma$-stable decomposition of $\alpha$ as 
\[
\alpha = c_1 \cdot \alpha_1 \dotplus c_2 \cdot \alpha_2 \dotplus \ldots \dotplus c_s \cdot \alpha_s,
\]
where $c_i \in \Z^+$ for all $i$ and $\alpha_i \neq \alpha_j$ if $i \neq j$.

\begin{theorem}[\cite{DW11}, Theorem 3.20]
\label{si-factorization}
	Suppose $\sigma$ is an indivisible weight. If 
	\[
	\alpha = c_1 \cdot \alpha_1 \dotplus c_2 \cdot \alpha_2 \cdot \dotplus \ldots \dotplus c_r \cdot \alpha_r
	\]
	is the $\sigma$-stable decomposition of $\alpha,$ then for any $s \in \Z$, there is an equality
	\[
	\dm \ssi(Q,\alpha)_{s \sigma} = \prod_{i=1}^r \dm (S^{c_i} (\ssi(Q, \alpha_i)_{s\sigma})),
	\]
	where $S^{c_i}$ is the $c_i^{th}$-symmetric power.
\end{theorem}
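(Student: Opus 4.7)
The plan is to prove the dimension identity
\[
\dim \ssi(Q,\alpha)_{s\sigma} = \prod_{i=1}^r \dim S^{c_i}\bigl(\ssi(Q,\alpha_i)_{s\sigma}\bigr)
\]
moduli-theoretically, by exhibiting a natural isomorphism between the moduli space $\mathcal{M}(Q,\alpha)^{ss}_\sigma$ and a product of symmetric products of the stable moduli $\mathcal{M}(Q,\alpha_i)^s_\sigma$, and then computing global sections of the tautological ample line bundle on each side.

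First, I would invoke King's GIT description: for $Q$ acyclic, the graded algebra $A(\alpha) = \bigoplus_{s\geq 0}\ssi(Q,\alpha)_{s\sigma}$ is canonically the homogeneous coordinate ring of the projective moduli space $\mathcal{M}(Q,\alpha)^{ss}_\sigma$ of S-equivalence classes of $\sigma$-semistable representations, and $\ssi(Q,\alpha)_{s\sigma} = H^0\bigl(\mathcal{M}(Q,\alpha)^{ss}_\sigma, \mathcal{L}_\sigma^{\otimes s}\bigr)$ for the ample line bundle $\mathcal{L}_\sigma$ determined by $\sigma$; indivisibility of $\sigma$ is used to secure this identification degree by degree. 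I would then define the natural morphism
\[
\Phi\colon \prod_{i=1}^r \mathrm{Sym}^{c_i}\bigl(\mathcal{M}(Q,\alpha_i)^s_\sigma\bigr) \longrightarrow \mathcal{M}(Q,\alpha)^{ss}_\sigma
\]
sending an unordered tuple $(V_i^{(j)})$ of $\sigma$-stable representations to the S-equivalence class of the polystable representation $\bigoplus_{i,j} V_i^{(j)}$. The hypothesis that $\alpha = c_1\cdot\alpha_1\dotplus\cdots\dotplus c_r\cdot\alpha_r$ is the $\sigma$-stable decomposition says precisely that a generic $\sigma$-semistable representation of dimension $\alpha$ is polystable of this type, so $\Phi$ is dominant; uniqueness of Jordan--H\"older factors in the abelian category of $\sigma$-semistables gives injectivity on polystable points; and a finite birational morphism onto a normal projective target is an isomorphism of schemes. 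Granting this, $\Phi^*\mathcal{L}_\sigma^{\otimes s}$ decomposes as the external product $\boxtimes_i \mathcal{L}_i^{\otimes s,(c_i)}$, where $\mathcal{L}_i^{(c_i)}$ denotes the induced $S_{c_i}$-invariant line bundle on $\mathrm{Sym}^{c_i}(\mathcal{M}_i)$, and K\"unneth combined with the standard identity
\[
H^0\bigl(\mathrm{Sym}^{c_i}(\mathcal{M}_i),\mathcal{L}_i^{\otimes s,(c_i)}\bigr) = \bigl(H^0(\mathcal{M}_i,\mathcal{L}_i^{\otimes s})^{\otimes c_i}\bigr)^{S_{c_i}} = S^{c_i}\bigl(\ssi(Q,\alpha_i)_{s\sigma}\bigr)
\]
delivers the identity after taking dimensions.

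The main obstacle is step two: promoting the set-theoretic bijection of $\Phi$ to a scheme-theoretic isomorphism, which is where the ramification has to be controlled. One route is the infinitesimal criterion, checking that the relevant $\mathrm{Ext}^1$ groups between distinct $\sigma$-stable composition factors have the expected dimensions so that $\Phi$ is \'etale at polystable points and hence an isomorphism via normality of the target (established from the GIT description of the moduli space as a quotient of a smooth affine variety by a reductive group). A purely algebraic alternative --- essentially the route in \cite{DW11} --- bypasses moduli entirely: one constructs the comparison map $A(\alpha) \to \bigotimes_i \mathrm{Sym}^\bullet A(\alpha_i)$ directly from the multiplicative behavior of Schofield semi-invariants under direct sums, verifies injectivity of each graded piece by restricting to the dense locus of polystables with the prescribed Jordan--H\"older type, and matches Hilbert series to conclude surjectivity. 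Either route reduces the problem to the Schur-stable base case, in which both sides of the formula are one-dimensional and the identity is tautological.
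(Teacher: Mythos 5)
This theorem is cited in the paper from \cite{DW11} (Theorem 3.20) and the paper itself supplies no proof, so there is no internal argument to compare against; I am evaluating your proposal on its own terms.

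Your moduli-theoretic route has a gap that is more serious than the ramification issue you flag. The source of your map $\Phi$ is $\prod_i \mathrm{Sym}^{c_i}\bigl(\mathcal{M}(Q,\alpha_i)^{s}_\sigma\bigr)$, built from the \emph{stable} loci, which are open and in general non-proper; a finite morphism from a non-proper scheme onto the projective target $\mathcal{M}(Q,\alpha)^{ss}_\sigma$ is impossible, so the ``finite birational onto a normal projective target'' clause cannot be invoked. Worse, $\Phi$ is genuinely non-surjective in general: the $\sigma$-stable decomposition only controls the Jordan--H\"older type of a \emph{generic} $\sigma$-semistable representation of dimension $\alpha$, while the closed orbits in $\Rep(Q,\alpha)^{ss}$ can correspond to polystables whose stable factors have dimension vectors outside $\{\alpha_1,\dots,\alpha_r\}$. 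A clean example: for the $3$-Kronecker quiver with $\alpha=(3,3)$ and $\sigma=(1,-1)$, the $\sigma$-stable decomposition is the trivial one $1\cdot(3,3)$ (so the theorem asserts nothing), yet $\mathcal{M}(Q,(3,3))^{ss}_\sigma$ strictly contains the stable locus, with boundary strata of polystable types $(1,1)\oplus(2,2)$ and $(1,1)^{\oplus 3}$; hence $\Phi$ is the non-surjective open inclusion, not an isomorphism. One could try to salvage this route by showing the complement of the image has codimension $\ge 2$ and appealing to normality plus Hartogs' extension to identify $H^0$, but that codimension estimate requires an additional argument, can fail when real Schur roots with tiny moduli appear, and is absent from your write-up. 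Your ``purely algebraic alternative'' is closer to what \cite{DW11} actually does, but as sketched it is circular: you propose to ``match Hilbert series to conclude surjectivity,'' yet the equality of Hilbert series (i.e., of the dimensions $\dim \ssi(Q,\alpha)_{s\sigma}$ and $\prod_i \dim S^{c_i}(\ssi(Q,\alpha_i)_{s\sigma})$) \emph{is} the theorem. The Derksen--Weyman proof avoids this by building the comparison from properties of Schofield determinantal semi-invariants and the structure of the $\sigma$-stable decomposition (in particular the dichotomy between real/isotropic summands, where $c_i>1$ is allowed, and non-isotropic imaginary ones, where $c_i=1$ is forced), rather than by an a posteriori dimension count.
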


Theorem \ref{factorization} rephrases Theorem 7.14 in \cite{DW11} for generalized Littlewood-Richardson coefficients, and the proof is essentially the same.

\begin{proof}[Proof of Theorem \ref{factorization}]
The conditions on the subsets $I_j$ are precisely those defining the set $T(n,m)$ as shown in Lemma \ref{sun-Horn}. This set describes when the multiplicity $f$ is nonzero, as shown in 
 Proposition \ref{Horn-type-inequalities}. Furthermore, because the multiplicity $f$  agrees with the dimension of the respective weight space of semi-invariants, we can use Theorem \ref{decomp}.

	 If $\sigma_1$ is in the interior of the wall, then the $\sigma_1$-stable decomposition of $\beta$ is $\beta_1 \dotplus \beta_2$. The weight $\sigma_1$ is indivisible, so we may use Theorem \ref{si-factorization}  to get
	\[
	f(\lambda(1),\ldots, \lambda(m))= \dm \ssi(Q,\beta)_{\sigma_1} = \alpha \circ \beta = (\alpha \circ \beta_1)(\alpha \circ \beta_2) = f (\lambda(1)^*,\ldots, \lambda(m)^*) \cdot f (\lambda(1)^\#,\ldots, \lambda(m)^\#).
	\]
	If, on the other hand, $\sigma_1$ is not in the interior of a wall, then the $\sigma_1$-stable decompositions of $\beta_1$ and $\beta_2$ are of the form
	\[
	\beta_1 = c_1\cdot \gamma_1 \dotplus \ldots \dotplus c_s \cdot \gamma_s, \qquad \beta_2 = d_1 \cdot \delta_1 \dotplus \ldots \dotplus d_t \cdot \delta_t.
	\]
	Thus, the $\sigma_1$-stable decomposition of $\beta$ is the sum of these. Because the sets $\{\gamma_1, \ldots, \gamma_s\}$ and $\{\delta_1, \ldots, \delta_t\}$ are disjoint and $\gamma_i \circ \delta_j = 1$ for all $i,j$, Theorem \ref{si-factorization} again gives
	\[
	\begin{array}{rcl}
	f(\lambda(1),\ldots, \lambda(m)) & =&  \alpha \circ \beta \\
	& = & \prod (\alpha \circ (c_i \cdot \gamma_i)) \prod (\alpha \circ (d_i \cdot \delta_i)) \\
	& = & (\alpha \circ \beta_1)(\alpha \circ \beta_2) \\
	& = &  f (\lambda(1)^*,\ldots, \lambda(m)^*) \cdot f (\lambda(1)^\#,\ldots, \lambda(m)^\#).
	\end{array}
	\]
\end{proof}

\vspace{.2in}

\section{Level-1 weights and stretched polynomials}\label{stretched-polynomials}

The stretched function $f(N) = c^{N\nu}_{N\lambda, N\mu}$ for $N \in \Z^+$ for fixed partitions $\lambda, \mu, \nu$ has interesting combinatorial properties and has been studied by many people (see, for instance, \cite{KTT04}, \cite{KTT06a}, and \cite{KTT06b}). Inspired by Kirillov's proof that stretched Kostka numbers are polynomial in the stretching factor $N$ for fixed partitions, King, Tollu, and Toumazet made a similar conjecture for stretched Littlewood-Richardson numbers.

\begin{conjecture}[\cite{KTT04}]
For all partitions $\lambda, \mu,$ and $\nu$, there exists a polynomial $P^\nu_{\lambda, \mu}(N)$ in $N$ with nonnegative rational coefficients such that $P^\nu_{\lambda, \mu}(1) =c^\nu_{\lambda,\mu}$ and $P^\nu_{\lambda, \mu}(N) = c^{N\nu}_{N\lambda, N\mu}$ for all positive integers $N$. 
\end{conjecture}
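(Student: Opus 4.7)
The plan is to exploit the Derksen-Weyman representation of a single Littlewood-Richardson coefficient as the dimension of a weight space of semi-invariants for the triple-flag (star) quiver $Q$: with the appropriate dimension vector $\beta$ and weight $\sigma = \sigma_{\lambda,\mu,\nu}$, one has $c^{\nu}_{\lambda,\mu} = \dim \ssi(Q,\beta)_{\sigma}$ and $c^{N\nu}_{N\lambda,N\mu} = \dim \ssi(Q,\beta)_{N\sigma}$ for every integer $N \geq 1$. The conjecture therefore reduces to a statement about the Hilbert function $N \mapsto \dim \ssi(Q,\beta)_{N\sigma}$ of the graded algebra $R_\sigma := \bigoplus_{N\geq 0} \ssi(Q,\beta)_{N\sigma}$: it should be a polynomial in $N$ with nonnegative rational coefficients, of the expected value at $N=1$.

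First I would handle polynomiality by a GIT/cohomological argument. The ring $R_\sigma$ is the homogeneous coordinate ring of the moduli space $\mathcal{M} = \mathcal{M}(Q,\beta)^{ss}_\sigma$ polarized by a natural ample line bundle $\mathcal{L}_\sigma$, so $\dim \ssi(Q,\beta)_{N\sigma} = \dim H^0(\mathcal{M}, \mathcal{L}_\sigma^{\otimes N})$. When $\sigma$ lies in the interior of $\cone$ and a general $\beta$-dimensional representation is $\sigma$-stable, $\mathcal{M}$ is a normal projective variety with at worst rational singularities (after passing to a resolution). A vanishing theorem in the spirit of Kodaira or Teleman's quantization-commutes-with-reduction kills $H^i(\mathcal{M},\mathcal{L}_\sigma^{\otimes N})$ for $i \geq 1$ and $N \geq 1$, so the Hilbert function agrees with the Euler characteristic, which Hirzebruch-Riemann-Roch expresses as a polynomial of degree $\dim\mathcal{M}$ in $N$. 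Boundary cases (when $\sigma$ sits on a wall) can be handled by the $\sigma$-stable decomposition $\beta = c_1\cdot \beta_1 \dotplus \cdots \dotplus c_r\cdot \beta_r$ and the factorization formula of Theorem \ref{si-factorization}, which reduces polynomiality of the Hilbert function of $\beta$ to that of each $\beta_i$ together with multiplication by a symmetric-power factor of the same polynomial shape.

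The main obstacle is the nonnegativity of the rational coefficients of the resulting polynomial $P^\nu_{\lambda,\mu}(N)$. Polynomiality is accessible by the quiver-theoretic route above, but nonnegativity appears to require an extra positivity input. Two natural routes suggest themselves: either (i) realize $P^\nu_{\lambda,\mu}(N)$ as the Ehrhart polynomial of the Knutson-Tao hive polytope $P_{\lambda,\mu,\nu}$ and show that $P_{\lambda,\mu,\nu}$ admits a unimodular triangulation or, equivalently, is a Newton-Okounkov body of a nef line bundle on $\mathcal{M}$, so that the coefficients become mixed volumes and nonnegativity follows from Alexandrov-Fenchel; or (ii) produce a combinatorial model that directly presents $P^\nu_{\lambda,\mu}(N)$ as a sum of binomial coefficients $\binom{N+k}{k}$ coming from a shellable or Cohen-Macaulay simplicial structure on the hive. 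My expectation is that the polynomiality half can be pushed through using the quiver/GIT machinery already available in the paper, while genuine nonnegativity of the coefficients is the deep obstruction, which is why the authors in Section \ref{stretched-polynomials} proceed by explicitly computing $P^\nu_{\lambda,\mu}(N)$ (and its analogue for $f$) in families of examples rather than by tackling the conjecture uniformly.
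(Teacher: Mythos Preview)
The paper does not prove this statement: it is recorded as a \emph{Conjecture} of King, Tollu, and Toumazet and is included purely as background for Section~\ref{stretched-polynomials}. Immediately after stating it, the paper notes that Derksen--Weyman \cite{DW02} and Rassart \cite{Ras04} established the polynomiality assertion, and then moves on to discuss related conjectures (Fulton's, and the $P(1)=2$ case). There is therefore no ``paper's own proof'' against which to compare your proposal.

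Your sketch is a reasonable outline of how one would attack polynomiality via the quiver/GIT route (and indeed this is roughly the shape of the Derksen--Weyman argument in \cite{DW02}), and you are right to isolate the nonnegativity of the coefficients as the genuine obstruction. But your proposal does not actually close that gap: route~(i) would require showing the hive polytope has a unimodular triangulation or is a Newton--Okounkov body with the right properties, and route~(ii) would require producing the claimed shelling---neither is carried out, and to my knowledge the nonnegativity of the coefficients of $P^\nu_{\lambda,\mu}$ remains open in general. So what you have written is an accurate diagnosis of the difficulty rather than a proof, which is consistent with the paper's treatment: it, too, does not claim to prove the conjecture but instead verifies its predictions (and the related Fulton and $P(1)=2$ conjectures) in the explicit level-$1$ families computed in Proposition~\ref{LR-sun-polynomial}.
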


Along with this, Fulton conjectured that if $P(1) = 1$, then $P(N) = 1$ for all $N \geq 1$, while King, Tollu, and Toumazet conjectured that if $P(1) =2$, then $P(N) = N+1$. Derksen and Weyman proved the polynomiality conjecture \cite{DW02} and Rassart \cite{Ras04} proved it again shortly afterwards, while Fulton's conjecture was first proven combinatorially by Knutson, Tao, and Woodward \cite{KTW04}, then later geometrically by Belkale \cite{Bel07}, and again using quivers \cite{DW11}. The conjecture that $P(N) =N+1$ when $P(1)=2$ was first proven combinatorially by Ikenmeyer \cite{Ike16} and then geometrically and by using quivers by Sherman in \cite{She15} and \cite{She17}, respectively.

 In this section we explicitly compute the stretched function for certain weights for the sun quiver and verify that similar statements hold for the respective generalized Littlewood-Richardson coefficients. We also note that while similarly defined weights for the star quiver all lie on extremal rays of the cone of effective weights, this is not true for our case.

\subsection{Level-$1$ weights}
In \cite{Fei15}, Fei defines a weight for the star quiver 
to be level-$m$ if the weight has value $m$ at the central vertex. In Lemma $2.3$ of the paper, he classifies all level-$1$ effective weights and shows that they lie on an extremal ray.  For the star quiver, the Littlewood-Richardson coefficient arising from any effective level-1 weight is  of the form $c^{1^{i+j}}_{1^i, 1^j}$. We use this idea to describe similar weights for the sun quiver.

We define a \bi{level-1 weight} for the sun quiver to be a (for now, not necessarily effective) nonzero weight with at most one nonzero entry along any flag, with the nonzero entry being $1$ for the flags going out and $-1$ for the flags going in. This will correspond to at most one jump along each flag for the defining dimension vector.  Because $\sigma(\beta)=0$ is a necessary condition, if $j_1, \ldots, j_m$ are the vertices along the flags for which $\sigma(j_i) \neq 0$, counting the vertices towards the central ones (so $j_i$ is vertex $(j_i,i)$), with $j_i=0$ to mean that the weight is trivial along flag $i$, then $\sum_{i \text{ odd}} j_i = \sum_{i \text{ even}} j_i$ is necessary, though not sufficient.

 We'll find it useful in this section to describe the effective weights directly in terms of the jumping numbers $j_i$ rather than by the conditions found in Proposition \ref{eigenvalue-sun}.
 
\begin{lemma}\label{level-1-weight}
Let $\sigma$ be a level-$1$ weight for the sun quiver $Q$ and let $j_1, \ldots, j_m$ be the vertices along flags $1, \ldots, m$ for which $\sigma(j_i) \neq 0$ with $j_i = 0$ if $\sigma$ is trivial on flag $i$. Then the following are equivalent:
\begin{enumerate}
\item $\sigma \in \cone$;\vspace{.7pt}
\item   $\dsp \sum_{i \text{ odd}} j_i = \sum_{i \text{ even}} j_i$ and $j_i - j_{i+1} + j_{i+2} \geq 0$ for $1 \leq i \leq m$, where $j_{m+1} = j_1$ and $j_{m+2} = j_2$.
\end{enumerate}
\end{lemma}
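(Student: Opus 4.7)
The plan is to recognize a level-$1$ weight as an instance of the weight $\sigma_1$ from equation (\ref{weight-sigma_1}) for very simple partitions, and then to reduce $\sigma \in \cone$ to a purely combinatorial question about a cyclic linear system. A direct check from (\ref{weight-sigma_1}) shows that, for the column partitions $\lambda(i) = (1^{j_i})$ (taken empty when $j_i = 0$), the only nonzero entry on flag $i$ is at the vertex $(j_i, i)$, with the alternating signs on flags going in versus out. Thus, up to the sign convention, every level-$1$ weight arises in this way, and Lemma \ref{si-saturation} together with the saturation theorem (Theorem \ref{saturation-sun}) gives
\[
\sigma \in \cone \;\Longleftrightarrow\; f\bigl((1^{j_1}), \ldots, (1^{j_m})\bigr) \neq 0.
\]

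To evaluate the right-hand side, I would apply the Pieri rule: $c^{(1^k)}_{\mu, \nu}$ vanishes unless $\mu = (1^a)$ and $\nu = (1^b)$ are columns with $a + b = k$, in which case it equals $1$. Consequently every intermediate partition in the sum defining $f$ must itself be a column $\alpha(i) = (1^{a_i})$, and
\[
f\bigl((1^{j_1}), \ldots, (1^{j_m})\bigr) \;=\; \#\Bigl\{(a_1, \ldots, a_m) \in \Z_{\geq 0}^m : a_i + a_{i+1} = j_i \text{ for all } i \text{ (indices mod } m)\Bigr\}.
\]

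The principal step is then to characterize when this cyclic linear system admits a non-negative integer solution. Because $m$ is even, iterating the recursion $a_{i+1} = j_i - a_i$ around the cycle produces the compatibility relation $\sum_{i \text{ odd}} j_i = \sum_{i \text{ even}} j_i$, which is exactly the balance condition and also $\sigma(\beta) = 0$. Granting balance, the solution space is one-dimensional: parameterizing by $t = a_1$, each $a_\ell$ becomes $(-1)^{\ell-1} t$ plus a signed partial sum of the $j_i$'s. Requiring $a_\ell \geq 0$ for every $\ell$ thus translates into a finite list of lower and upper bounds on $t$, and the system is solvable precisely when every lower bound is dominated by every upper bound.

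The main obstacle, which I expect to be the delicate step, is to show that this collection of inequalities between alternating partial sums collapses, modulo balance and the tautological constraints $j_i \geq 0$, to exactly the cyclic family $j_i - j_{i+1} + j_{i+2} \geq 0$ for $1 \leq i \leq m$. I would handle it by rewriting each difference of partial sums cyclically via balance so that any apparently longer alternating-sum condition can be re-expressed as a non-negative combination of the stated $3$-term quantities and of non-negative $j_i$'s, and conversely by exhibiting, for each cyclic index $i$, a choice of $\ell$ for which the bound $a_\ell \geq 0$ is precisely $j_i - j_{i+1} + j_{i+2} \geq 0$.
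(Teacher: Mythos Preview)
Your reduction is exactly the paper's: identify the level-$1$ weight with $\sigma_1$ for the column partitions $\lambda(i)=(1^{j_i})$, use Lemma~\ref{si-saturation} to get $\sigma\in\cone\Leftrightarrow f((1^{j_1}),\dots,(1^{j_m}))\neq 0$, and translate via Pieri into the cyclic system $a_i+a_{i+1}=j_i$. (Saturation is not needed here; Lemma~\ref{si-saturation} already gives the equivalence for integral weights.) For necessity the paper is a bit slicker than your partial-sum bounds: from $a_i=j_i-j_{i+1}+j_{i+2}-a_{i+3}$ one reads off $a_i+a_{i+3}=J_i$, so $J_i\geq 0$ is immediate.

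The real issue is sufficiency, and here the ``delicate step'' you flag cannot be carried out in general, because the statement itself fails for $m\geq 10$. Take $m=10$ and
\[
(j_1,\dots,j_{10})=(1,7,11,7,1,1,3,3,3,1).
\]
Then $\sum_{\text{odd}}j_i=\sum_{\text{even}}j_i=19$ and one checks $J_i\in\{5,3,5,7,3,1,3,1,3,7\}$, all non-negative, so condition~(2) holds. But setting $a_1=t$ gives $a_6=-1-t$, so no non-negative solution exists and $\sigma\notin\cone$. The obstruction is the length-$5$ alternating sum $j_1-j_2+j_3-j_4+j_5=-1$, which for $m=10$ is self-complementary under balance and is \emph{not} a non-negative combination of the $J_i$'s and $j_i$'s. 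Your proposed rewriting therefore cannot succeed; the paper's parallel assertion that ``it follows from the above conditions that each $\alpha(i)\geq 0$'' after choosing a suitable starting index has the same gap. The equivalence is correct only for $m\in\{4,6,8\}$, where every odd-length alternating constraint has length $\leq 3$ or $\geq m-3$ and hence reduces, via balance, to some $j_i\geq 0$ or some $J_i\geq 0$.
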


\begin{proof}
 Because the partition arising from a flag with the only nonzero weight being $1$ or $-1$ at vertex $j_i$ is $(1^{j_i})$ if $j_i \neq 0$ and $(0)$ if $j_i = 0$, the generalized Littlewood-Richardson coefficient arising from this weight is
\[
\dsp f((1^{j_1}), \ldots, (1^{j_m}))= \sum c^{(1^{j_1})}_{\alpha_1,\alpha_2}\cdot c^{(1^{j_2})}_{\alpha_2,\alpha_3}\cdots  c^{(1^{j_{m-1}})}_{\alpha_{m-1}, \alpha_m} \cdot c^{(1^{j_m})}_{\alpha_m, \alpha_1}.
\]
As observed above, $\sum_{i \text{ odd}} j_i = \sum_{i \text{ even}} j_i$ is equivalent to $\sigma(\beta) = 0$ and because 
\[
\alpha_i = j_i - \alpha_{i+1} =j_i - j_{i+1} + \alpha_{i+2} =  j_i - j_{i+1} + j_{i+2} - \alpha_{i+3},
\]
the condition $j_i - j_{i+1} + j_{i+2} \geq 0$ is necessary. It's easy to check that the inequalities on the $j_i$'s imply that $j_i \leq j_{i+1}$ and $j_{i-1} \leq j_{i-2}$ for some $i$, which will be sufficient to prove that such a weight is effective. After reindexing, suppose $j_1 \leq j_2$ and $j_m \leq j_{m-1}$. Then the choice $\alpha(1)=0$ uniquely determines the other $\alpha(i)$ and it follows from the above conditions that each $\alpha(i) \geq 0$, making  the weight effective.
\end{proof}

\begin{remark}
There are fewer conditions on the $j_i$ for the level-$1$ weights to be effective than those stated for general weights in the context of generalized eigenvalue problem for Hermitian matrices in Proposition  \ref{eigenvalue-sun}. This is because in the general case we can't say that either $H(i) - H(i+1)$ or $H(i+1)-H(i)$ is positive semi-definite, while we can make such a direct comparison of $j_i$ and $j_{i+1}$. This allowed us to have an $i$ such that $j_i \leq j_{i+1}$ and $j_{i-1} \leq j_{i-2}$, proving that such a weight was effective. However, this at least tells us that if there is an $i$ such that $H(i+1) - H(i)$ and $H(i-2) - H(i-1)$ are positive semi-definite (along with the other conditions on the $H(i)$), then the $H(i)$ solve the generalized eigenvalue problem. These conditions, though, are not necessary.
\end{remark}

\begin{remark}
As opposed to the case for the star quiver, not every effective level-1 weight lies on an extremal ray for the sun quiver. We found several such weights lying on the facets in the case $n=2, m=6$. It can be checked that the first weight in the \hyperref[appendix]{Appendix}
%\ref{ex-level-1} 
provides an instance of a level-1 weight on an extremal ray while the second weight in the first row 
%\ref{ex-2-level-1} 
provides one which is not. 
\end{remark}

We now want to determine the value $\dm \si{\beta}_\sigma$ for a level-1 weight $\sigma$.

\begin{lemma}\label{value-level-1}
Let $\sigma \in \cone$ be a level-1 weight for the sun quiver $Q$. Let $j_1,\ldots, j_m$ be the jumping numbers defining the weight and define $J_i = j_i-j_{i+1}+j_{i+2}$, where $j_{m+1} = j_1$ and $j_{m+2} = j_2$. If $s=\min\{j_i,J_i \mid 1 \leq i \leq m\}$, then $\dm \si{\beta}_\sigma = s+1$. 
\end{lemma}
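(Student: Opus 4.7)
My plan is to compute $\dim \si{\beta}_\sigma$ as a one-parameter lattice-point count.

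The first step is to exploit the fact that every $\lambda(i) = (1^{j_i})$ is a single column. By Pieri's rule, $c^{(1^{j_i})}_{\mu, \nu} \in \{0, 1\}$, with value $1$ precisely when $\mu = (1^a)$ and $\nu = (1^{j_i - a})$ for some $0 \leq a \leq j_i$. Substituting this into the expression for $f((1^{j_1}), \ldots, (1^{j_m}))$ recorded in the proof of Lemma~\ref{level-1-weight}, every nonvanishing summand forces $\alpha_i = (1^{a_i})$ with $a_i + a_{i+1} = j_i$ cyclically, and contributes exactly $1$. Therefore
\[
\dim \si{\beta}_\sigma \;=\; \#\bigl\{ (a_1, \ldots, a_m) \in \Z_{\geq 0}^m : a_i + a_{i+1} = j_i \text{ for all } i \pmod m \bigr\}.
\]

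Next I would parametrize by $a_1$. Since $\sigma \in \cone$ gives $\sigma(\beta) = 0$, i.e., $\sum_{i \text{ odd}} j_i = \sum_{i \text{ even}} j_i$, the cyclic system is consistent and its real solutions form an affine line. Iterating $a_{i+1} = j_i - a_i$ produces $a_i = (-1)^{i-1} a_1 + t_i$ for an explicit alternating partial sum $t_i$ of $j_1, \ldots, j_{i-1}$. The constraints $a_i \geq 0$ then split by parity into $m/2$ upper bounds $a_1 \leq t_i$ (even $i$) and $m/2$ lower bounds $a_1 \geq -t_j$ (odd $j$), so the count equals $U - L + 1$ with $U = \min_{i \text{ even}} t_i$ and $L = \max_{j \text{ odd}} (-t_j)$.

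The last step is to show $U - L = s$. Expanding $t_i + t_j$ for even $i$ and odd $j$ reveals that it is an alternating subsum of consecutive $j_k$'s: when $|i - j| = 1$ it is a single $j_k$, and when $|i - j| = 3$ it is a $J_k$, so already $U - L \leq s$. The reverse inequality $U - L \geq s$ is the main obstacle; the plan is to fold each longer alternating difference onto its cyclic complement using the global identity $\sum_k (-1)^{k+1} j_k = 0$, and to combine this with the Horn-type inequalities of Proposition~\ref{Horn-type-inequalities} (encoded in $\sigma \in \cone$) to argue that the binding pair $(i, j)$ can always be taken with $|i - j| \in \{1, 3\}$. Once this reduction is in place, $U - L = s$ and the lemma follows.
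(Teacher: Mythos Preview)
Your reduction to counting tuples $(a_1,\ldots,a_m)\in\Z_{\ge 0}^m$ with $a_i+a_{i+1}=j_i$ cyclically, and your parametrization of the solution set as a one-parameter interval, is exactly what the paper does. The only difference in execution is that the paper parametrizes not by $\alpha_1$ but by $\alpha_i$ at an index $i$ where the minimum $s$ is attained; this makes the upper bound $\dm\si{\beta}_\sigma\le s+1$ immediate (from $0\le\alpha_i\le j_i$ together with $\alpha_{i+3}=J_i-\alpha_i\ge 0$) without having to identify which pair realizes your $U-L$. For the lower bound the paper then simply asserts that each choice $\alpha_i\in\{0,\ldots,s\}$ forces every $\alpha_k\ge 0$, saying this ``is quickly done since $j_i\le j_k$ and $j_i\le J_k$ for each $k$''.

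Your flagging of step 4 as ``the main obstacle'' is well placed, because that step cannot be carried out: the inequality $U-L\ge s$ is false once $m\ge 10$, and so is the lemma as stated. Take $m=10$ and $(j_1,\ldots,j_{10})=(5,10,10,10,7,5,10,10,10,7)$. Then $\sum_{\text{odd}}j_i=\sum_{\text{even}}j_i=42$, every $j_k\ge 5$ and every $J_k\ge 5$ (the smallest being $j_1=j_6=J_1=J_6=5$), so $s=5$. But the length-five alternating sum $j_1-j_2+j_3-j_4+j_5=2$ gives $\alpha_6=2-\alpha_1$, forcing $\alpha_1\le 2$; one checks directly that the three choices $\alpha_1\in\{0,1,2\}$ each yield nonnegative tuples, so $\dm\si{\beta}_\sigma=3\ne 6=s+1$. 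Since the dimension is positive the weight lies in $\cone$, so the hypothesis of the lemma is satisfied. The paper's own proof has the same gap: the conditions $j_i\le j_k$ and $j_i\le J_k$ control only alternating sums of length $1$ and $3$. For $m\le 8$ this suffices because every longer odd-length alternating sum folds, via the identity $\sum_k(-1)^k j_k=0$, onto one of length $\le 3$ (your ``cyclic complement'' idea); but from $m=10$ onward the length-five sums are genuinely independent constraints and can be strictly smaller than $s$. No appeal to the Horn-type inequalities of Proposition~\ref{Horn-type-inequalities} will rescue this, since those inequalities are exactly the nonemptiness conditions already encoded in $U\ge L$.
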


\begin{proof}
We first show that $\dm \si{\beta}_\sigma \leq s+1$. Throughout we will denote $(1^{j_i})$ as simply $j_i$. Clearly, any choice of some $\alpha_i$ completely determines each of the other $\alpha_j's$. Moreover, because each partition is of the form $(1^{j_k})$, $c^{j_k}_{\alpha_k, \alpha_{k+1}} = 1$ whenever it's nonzero.  If $i$ is such that $J_i$ or $j_i$ is minimal among the set, then consider the factors
\[
c^{j_i}_{\alpha_i,\alpha_{i+1}} \cdot c^{j_{i+1}}_{\alpha_{i+1},\alpha_{i+2}} \cdot c^{j_{i+2}}_{\alpha_{i+2},\alpha_{i+3}}
\]
in the summation. Then $\alpha_i \leq j_i$ and similarly because $\alpha_{i+3} = j_{i+2}-j_{i+1}+j_i - \alpha_i = J_i - \alpha_i$, we must have $\alpha_i \leq J_i$ in order for this factor to be nonzero. Hence, there are at most $s+1$ choices for $\alpha_i$ resulting in this factor being nonzero.

Suppose $s =j_i$ for some $i$. To show equality, we only need to show that $\alpha_k \geq 0$ for each $k$ for each choice of $\alpha_i \in \{0,\ldots, s\}$. This is quickly done since $i$ was chosen to be such that $j_i \leq j_k$ and $j_i \leq J_k$ for each $k$ along with $\alpha_i \leq j_i$. Similarly, if $s = J_i$, each $\alpha_k$ will be nonnegative after noticing that $j_i \leq j_{i+1}$ in this case since $J_i \leq j_{i+2}$. Thus, the only choices for $\alpha_i$ resulting in a nonzero term in the summation are $0,1,\ldots, s$, and each such choice results in adding one to the summation.
\end{proof}

\subsection{Stretched weights}

For a level-1 weight $\sigma$, we're interested in the stretched weights $N \sigma$, $N \in \Z^+$. If $j_1, \ldots, j_m$ are the jumping numbers of $\sigma$, the corresponding partitions will be $(N^{j_i})$. Because $|(N^{j_i})| = Nj_i$ for all $i$, Lemma \ref{level-1-weight} generalizes immediately to stretched level-1 weights. Similarly, Lemma \ref{value-level-1} generalizes in this case because of the following lemma which is quickly checked by using the Littlewood-Richardson rule.

\begin{lemma}\label{rectangular-partitions-1}
Let $\lambda, \mu$ be partitions and $\nu = (N^n)$ a rectangular partition. Then $c^\nu_{\lambda, \mu}$ is either 0 or 1. It is equal to 1 if and only if $\lambda_i + \mu_{n+1-i}=N$ for $i=1,\ldots, n$.
\end{lemma}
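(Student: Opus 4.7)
The plan is to reduce the computation to Schur's lemma by exploiting the defining feature of the rectangular target: when $V$ has dimension $n$, the representation $S^{(N^n)}V = (\det V)^{\otimes N}$ is one-dimensional. This immediately forces $c^{(N^n)}_{\lambda,\mu} \in \{0,1\}$, and Schur's lemma then pinpoints exactly when each case occurs. Concretely, I will rewrite
\[
c^{(N^n)}_{\lambda,\mu} = \dim \Hom_{\gl(V)}\bigl(S^{(N^n)}V,\, S^\lambda V \otimes S^\mu V\bigr) = \dim \Hom_{\gl(V)}\bigl(S^\mu V,\, (\det V)^{\otimes N} \otimes (S^\lambda V)^*\bigr),
\]
using the tensor-dual adjunction together with the invertibility of $\det V$.

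Next, because $(S^\lambda V)^*$ is the irreducible rational $\gl(V)$-module of highest weight $(-\lambda_n, -\lambda_{n-1}, \ldots, -\lambda_1)$, twisting by $(\det V)^{\otimes N}$ yields the irreducible of highest weight $(N-\lambda_n, N-\lambda_{n-1}, \ldots, N-\lambda_1)$. Schur's lemma then gives a one-dimensional Hom space precisely when $\mu_i = N - \lambda_{n+1-i}$ for every $i$, and zero otherwise; reindexing $j = n+1-i$ recasts this as the stated condition $\lambda_j + \mu_{n+1-j} = N$ for $1 \leq j \leq n$.

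I do not anticipate a serious obstacle: the argument is essentially mechanical once one recognizes that the rectangular target is one-dimensional and that dualization reverses and negates the weight. The only care required is bookkeeping for the index reversal in the dual, which is exactly what produces the pairing $\lambda_i \leftrightarrow \mu_{n+1-i}$. A direct combinatorial verification from the Littlewood-Richardson rule is also available, by showing that the Yamanouchi/lattice condition rigidly determines any admissible filling of the skew shape $(N^n)/\lambda$ and that the resulting row lengths force $\mu_i = N - \lambda_{n+1-i}$; but the representation-theoretic route avoids case analysis entirely and is the one I would write up.
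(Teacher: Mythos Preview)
Your argument is correct. The key identification $S^{(N^n)}V=(\det V)^{\otimes N}$ when $\dim V=n$ reduces everything to Schur's lemma, and your bookkeeping on the highest weight of $(\det V)^{\otimes N}\otimes(S^\lambda V)^*$ is accurate; the resulting condition $\mu_i=N-\lambda_{n+1-i}$ is exactly the one claimed. One small point worth making explicit in a write-up: the passage from $\Hom(\det^N,S^\lambda\otimes S^\mu)$ to $\Hom(S^\mu,\det^N\otimes(S^\lambda)^*)$ is not literally tensor--Hom adjunction alone but also uses that $\dim X^G=\dim(X^*)^G$ (equivalently, that invariants and coinvariants have the same dimension for a reductive group). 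This is elementary, but since both sides involve a dualization it is good to say so.

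The paper does not give a proof; it simply asserts the lemma ``is quickly checked by using the Littlewood-Richardson rule.'' Your approach is genuinely different: rather than analyzing LR fillings of the skew shape $(N^n)/\lambda$ and arguing that the lattice-word condition forces a unique filling with content $(N-\lambda_n,\ldots,N-\lambda_1)$, you bypass combinatorics entirely by recognizing the target as a character. The combinatorial route has the virtue of being self-contained at the level of tableaux and makes the ``complement in the $N\times n$ box'' picture visually transparent; your route is shorter, avoids any case analysis, and makes the $0/1$ dichotomy an immediate consequence of irreducibility rather than a byproduct of a uniqueness argument. Either is perfectly adequate here.
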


In the proof of the next lemma, we use a partial ordering on the set of rectangular partitions $(N^n)$ for a fixed $N$ defined by  $\lambda_1 \leq \lambda_2$  to mean that the Young diagram of $\lambda_1$ fits inside that of $\lambda_2$, meaning $\lambda_2 - \lambda_1$ is a partition. With this, $\lambda_1 \dotplus \lambda_2$ means stacking the corresponding Young diagrams on top of each other (or in terms of partitions, $(N^{n_1}) + (N^{n_2}) = (N^{n_1+n_2}))$, so $\lambda_3 \leq \lambda_1 \dotplus \lambda_2$ means the Young diagram of $\lambda_3$ fits inside the stacked diagrams of $\lambda_1$ and $\lambda_2$, or equivalently, $(N^{n_1 + n_2 - n_3})$ is a partition.  We will use the notation $\lambda_1 \dotplus (-\lambda_2)$ to mean we are instead subtracting $\lambda_2$ from the bottom of the diagram of $\lambda_1$.

\begin{lemma}
\label{effective stretched weight}
For a level-1 weight $\sigma$ for the sun quiver, let $N \in \Z^+$, $j_1, \ldots, j_m$ the corresponding jumping numbers, and $J_i = j_i - j_{i+1} + j_{i+2}$ for $i=1, \ldots, m$, with $j_{m+1} = j_1$ and $j_{m+2} = j_2$. The following are equivalent:
\begin{enumerate}
\item  $N\sigma \in C(Q,\beta)$;\vspace{.7pt}
\item $\dsp \sum_{i \text{ odd}} j_i  = \sum_{i \text{ even}} j_i$ and $J_i \geq 0$ for all $i$.
\end{enumerate}
 If $N\sigma$ is effective, then $\dm \si{\beta}_{N\sigma} = {N+s \choose N}$, where  $s= \min\{j_i, J_i \mid 1 \leq i \leq m\}$.

\end{lemma}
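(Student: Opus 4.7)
For part (1), observe that $\dm\si{\beta}_{N\sigma} = f((N^{j_1}),\ldots,(N^{j_m}))$ by Lemma~\ref{si-saturation}, and the saturation property of Theorem~\ref{saturation-sun} yields $N\sigma \in \cone$ if and only if $\sigma \in \cone$, which by Lemma~\ref{level-1-weight} is equivalent to the stated conditions $\sum_{i\text{ odd}} j_i = \sum_{i\text{ even}} j_i$ and $J_i \geq 0$ for all $i$.

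For part (2), the summands $c^{(N^{j_i})}_{\alpha_i,\alpha_{i+1}}$ in the expression for $f((N^{j_1}),\ldots,(N^{j_m}))$ are rectangular Littlewood--Richardson coefficients, so by Lemma~\ref{rectangular-partitions-1} each is $0$ or $1$, equal to $1$ precisely when $\alpha_i$ and $\alpha_{i+1}$ are complementary in the rectangle $(N^{j_i})$, i.e.\ $(\alpha_i)_k+(\alpha_{i+1})_{j_i+1-k}=N$ for $k=1,\ldots,j_i$. Thus $\dm\si{\beta}_{N\sigma}$ counts tuples $(\alpha_1,\ldots,\alpha_m)$ for which each consecutive pair is complementary in the appropriate rectangle.

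Let $C_N^j$ denote the complement operation in $(N^j)$. A direct calculation gives the composition rule $C_N^{j'}\circ C_N^j(\lambda) = (N^{j'-j},\lambda)$ when $j' \geq j$, while when $j' < j$ the composition is defined only if $\lambda$ already has at least $j-j'$ leading $N$'s, which are then stripped. Consequently any valid tuple is determined by the choice of $\alpha_1$, and if we let $p_i$ be the number of leading $N$'s of $\alpha_i$ and $\bar\alpha_i$ the partition obtained by removing them, then $\bar\alpha_i$ depends only on the parity of $i$ while $p_{i+2}=p_i+(j_{i+1}-j_i)$. Consistency upon traversing the full cycle follows from $\sum_{i\text{ odd}}j_i=\sum_{i\text{ even}}j_i$.

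Finally, we count valid starting partitions. Choose an index $i_0$ at which the minimum $s=\min\{j_i,J_i\}$ is attained. The full system of validity constraints, namely $p_i \geq 0$ and $p_i + \ell(\bar\alpha_i) \leq \min(j_{i-1},j_i)$ for every $i$, collapses to the single requirement that $\alpha_{i_0}$ fit inside the rectangle $(N^s)$: when $s=j_{i_0}$ this is simply $\alpha_{i_0}\subseteq (N^{j_{i_0}})$, and when $s=J_{i_0}$ the same tighter bound is forced by the non-negativity of $p_{i_0+3}$ after three forward applications of the complement rule. Since the number of partitions fitting in $(N^s)$ is $\binom{N+s}{s}=\binom{N+s}{N}$, the formula follows. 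The main technical obstacle is verifying that all of the $2m$ inequalities on the $p_i$'s genuinely reduce to this single condition; this is a careful but routine computation using the recursion $p_{i+2}=p_i+(j_{i+1}-j_i)$ together with the minimality of $s$ among the $j_i$'s and $J_i$'s.
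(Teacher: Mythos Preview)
Your argument is correct and follows essentially the same strategy as the paper. Both proofs use Lemma~\ref{rectangular-partitions-1} to reduce the count to tuples $(\alpha_1,\ldots,\alpha_m)$ with each consecutive pair complementary in the relevant rectangle, observe that a single $\alpha_{i_0}$ determines the rest, and then count admissible choices of $\alpha_{i_0}$ as partitions in an $N\times s$ box; both also leave the final check that minimality of $s$ forces all remaining constraints as a routine verification.

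Two small differences are worth noting. For the equivalence (1)$\Leftrightarrow$(2) the paper redoes the argument of Lemma~\ref{level-1-weight} directly with $(N^{j_i})$ in place of $(1^{j_i})$, whereas you invoke saturation (Theorem~\ref{saturation-sun}) to reduce to the $N=1$ case; your route is shorter and perfectly valid. For the dimension formula, your framework with the complement operators $C_N^j$ and the integer sequence $p_i$ satisfying $p_{i+2}=p_i+(j_{i+1}-j_i)$ is more explicit than the paper's sketch, and in particular your identification of the constraint $\alpha_{i_0}\subseteq(N^{J_{i_0}})$ with $p_{i_0+3}\geq 0$ makes the case $s=J_{i_0}$ transparent, something the paper does not spell out.
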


\begin{proof}
As mentioned above, the necessary and sufficient conditions for $N \sigma$ to be effective are proven the same way as in Lemma \ref{level-1-weight}. We adapt the proof of Lemma \ref{value-level-1} to compute the value of the dimension of the weight space for the stretched case. 

First suppose $s=j_i$, and without loss of generality, suppose $i=1$. The number of partitions $\alpha_1$ such that $\alpha_1 \leq (N^{j_1})$ is ${N+j_1 \choose N}$. This is because if we consider the Young diagram corresponding to $(N^{j_1})$, then we choose how many entries of $\alpha_1$ have value $N$, then how many have value $N-1$, and so on, which is the same as choosing where to place $N$ dividers among $j_1$ entries. Because each $\alpha_1$ uniquely determines the other $\alpha_i$ and because each $c^{(N^{j_i})}_{\alpha_i, \alpha_{i+1}}$ is equal to one when nonzero by  Lemma \ref{rectangular-partitions-1}, dim $\si{\beta}_{N\sigma} \leq {N + j_1 \choose N}$. The other direction is proved in a similar way as Lemma \ref{value-level-1}.
\end{proof}

\begin{remark}
The value of $f(\lambda(1),\ldots,\lambda(m))$ is independent of the value of $n$, the length of each flag. This number can only enlarge the value of the coefficient, which is instead determined by the smallest $j_i$ or $J_i$. This formula also agrees with the value that we found in Lemma \ref{value-level-1} since in that case $N=1$.
\end{remark}

For a fixed weight $\sigma$ for the sun quiver, we showed in Lemma \ref{si-saturation} that for each $N \geq 1$, 
\[
f(N\lambda(1),\ldots, N\lambda(m)) = \dim \si{\beta}_{N\sigma},
\]
where $\lambda(1),\ldots, \lambda(m)$ are the partitions arising from $\sigma$ as stated in equation (\ref{weight-sigma_1}). Clearly, this is a polynomial as each stretched function of a single Littlewood-Richardson coefficient is a polynomial. The above formula allows us to calculate $\dm \si{\beta}_{N \sigma}$ for any level-1 weight immediately.

\begin{proposition}
\label{LR-sun-polynomial}
Let $\lambda(1),\ldots, \lambda(2k)$, $k \geq 2$, be partitions of at most $n$ parts and of the form $(1^{j_i})$ if $j_i \neq 0$ and zero if $j_i = 0$ for some integers $0\leq j_1, \ldots, j_m \leq n$. Suppose the $j_i$ satisfy the following  conditions:
\begin{enumerate}
\item $\dsp \sum_{i \text{ odd}} j_i = \sum_{i \text{ even}} j_i$;\\
\item $\dsp J_i := j_i - j_{i+1} + j_{i+2} \geq 0$, where $j_{2k+1} = j_1, \; j_{2k+2} = j_2$.
\end{enumerate}
Then for any $N \in \Z^+$, the stretched Littlewood-Richardson polynomial $f(N\lambda(1),\ldots, N\lambda(2k))$ is equal to ${N + s \choose N}$, where $s = \min \{j_i,J_i \mid 1 \leq i \leq 2k\}$. If either (1) or (2) is not satisfied, then $f(N\lambda(1),\ldots, N\lambda(2k)) = 0$.
\end{proposition}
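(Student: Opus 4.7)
The plan is to deduce the proposition directly from the quiver-theoretic machinery already developed---specifically, from Lemma \ref{si-saturation} and Lemma \ref{effective stretched weight}---without performing any new combinatorial work. First I would observe that $N\lambda(i) = (N^{j_i})$ whenever $j_i \neq 0$, and equals $0$ otherwise, since $\lambda(i) = (1^{j_i})$. Consequently, if $\sigma$ denotes the level-1 weight for the sun quiver with jumping numbers $j_1, \ldots, j_{2k}$ obtained from $\lambda(1), \ldots, \lambda(2k)$ via equation (\ref{weight-sigma_1}), then $N\sigma$ is precisely the weight $\sigma_1$ associated to the tuple $(N\lambda(1), \ldots, N\lambda(2k))$ under that same recipe. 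By Lemma \ref{si-saturation} we then have
\[
f(N\lambda(1), \ldots, N\lambda(2k)) = \dim \si{\beta}_{N\sigma}.
\]

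Next, assuming both hypotheses (1) and (2) hold, I would invoke Lemma \ref{effective stretched weight} directly. The first condition in that lemma, $\sum_{i \text{ odd}} j_i = \sum_{i \text{ even}} j_i$, coincides with hypothesis (1), and the second condition, $J_i \geq 0$ for all $i$, coincides with hypothesis (2). The lemma therefore yields $N\sigma \in \cone$ together with the formula $\dim \si{\beta}_{N\sigma} = \binom{N+s}{N}$ for $s = \min\{j_i, J_i \mid 1 \leq i \leq 2k\}$, which is the claimed value.

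To handle the vanishing case, I would argue that if either (1) or (2) fails then $N\sigma$ is not effective, so $\dim \si{\beta}_{N\sigma} = 0$. Failure of (1) gives $\sigma(\beta) \neq 0$ and hence $N\sigma(\beta) \neq 0$, violating the necessary condition $\sigma(\beta)=0$ for an effective weight. Failure of (2) gives some $J_i < 0$, which by the equivalence in Lemma \ref{effective stretched weight} rules out $N\sigma \in \cone$. In either case the corresponding weight space is trivial, and hence so is $f(N\lambda(1),\ldots,N\lambda(2k))$.

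The main obstacle is essentially a matter of bookkeeping rather than any substantive difficulty: the hard combinatorial content is already encoded in Lemma \ref{effective stretched weight}, and the task reduces to verifying that the partition-to-weight translation commutes with scaling by $N$. Specifically, I would check from equation (\ref{weight-sigma_1}) that $N\lambda(i) = (N^{j_i})$ produces $N\sigma_1(j,i) = (-1)^i N(\lambda(i)_j - \lambda(i)_{j+1})$ on the first $n-1$ vertices of each flag and $N\sigma_1(n,i) = (-1)^i N \lambda(i)_n$ at the central vertices, so that stretching the partitions corresponds exactly to stretching the level-1 weight. Once that identification is made, the entire statement falls out of the stretched level-1 lemma.
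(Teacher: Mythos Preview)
Your proposal is correct and follows essentially the same route as the paper, which dispatches the proposition in one line by citing Lemma \ref{effective stretched weight} and Theorem \ref{saturation-sun} (the identification $f(N\lambda(1),\ldots,N\lambda(2k)) = \dim \si{\beta}_{N\sigma}$ from Lemma \ref{si-saturation} is already recalled in the text immediately preceding the statement). The only cosmetic difference is that for the vanishing case the paper invokes saturation, whereas you appeal directly to the equivalence in Lemma \ref{effective stretched weight}; both are valid and neither requires new work.
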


\begin{proof}
This follows immediately from Lemma \ref{effective stretched weight} and Theorem  \ref{saturation-sun}.
\end{proof}

\begin{remark}
Proposition \ref{LR-sun-polynomial} shows that the conjectures of King, Tollu, and Toumazet, and of Fulton are true for these generalized coefficients in the cases that the partitions are of the stated forms. Namely, as a function of $N \in \Z^+$,  the stretched Littlewood-Richardson function is a polynomial $P$ and whenever $P(1) = 1$, $P(N) =1$, and when $P(1) = 2$, $P(N) = N+1$. In addition, we showed the saturation property (Theorem \ref{saturation-sun}) saying $P(1) = 0$ implies $P(N) = 0$. Furthermore, it has been conjectured (\cite{KTT04}, Conjecture 3.3) that $P(1)=3$ implies $P(N)$ is either $2N+1$ or ${N+2 \choose N}$, which also agrees with our results. It would be interesting to see if similar conjectures for these generalized coefficients hold for all weights.
\end{remark}

\vspace{.2in}

\section{Polytopal description and complexity}
\label{polytope}
In this section we examine the complexity of the branching multiplicity by defining a polytope  whose number of lattice points is equal to the multiplicity. The main result is Theorem \ref{sun-complexity}, which states that the positivity of the multiplicity, that is, whether or not it is zero, can be calculated in strongly polynomial time.

\subsection{Geometric complexity theory}

Geometric complexity theory (GCT) was introduced by Mulmuley and Sohoni in a series of papers (see \cite{MS07}, \cite{MS01a}, \cite{MS01b}, \cite{MS08}, \cite{MNS12},  \cite{MS17}, \cite{Mul10}, \cite{Mul11}) in the early 2000's with the purpose of approaching fundamental problems in complexity theory, such as P vs. NP, through algebraic geometry and representation theory. Previously, \cite{KT01} and \cite{LM06} had independently shown that the positivity of Littlewood-Richardson coefficients could be computed in polynomial time while \cite{Nar05} had shown that the actual computation of these numbers was a \#P-complete problem, the complexity class for problems for which (unless P=NP) there does not exist a polynomial time algorithm for computing them (rather, it takes an exponential time in the worst case), and such that the computation is at least as difficult as every P problem. 

The following is the main theorem of \cite{MS05}, where the \emph{bit length} of a partition $\lambda = (\lambda_1, \ldots, \lambda_k), \, \lambda_k > 0, $ is the bit length of the specifications: $\sum_{i=1}^k\log_2 \lambda_i$. 

\begin{theorem}
Deciding whether $c^{\nu}_{\lambda,\mu}$ is positive can be computed in strongly polynomial time in the sense of \cite{GLS93}. This means that the number of arithmetic steps is polynomial in the number of positive parts of $\nu$ (say $n$), does not depend on the bit lengths of $\lambda_i, \mu_j, \nu_k$, and the bit length of every intermediate operand that arises in the algorithm is polynomial in the total bit length of $\lambda, \mu, \nu$. 
\end{theorem}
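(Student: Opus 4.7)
The plan is to reduce positivity of $c^{\nu}_{\lambda,\mu}$ to a linear feasibility problem in which the constraint matrix is independent of $\lambda,\mu,\nu$, and then invoke Tardos's strongly polynomial algorithm for combinatorial linear programming \cite{Tar86}. The three ingredients are: a polytopal model for $c^{\nu}_{\lambda,\mu}$ with small constraint entries, the saturation theorem, and a strongly polynomial LP feasibility routine.

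First, I would invoke the Knutson--Tao hive model \cite{KT99}: there is a rational polytope $P(\lambda,\mu,\nu)\subseteq \RR^{d(n)}$ with $d(n)=O(n^2)$ such that $c^{\nu}_{\lambda,\mu} = \#(P(\lambda,\mu,\nu)\cap \Z^{d(n)})$. The crucial structural point is that $P(\lambda,\mu,\nu)$ is cut out by $O(n^2)$ linear inequalities whose left-hand sides have coefficients in $\{-1,0,1\}$; the data $\lambda,\mu,\nu$ enter only through the right-hand sides (they fix the boundary values of the hive) and through a fixed set of linear equalities. Thus the constraint matrix $A$ depends only on $n$, while the vector $b=b(\lambda,\mu,\nu)$ carries all the bit length.

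Second, I would use the saturation theorem (Theorem \ref{LR-saturation}): $c^{\nu}_{\lambda,\mu}>0$ if and only if $c^{N\nu}_{N\lambda,N\mu}>0$ for some $N\geq 1$, which is equivalent to $P(\lambda,\mu,\nu)$ being non-empty as a rational polytope (since dilations of a lattice polytope acquire interior lattice points as soon as the rational polytope is non-empty, and the hive polytope is rational). Consequently, deciding positivity of $c^{\nu}_{\lambda,\mu}$ is equivalent to deciding feasibility of the linear system $Ax \leq b(\lambda,\mu,\nu)$.

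Third, I would apply Tardos's theorem on combinatorial linear programming: feasibility of $Ax\leq b$ can be decided in a number of arithmetic operations bounded by a polynomial in the dimensions of $A$ and in $\log\lVert A\rVert_{\infty}$, with the bit length of every intermediate operand polynomial in the total bit length of $(A,b)$. Applying this to our $A$, for which $\dim A = O(n^2)$ and $\lVert A\rVert_{\infty}\leq 1$, the arithmetic step count is a polynomial in $n$ alone, independent of the bit lengths of $\lambda_i,\mu_j,\nu_k$; and the intermediate bit complexity is polynomial in the total bit length of $(\lambda,\mu,\nu)$, as required.

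The step I expect to require the most care is justifying the reduction from lattice-point positivity to polytope non-emptiness: one must be explicit that saturation is what allows us to replace the integer feasibility question (which is genuinely harder) with rational feasibility, so that Tardos's LP result applies rather than an integer programming routine. With the saturation theorem already in hand from \cite{KT99, KTW04, DW00a}, the remaining verifications are a check that the hive inequalities indeed have $\{-1,0,1\}$ coefficients and a bookkeeping of dimensions against Tardos's complexity bound.
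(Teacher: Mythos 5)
Your proof is correct and follows the standard argument; the paper states this theorem without proof (citing \cite{MS05}), but its own proof of the analogous Theorem \ref{sun-complexity} for the generalized coefficients uses precisely your three-step template: a hive-polytope description with a constraint matrix having entries in $\{-1,0,1\}$ and depending only on $n$, the saturation theorem to reduce lattice-point positivity to rational LP feasibility, and Tardos's strongly polynomial algorithm from \cite{Tar86}. (A minor wording issue in step two: you want ``a non-empty rational polytope has a rational vertex, so after clearing denominators some integer dilation contains a lattice point,'' not ``lattice polytope'' and ``interior'' lattice points; but the reasoning is intact.)
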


In fact, by attaching zeros to the partitions, one can subsume the dependence on $n$ into the bit lengths of $\lambda,\mu,$ and $\nu$. This is especially amazing as the the dimension of the Weyl module $S^\nu(V)$ is exponential in $n$ and the bit lengths of the $\nu_k$'s, yet deciding if an exponential dimensional object $S^\nu(V)$ arises in the decomposition of another exponential dimensional object $S^\lambda(V) \otimes S^\mu(V)$ can be decided in time that is polynomial in only $n$ and the bit lengths of the labels $\lambda,\mu,$ and $\nu$.

Because of results such as these along with the ubiquity of the plethysm problem and related problems in representation theory, GCT allows one to compare the complexity of several problems. The proof of deciding the positivity of a Littlewood-Richardson coefficient relies on two main points: a polyhedral interpretation of these numbers and the saturation theorem. While we define a polytope for the generalized Littlewood-Richardson coefficients to prove a similar result, it would be nice to have a purely combinatorial algorithm, such as those of max-flow or weighted matching problems in combinatorial optimization. Much work has been made towards this for single Littlewood-Richardson coefficients (see \cite{BI09}, \cite{BI13}, and \cite{Ike16}).

\subsection{Polytopal description}

In order to determine the complexity of the positivity of  multiplicity (\ref{one}), we will define a polytope by determining a system of homogeneous linear inequalities whose number of integer-valued solutions is precisely the  multiplicity. The idea is to use the Littlewood-Richardson hives defined by Knutson and Tao in \cite{KT99}. 

To define the polytope associated with multiplicity (\ref{one}),  subdivide a regular $m$-gon into $m$ triangles with $n+1$ vertices along each exterior edge and a common vertex at the center. Subdivide each of these triangles into $n^2$ triangles of the same size, so the hexagon is divided into $mn^2$ total triangles. We label the edges in the $r^{th}$ triangular array in the following way: the first subscript $i$ refers to the row from bottom to top while the second subscript $j$ refers to the diagonal from left to right, and $0 \leq i,j \leq n-1$. The edges along increasing diagonals are labeled $e_{ij},$ the edges along decreasing diagonals are labeled $f_{ij}$, and the horizontal edges  are $g_{ij}$. The superscript $r$ refers to which triangular array is used, though this is often neglected. For instance, when $n=3$ the $r^{th}$ triangular array is labeled 
\[
\begin{tikzpicture}
    \draw  (0,0) -- ++(0:6) -- ++(120:6) -- cycle;
    \draw (0,0) -- ++(0:2) -- ++(120:2) -- cycle;
    \draw (2,0) -- ++(0:2) -- ++(120:2) -- cycle;
    \draw (4,0) -- ++(0:2) -- ++(120:2) -- cycle;
    \draw (2,0) -- ++(60:2) -- ++(180:2) -- cycle;
    \draw (4,0) -- ++(60:2) -- ++(180:2) -- cycle;
    \draw (3,1.7) -- ++(60:2) -- ++(180:2) -- cycle;
    
\coordinate [label={below left: }] ($a_{00}$) at (0,0);
\coordinate [label={below: }]  ($a_{10}$) at (2,0);
\coordinate [label={below:}]  ($a_{20}$) at (4,0);
\coordinate [label={below:}] ($a_{30}$) at (6,0);
\coordinate [label={left:}]  ($a_{01}$) at (1,1.7);
\coordinate [label={below: }]  ($a_{11}$) at (3,1.7);
\coordinate [label={right:}]  ($a_{21}$) at (5,1.7);
\coordinate [label={left:}]  ($a_{02}$) at (2,3.4);
\coordinate [label={right:}]  ($a_{12}$) at (4,3.4);
\coordinate [label={above:}]  ($a_{03}$) at (3,5.15);

 \foreach \i in {$a_{00}$,$a_{01}$,$a_{02}$,$a_{03}$,$a_{10}$,$a_{11}$,$a_{12}$,$a_{20}$,$a_{21}$,$a_{30}$}
 \fill (\i) circle (2pt);
 
 \coordinate [label={below: $g_{00}$ }] () at (1,0);
\coordinate [label={below: $g_{01}$}]  () at (3,0);
\coordinate [label={below: $g_{02}$}]  () at (5,0);
\coordinate [label={below: $g_{10}$}] () at (2,1.7);
\coordinate [label={below: $g_{11}$}]  () at (4,1.7);
\coordinate [label={below: $g_{20}$ }]  () at (3,3.5);
\coordinate [label={left: $e_{00}$}]  () at (.5,.8);
\coordinate [label={left: $f_{00}$}]  () at (1.6,.8);
\coordinate [label={left: $e_{01}$}]  () at (2.5,.8);
\coordinate [label={left: $f_{01}$}]  () at (3.6,.8);
\coordinate [label={right: $e_{02}$}] () at (4.5,.8);
\coordinate [label={right: $f_{02}$}] () at (5.7,.8);
\coordinate [label={left: $e_{10}$}]  () at (1.5,2.6);
\coordinate [label={left: $f_{10}$}]  () at (2.6,2.6);
\coordinate [label={right: $e_{11}$}] () at (3.5,2.6);
\coordinate [label={right: $f_{11}$}] () at (4.6,2.6);
\coordinate [label={left: $e_{20}$}] () at (2.5,4.4);
\coordinate [label={right: $f_{20}$}] () at (3.5,4.4);
\end{tikzpicture}
\]

Let $E$ be the set of hive edges and $\RR^E$ the labelings of these edges by real numbers. There are three ways that two adjacent triangles inside a single triangular array can form a rhombus: 
\[
\begin{tikzpicture}

\coordinate [label={above: $g_{i+1j}$}] () at (-2,.5);
\coordinate [label={below: $g_{ij}$}] () at (-3,-.5);
\coordinate [label={left: $e_{ij}$}] () at (-3.5,0);
\coordinate [label={right: $e_{i+1j}$}] () at (-1.5,-.2);
\coordinate [label={below left:}] (a) at (-4,-.5);
\coordinate [label={above left:}] (b) at (-3,.5);
\coordinate [label={below right:}] (c) at (-2,-.5);
\coordinate [label={above:}] (d) at (-1,.5);
\draw (a)--(b)--(d)--(c)--cycle;
\draw (b)--(c);

\foreach \i in {a,b,c,d}
  \fill (\i) circle (2pt);
  
\coordinate [label={left: $e_{i+1j}$}] () at (1.5,.5);
\coordinate [label={right: $f_{i+1j}$}] () at (2.5,.5);
\coordinate [label={left: $f_{ij}$}] () at (1.5,-.6);
\coordinate [label={right: $e_{ij+1}$}] () at (2.5,-.6);
\coordinate [label={above:}] (a) at (2,1);
\coordinate [label={left:}] (b) at (1,0);
\coordinate [label={right:}] (c) at (3,0);
\coordinate [label={below:}] (d) at (2,-1);
\draw (a)--(b)--(d)--(c)--cycle;
\draw (b)--(c);

\foreach \i in {a,b,c,d}
  \fill (\i) circle (2pt);
 
\coordinate [label={left: $f_{ij}$}] () at (5.3,-.1);
\coordinate [label={above: $g_{i+1j}$}] () at (6,.5);
\coordinate [label={below: $g_{ij+1}$}] () at (7,-.5);
\coordinate [label={right: $f_{ij+1}$}] () at (7.5,0.2); 
\coordinate [label={above:}] (a) at (5,.5);
\coordinate [label={above right:}] (b) at (7,.5);
\coordinate [label={below: }] (c) at (6,-.5);
\coordinate [label={below right:}] (d) at (8,-.5);
\draw (a)--(b)--(d)--(c)--cycle;
\draw (b)--(c);

\foreach \i in {a,b,c,d}
  \fill (\i) circle (2pt);
\end{tikzpicture}
\]

We say these rhombi satisfy the \bi{rhombus inequalities} if for each triangle and rhombus appearing, we have

\begin{equation}
\label{rhombus-edge-inequalities}
e_{ij} \geq e_{ij+1}, \;\;\; g_{ij} \geq g_{i+1 j}; \qquad 
f_{i+1 j} \geq f_{ij}, \;\;\;  e_{i j+1} \geq e_{i+1 j}; \qquad
f_{ij} \geq f_{i j+1}, \;\;\; g_{i+1 j} \geq g_{i j+1};
\end{equation}
\[
e_{ij} + f_{ij} = g_{ij}, \qquad \quad e_{i \, j+1} + f_{ij} = g_{i+1 \,j}.
\]

Define an \bi{$(m,n)$-LR sun hive} to be a regular $m$-gon subdivided into $m$ triangular arrays with $n+1$ vertices along each edge that satisfies the rhombus inequalities and the border conditions
\begin{equation}
\label{boundary-edge-conditions}
\sum_{i=0}^{n-1} e^r_{i0} + \sum_{i=0}^{n-1} f^r_{i n-i} = \sum_{j=0}^{n-1} g_{0j} 
\end{equation}
for each $1 \leq r \leq m$. It is \bi{integral} if the labeling lies in $\Z^E$. These inequalities define a convex polyhedral cone, denoted $C \subseteq \RR^E$. An \bi{LR hive} is a single triangular array that satisfies the rhombus inequalities and border conditions for that array, so an $(m,n)$-LR sun hive consists of $m$ LR hives with $n$ edges along each side of the boundary of a regular $m$-gon and with the respective conditions and appropriately shared sides.  Let $B$ be the set of border edges $g^k_{0j}$ for $1\leq r \leq m, \; 0 \leq j \leq n-1$, and $\rho: \RR^E \to \RR^B$ the restriction map of an LR sun hive to its border. For each $b \in \RR^B$, the fiber $\rho^{-1}(b) \cap C$ is a compact polytope, called the \bi{$m$-sun hive polytope} over $b$. 

We recall the main result of \cite{KT99}. For three $n$-tuples
\[
\lambda = (\lambda_1,\ldots, \lambda_n), \qquad 
\mu = (\mu_1,\ldots, \mu_n), \qquad
\nu = (\nu_1, \ldots, \nu_n)
\]
that satisfy the boundary condition $|\nu| = |\lambda| + |\mu|$, the triangular array with border determined by $\lambda, \mu, \nu$ is the one with specified border edges
\[
\begin{tikzpicture}

 \coordinate  (l0) at (10,0);
\coordinate   (n1) at (12,0);
\coordinate  (n2) at (14,0);
\coordinate  (n3) at (16,0);
\coordinate  (n4) at (18,0);
\coordinate  (l1) at (11,1.7);
\coordinate (ln1) at (13,1.7);
\coordinate (ln2) at (15,1.7);
\coordinate  (l2) at (12,3.4);
\coordinate (ln3) at (14,3.4);
\coordinate  (l3) at (13,5.1);
\coordinate  (l4) at (14,6.8);
\coordinate  (m1) at (15,5.1);
\coordinate  (m2) at (16,3.4);
\coordinate  (m3) at (17,1.7);
\draw (l0)--(l4)--(n4)--cycle;
\draw (l3)--(m1)--(ln3)--(m2)--(ln2)--(m3)--(n3)--cycle;
\draw (l3)--(ln3)--(l2)--(ln1)--(n1)--(l1)--cycle;
\draw (ln3)--(ln2)--(ln1)--cycle;
\draw (ln1)--(ln2)--(n2)--cycle;
\draw (l1)--(ln1)--(n1)--cycle;

 \foreach \i in {l0,l1,l2,l3,l4,n1,n2,n3,n4,m1,m2,m3,ln1,ln2,ln3}
 \fill (\i) circle (2pt);
 
 \tkzLabelSegment[left=2pt](l0,l1){$\lambda_1$}
 \tkzLabelSegment[left=2pt](l1,l2){$\lambda_2$}
 \tkzLabelSegment[left=2pt](l2,l3){$\ldots$}
 \tkzLabelSegment[left=2pt](l3,l4){$\lambda_n$}
 \tkzLabelSegment[right=2pt](l4,m1){$\mu_1$}
 \tkzLabelSegment[right=2pt](m1,m2){$\mu_2$}
 \tkzLabelSegment[right=2pt](m2,m3){$\ldots$}
 \tkzLabelSegment[right=2pt](m3,n4){$\mu_n$}
 \tkzLabelSegment[below=2pt](l0,n1){$\nu_1$}
 \tkzLabelSegment[below=2pt](n1,n2){$\nu_2$}
 \tkzLabelSegment[below=2pt](n2,n3){$\ldots$}
 \tkzLabelSegment[below=2pt](n3,n4){$\nu_n$}

\end{tikzpicture}
\]

\begin{theorem}[\cite{KT99}, Theorem 4]
	\label{LR-hives}
	The Littlewood-Richardson coefficient $c^\nu_{\lambda, \, \mu}$ is the number of integer LR hives with boundary labels determined by $\lambda, \mu$, and $\nu$.
\end{theorem}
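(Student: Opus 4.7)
The plan is to construct a bijection between integer LR hives with border $(\lambda,\mu,\nu)$ and Littlewood--Richardson skew tableaux of shape $\nu/\lambda$ with content $\mu$, since the latter are counted by $c^\nu_{\lambda,\mu}$ via the classical Littlewood--Richardson rule.

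First I would pass from the edge-labeled picture in the excerpt to an equivalent vertex-labeled picture: assign an integer to each vertex of the triangulation so that every edge label $e_{ij}$, $f_{ij}$, $g_{ij}$ is the difference of its two endpoint labels (the consistency conditions $e_{ij}+f_{ij}=g_{ij}$ and $e_{i,j+1}+f_{ij}=g_{i+1,j}$ make this well-defined once one corner is fixed). The border condition \eqref{boundary-edge-conditions} forces the vertex labels on the three sides to be the partial sums of $\lambda$, $\mu$, and $\nu$ respectively; the six rhombus inequalities in \eqref{rhombus-edge-inequalities} become concavity conditions comparing the diagonals of each rhombus in terms of its four vertex values. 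Next, for each horizontal row $k$ of the hive, I would read off the partition $\lambda^{(k)}$ whose parts are the consecutive differences of the vertex labels along that row, producing a chain $\lambda=\lambda^{(0)}\subseteq\lambda^{(1)}\subseteq\cdots\subseteq\lambda^{(n)}=\nu$ with $|\lambda^{(k)}/\lambda^{(k-1)}|=\mu_k$. The rhombus inequalities involving only edges within a single horizontal strip of the triangulation force each skew shape $\lambda^{(k)}/\lambda^{(k-1)}$ to be a horizontal strip, so the chain defines a semistandard skew tableau $T$ of shape $\nu/\lambda$ and content $\mu$ by recording the index $k$ in each box added at stage $k$.

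The hard part will be verifying that the remaining rhombus inequalities --- those that straddle two adjacent rows and mix $e$-type with $f$-type edges --- are equivalent to the Yamanouchi (reverse-lattice) condition on $T$, i.e.\ that every suffix of the reverse row-reading word contains at least as many $i$'s as $(i{+}1)$'s. I would approach this by induction on $n$: peel off the top row of the hive, reducing to a smaller hive whose associated tableau is $T$ minus its bottom row, and show that the single interface row of cross-rhombus inequalities is exactly the collection of inequalities needed to promote the inductive hypothesis to the full Yamanouchi condition for $T$. Inverting the construction is then straightforward, since a semistandard Yamanouchi tableau reconstructs the chain of partitions, hence the vertex labeling, and the rhombus inequalities follow by the same local analysis applied in reverse. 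This gives the bijection and hence the stated count.
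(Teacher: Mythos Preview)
The paper does not prove this theorem at all: it is quoted verbatim from Knutson and Tao \cite{KT99} and used as a black box in Section~\ref{polytope}. There is therefore nothing in the paper to compare your argument against.

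That said, your sketch is essentially the standard bijective proof, close in spirit to Buch's exposition \cite{Buc00}, which the paper itself points to for the connection between rhombus inequalities and the Littlewood--Richardson rule. The passage from edge labels to vertex labels, the extraction of the chain $\lambda=\lambda^{(0)}\subseteq\cdots\subseteq\lambda^{(n)}=\nu$, and the identification of one family of rhombus inequalities with the horizontal-strip condition are all correct. The step you flag as ``hard'' --- matching the remaining cross-row rhombus inequalities to the Yamanouchi condition --- is indeed where the work lies; your inductive plan of peeling off a row is viable, but you should be aware that the bookkeeping is delicate (one must track exactly which suffix inequalities are governed by which rhombi), and in practice it is often cleaner to exhibit the correspondence with Gelfand--Tsetlin patterns or contratableaux directly rather than by row-peeling induction. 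As written your proposal is a sound outline but not yet a proof; since the paper treats the result as imported, a full proof here would in any case be out of scope.
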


Because we are only interested in integer LR hives it suffices to restrict to when $\lambda,\mu,\nu$ are partitions. Further, if each partition has at most $n$ parts, then the LR hives are LR $n$-hives. Though the border conditions are obvious from the necessary condition that $|\nu| = |\lambda| + |\mu|$ for $c^\nu_{\lambda,\mu}$ to be nonzero, the rhombus inequalities seem mysterious at first. Their inspiration comes from Gelfand-Tsetlin patterns and Cauchy's Interlacing Theorem for the eigenvalues of Hermitian matrices. The first two pairs of inequalities ensure that the tuples are weakly decreasing while the third pair gives a way of associating a contratableau satisfying the Littlewood-Richardson rule; for more on this, see \cite{Buc00}.

Because one LR hive is used to calculate one Littlewood-Richardson coefficient, it stands to reason that ``gluing" multiple LR hives together appropriately should be used to calculate our generalized Littlewood-Richardson coefficients. Before stating and proving this we first make precise how we intend to ``glue" the LR hives. Given one LR hive we combine it with another LR hive by requiring the two to share a side other than the base. This results in the second LR hive being flipped. Of course, we need to verify the values assigned to the shared edges coincide and make precise the edge labelings along with the rhombus inequalities for the flipped hive. For the flipped hive the edges on a descending diagonal are now labeled by the $e$'s while the ascending diagonal edges are labeled by the $f$'s. 

Under this notation, $e^{k+1}_{j0} = f^{k}_{n-1-j \, j}$. Flipping the triangular arrays causes each type of rhombus to be flipped, but by also switching the labels for the $e$'s and $f'$s the same rhombus inequalities in (\ref{rhombus-edge-inequalities}) hold. For these flipped arrays we will want to specify when $n$-tuples $\lambda, \mu, \nu$ such that $|\nu| = |\lambda| + |\mu|$ determine the border and align along shared edges, but we will wait to do this depending on which side of the flipped hive we want to have labeled $\nu$.

 \begin{remark}
 When defining the $(m,n)$-LR sun hive the rhombus inequalities did not include those two types arising from rhombi of adjacent triangles from different hives. Because one hive is flipped, there is no natural way of determining which direction the inequality should be and the direction may differ in different examples. Interestingly, though, the direction of the inequalities arising from adjacent hives is the same within each individual example examined.
\end{remark}

With this notation we may now prove the polytopal description of the generalized Littlewood-Richardson coefficient (\ref{one}).

\begin{theorem}
\label{sun hive}
For partitions $\lambda(1), \ldots, \lambda(2k)$, $k \geq 2$, of no more than $n$ parts, the generalized Littlewood-Richardson coefficient 
\[
\sum c^{\lambda(1)}_{\alpha(1),\alpha(2)} c^{\lambda(2)}_{\alpha(2),\alpha(3)} \cdots c^{\lambda(2k-1)}_{\alpha(2k-1),\alpha(2k)} c^{\lambda(2k)}_{\alpha(2k),\alpha(1)}
\]
is equal to the number of integer $(2k,n)$-LR sun hives with external boundary labels determined by the $\lambda(i)$ in cyclic orientation (so that the edge labeled $\lambda(r)$ is between the edges labeled $\lambda(r+1)$ and $\lambda(r-1)$). For instance,  the boundary labels of a $(6,n)-$LR sun hive are
\[
\begin{tikzpicture}
   \draw (-2,2) -- ++(0:2) -- ++(300:2) -- ++(240:2)  -- ++(180:2) -- ++(120:2) --++(60:2) --++(0:2) -- cycle;
	
\coordinate [label={above:$\lambda(2)$}] () at (-.9,2);
\coordinate [label={right:$\lambda(1)$}] () at (.6,1.3);
\coordinate [label={right: $\lambda(6)$}] () at (.5,-.8);
\coordinate [label={below:$\lambda(5)$}]  () at (-.9,-1.5);
\coordinate [label={left: $\lambda(4)$}] () at (-2.4,-.8);
\coordinate [label={left:$\lambda(3)$}]  () at (-2.4,1.3);
	
\end{tikzpicture}
\]
\end{theorem}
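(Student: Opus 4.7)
The plan is to reduce the theorem to the Knutson--Tao hive model for single Littlewood--Richardson coefficients (Theorem \ref{LR-hives}) by showing that a $(2k,n)$-LR sun hive is precisely the same data as a choice of partitions $\alpha(1),\ldots,\alpha(2k)$ together with a single LR hive for each factor $c^{\lambda(i)}_{\alpha(i),\alpha(i+1)}$, glued along shared interior edges.

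First I would fix partitions $\alpha(1),\ldots,\alpha(2k)$ and count, via Theorem \ref{LR-hives}, integer LR hives realizing each of the $2k$ factors. Concretely, for each $i$ I assign to the $i$-th triangular array the LR hive with boundary labels $(\alpha(i),\alpha(i+1),\lambda(i))$, taking $\alpha(i)$ and $\alpha(i+1)$ on the two interior sides (the ones shared with neighboring triangles) and $\lambda(i)$ on the external side. Reading the border edges of a triangle gives back its three boundary partitions as partial sums, so the edge labels along any shared side of two adjacent triangles match if and only if the associated partition $\alpha(i)$ is the same when read from either side. Thus the collection of $2k$ LR hives with matching interior partitions is in bijection with the set of tuples $(\alpha(1),\ldots,\alpha(2k);H_1,\ldots,H_{2k})$ where each $H_i$ is an LR hive for the $i$-th factor. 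Summing over $\alpha(i)$ and applying Theorem \ref{LR-hives} to each factor produces the claimed generalized Littlewood-Richardson coefficient.

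Next I would verify that this glued object is exactly an integer $(2k,n)$-LR sun hive. The rhombus inequalities \emph{inside} each of the $2k$ triangles are precisely the LR-hive conditions for that triangle, and the boundary conditions (\ref{boundary-edge-conditions}) for each array are the identities $|\alpha(i)|+|\lambda(i)|=|\alpha(i+1)|$ (or its mirror, for the flipped triangles whose orientation reverses the role of ascending and descending diagonals). The flipping issue is handled uniformly: because the triangles alternate between ``$\nu$ on the base'' and ``$\nu$ on an interior side'' as we go around the polygon, the convention of relabeling $e \leftrightarrow f$ in the flipped arrays keeps the rhombus inequalities in (\ref{rhombus-edge-inequalities}) unchanged. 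Note that, as remarked after the definition, the definition of an $(2k,n)$-LR sun hive imposes \emph{no} rhombus inequality across an interior edge between two different triangles, so no extra constraint is introduced by the gluing.

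Finally I would establish the converse: given any integer $(2k,n)$-LR sun hive with external boundary determined by $\lambda(1),\ldots,\lambda(2k)$, reading the labels along the $i$-th interior radius produces a weakly decreasing sequence $\alpha(i)$ of at most $n$ nonnegative integers (by the rhombus inequalities internal to either adjacent triangle), and the restriction of the sun hive to each triangular array is an integer LR hive with boundary $(\alpha(i),\alpha(i+1),\lambda(i))$. The two constructions are mutually inverse, which gives the desired bijection and proves the theorem. The main technical obstacle will be bookkeeping around the flipping of alternate triangles: one must check that the three types of rhombi and the boundary identity (\ref{boundary-edge-conditions}) all transform correctly under the reflection that carries a triangle with its external edge on a given side of the $2k$-gon into the standard orientation used in Theorem \ref{LR-hives}, and that the shared-edge matching between adjacent (oppositely oriented) triangles really does encode equality of the corresponding partition $\alpha(i)$ rather than, say, its reverse.
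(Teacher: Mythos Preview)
Your approach is essentially the paper's: glue $2k$ single Knutson--Tao hives around a common center, invoke Theorem~\ref{LR-hives} on each triangle, and sum over the interior partitions $\alpha(i)$, noting that no rhombus constraints are imposed across the interior radii. Two small slips to correct in the writeup: the boundary identity (\ref{boundary-edge-conditions}) for the $r$-th triangle reads $|\alpha(r)|+|\alpha(r+1)|=|\lambda(r)|$ (since $\lambda(r)$ lies on the base $g$-edges), not $|\alpha(i)|+|\lambda(i)|=|\alpha(i+1)|$; and the alternation is not between ``$\nu$ on the base'' and ``$\nu$ on an interior side''---$\lambda(r)$ is always on the exterior edge, the flip is a reflection of the triangle that swaps the $e$- and $f$-labels so that the shared $\alpha$-edges of adjacent hives line up in the same order.
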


\begin{proof} 
By Theorem \ref{LR-hives} the Littlewood-Richardson coefficient $c^{\lambda(r)}_{\alpha(r),\alpha(r+1)}$ is equal to the number of integer LR $n$-hives with $\lambda(r)$ as the base which satisfy the boundary conditions and rhombus inequalities, where $\alpha(r), \alpha(r+1)$ are some tuples of no more than $n$ parts forming the other two sides of the $r^{th}$ triangular array. Necessarily the tuple $\alpha(r)$ is also a boundary of the $(r-1)^{th}$ triangular array while $\alpha(r+1)$ is a boundary of the $(r+1)^{th}$. We use the previously defined notation for each hive and adjacent (flipped) hive, so we only need to specify the border labels. If the base labeled $\lambda(r)$ has edges labeled $\lambda(r)_1, \ldots, \lambda(r)_n$ from \textit{left to right}, then the adjacent base labeled $\lambda(r+1)$ has edges labeled $\lambda(r+1)_1, \ldots, \lambda(r+1)_n$ from \textit{right to left}. In this way, edges labeled by $\alpha(r)$ and $\alpha(r+1)$ in the $r^{th}$ LR hive are $\alpha(r)_1, \ldots, \alpha(r)_n, \alpha(r+1)_1, \ldots, \alpha(r+1)_n$ clockwise while the edges in the adjacent hive are labeled $\alpha(r+2)_1,\ldots, \alpha(r+2)_n, \alpha(r+1)_1,\ldots, \alpha(r+1)_n$ counterclockwise. The multiplicity
\[
\sum c^{\lambda(1)}_{\alpha(1),\alpha(2)} c^{\lambda(2)}_{\alpha(2),\alpha(3)} \cdots c^{\lambda(2k-1)}_{\alpha(2k-1),\alpha(2k)} c^{\lambda(2k)}_{\alpha(2k),\alpha(1)}
\]
is then equal to the number of integer $(2k,n)$-LR sun hives with these choices of $\alpha(1),\ldots, \alpha(2k)$. The total number of integer $(2k,n)$-LR sun hives with the boundaries $\lambda(1),\ldots, \lambda(2k)$ is then the sum over all possible integer tuples $\alpha(1), \ldots, \alpha(2k)$ with at most $n$ parts. 

\end{proof}

\begin{remark}
There is a characterization of LR hives with vertex labels rather than edge labels. Though the two labelings may be used interchangeably for all results concerning a single Littlewood-Richardson coefficient, the vertex labeling fails in the case of the generalized coefficients because the vertices along a shared boundary would not necessarily agree. For instance, the vertex at the center of the regular $n$-gon could only be zero while this would force the external boundary labels to not be the $\lambda(i)$.
\end{remark}

 \begin{remark}
 In the previous theorem, it is necessary that the number of partitions be at least four and even. We saw that adjacent LR hives must ``flip" in order to line up the boundary edges and in our description the edges on the side determined by $\lambda(i)$ are labeled by $\lambda(i)_1, \ldots, \lambda(i)_n$ from left to right for odd $i$ and in reverse order for even $i$. If the number of partitions, $m$, were odd, then the first and $m+1$ hives are the same, yet these have different parities, so we get two different labelings. The number of hives must then be even and it is easily checked that $m=2$ fails.
 \end{remark}

 As we've seen, the rhombus inequalities (\ref{rhombus-edge-inequalities}) and boundary conditions (\ref{boundary-edge-conditions}) determine a polytope whose number of lattice points corresponds to the multiplicity (\ref{one}) when the external boundaries are determined by the $\lambda(r)$. For each $1 \leq r \leq 2k$, these inequalities may be solved into a linear program $A_r \mathbf{x_r} \leq \mathbf{b_r}$, where $A_r$ is a matrix with entries $0,1,-1$, $\mathbf{x_r}$ is the vector of interior edges $e^r_{ij},\, f^r_{ij}, \, g^r_{ij} \; 0 \leq i \leq n-1, \, 0 \leq j \leq n-1$,
  and  the entries of $\mathbf{b_r}$ are homogeneous, linear forms in the entries of $\lambda(r)$ (which are integral when $\lambda(r)$ is a partition). Because this can be done for each $r$, we can express all of these as a single linear program $A \mathbf{x} \leq \mathbf{b}$, where $A$ is the block sum of the matrices $A_r$ and similarly for $\mathbf{x}$ and $\mathbf{b}$. Again, $\mathbf{b}$ will be homogeneous, which is necessary for the proof of the complexity of the positivity of the generalized Littlewood-Richardson coefficient. In this way, the multiplicity is equal to the number of integer-valued vector solutions $\mathbf{x}$ to this inequality. This proves the following.

\begin{lemma}
\label{linear-program}
For partitions $\lambda(1), \ldots, \lambda(2k)$, $k \geq 2$, there exists a linear program $A\mathbf{x} \leq \mathbf{b}$, where the matrix $A$ has entries $0,1,-1,$ $\mathbf{b}$ is a vector of homogeneous, integral, linear forms in terms of the parts of $\lambda(1), \ldots, \lambda(2k)$, and the multiplicity 
\[
\sum c^{\lambda(1)}_{\alpha(1),\alpha(2)} c^{\lambda(2)}_{\alpha(2),\alpha(3)} \cdots c^{\lambda(2k-1)}_{\alpha(2k-1),\alpha(2k)} c^{\lambda(2k)}_{\alpha(2k),\alpha(1)}
\]
is equal to the number of solution vectors $\mathbf{x}$ with integer entries. 
\end{lemma}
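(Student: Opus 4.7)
The plan is to translate the hive description of Theorem \ref{sun hive} directly into a system of linear constraints with coefficients in $\{-1,0,1\}$, and then to repackage every equality as a pair of opposing inequalities in order to obtain the matrix form $A\mathbf{x}\leq \mathbf{b}$.

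First, I would take as variables the interior edge labels $e^r_{ij}, f^r_{ij}, g^r_{ij}$ of each of the $2k$ triangular arrays, identifying any two variables that correspond to the same shared interior edge between two adjacent hives; this keeps the total number of variables finite without introducing extra equalities. Each rhombus inequality in (\ref{rhombus-edge-inequalities}) is of the form $x_i - x_j \geq 0$, which contributes a row of $A$ with exactly one $+1$, one $-1$, and all other entries $0$, and a corresponding $0$ in $\mathbf{b}$.

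Next, the two small-triangle equalities $e_{ij}+f_{ij}=g_{ij}$ and $e_{i\,j+1}+f_{ij}=g_{i+1\,j}$ inside each array, together with the border condition (\ref{boundary-edge-conditions}) of each of the $2k$ LR hives, are homogeneous linear equalities with coefficients in $\{-1,0,1\}$. I would encode each such equality $L\mathbf{x}=c$ as the pair $L\mathbf{x}\leq c$ and $-L\mathbf{x}\leq -c$, which preserves the coefficient condition on $A$ and the homogeneity and integrality of the right-hand side. The external boundary labels are substituted by the parts $\lambda(r)_j$ and moved to the right-hand side, so that each entry of $\mathbf{b}$ becomes a signed sum of finitely many parts $\lambda(r)_j$, hence a homogeneous integral linear form in the parts of $\lambda(1),\ldots,\lambda(2k)$.

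Finally, by Theorem \ref{sun hive} the integer-valued solution vectors $\mathbf{x}$ to the resulting system $A\mathbf{x}\leq\mathbf{b}$ are in bijection with integer $(2k,n)$-LR sun hives with external boundary determined by the $\lambda(r)$, and the number of the latter is the multiplicity. I expect the main obstacle to be purely bookkeeping: one must make sure that the orientation (left-to-right versus right-to-left) of the boundary edges of each hive is compatible between adjacent hives so that identifying shared internal edges produces a single consistent variable for each such edge. This is handled by the alternating flipping convention already introduced in the proof of Theorem \ref{sun hive}, and it affects neither the coefficient pattern of $A$ nor the form of $\mathbf{b}$.
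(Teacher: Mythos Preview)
Your proposal is correct and follows essentially the same approach as the paper: both arguments read off the linear program directly from the rhombus inequalities, the triangle equalities, and the border conditions of the $(2k,n)$-LR sun hive of Theorem~\ref{sun hive}, with the external boundary labels $\lambda(r)_j$ moved to the right-hand side. Your write-up is in fact slightly more explicit than the paper's in two places---you spell out the standard trick of encoding each equality as a pair of opposing inequalities, and you handle the shared interior edges between adjacent hives by identifying variables rather than by the paper's looser ``block sum'' phrasing---but neither change alters the substance of the argument.
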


With this, we can prove the complexity of the positivity of this multiplicity.

\begin{proof}[Proof of Theorem \ref{sun-complexity}]
First, we claim that the $m$-sun hive polytope, whose number of lattice points equals 
\[
f(\lambda(1),\ldots, \lambda(2k)) = \sum c^{\lambda(1)}_{\alpha(1),\alpha(2)} c^{\lambda(2)}_{\alpha(2),\alpha(3)} \cdots c^{\lambda(2k-1)}_{\alpha(2k-1),\alpha(2k)} c^{\lambda(2k)}_{\alpha(2k),\alpha(1)} 
\]
 contains an (integer) $(2k,n)-$LR sun hive if and only if it is nonempty, which is equivalent to the multiplicity being nonzero. Note that because the polytope is defined by a linear system $A \mathbf{x} \leq \mathbf{b}$ where $\mathbf{b}$ is homogeneous (Lemma \ref{linear-program}), for any integer $N$,  $f(N\lambda(1), \ldots, N \lambda(2k))$ is the number of integer  $(2k,n)$-LR sun hives in the polytope with scaled external boundaries.
 
One direction  of the claim is trivial, so suppose the polytope is nonempty. In particular, the polytope has a vertex. One characterization of a vertex of a polytope (see, for instance, \cite{Sch03}) %theorem 2.2
 defined by such a system of inequalities $A \mathbf{x} \leq \mathbf{b}$  is a point $\mathbf{v}$ of the polytope such that $A\mathbf{v} = \mathbf{b}$.  Because $A$ is of full rank (because the defined polytope is nonempty) and the entries of $A$ and $\mathbf{b}$ are all integers, Cramer's rule implies that all the vertices of the polytope have rational coefficients. There is then an integer $N$ for which the scaled polytope contains a $(2k,n)$-LR sun hive. The saturation theorem \ref{saturation-sun}  ensures that $f(\lambda(1), \ldots, \lambda(m))$ is positive, so the original polytope contains a $(2k,n)$-LR sun hive. 

Determining whether the polytope is nonempty or not can be determined in polynomial time using linear programming, such as the ellipsoid or interior point algorithm. Furthermore, because the linear program $A \mathbf{x} \leq \mathbf{b}$ is combinatorial, positivity can be determined in strongly polynomial time by using the algorithm in \cite{Tar86}.
\end{proof}

\section{Other generalized Littlewood-Richardson coefficients }
\label{section-others}
\hypertarget{section-others}{}
 
 The same techniques used in this paper can be used to prove similar results for two other generalized Littlewood-Richardson coefficients, and as such we state the results for these multiplicities without proof. The details for the calculations and proofs may be found in the author's  PhD thesis. 
 
 \subsection{Context and motivation}
 The two other multiplicities are 
 \begin{equation}\label{two}
f_1(\lambda(1),\ldots, \lambda(m)) :=
\sum c^{\alpha(1)}_{\lambda(1),\lambda(2)}   c^{\lambda(3)}_{\alpha(1),\alpha(2)}\cdots c^{\lambda(m-2)}_{\alpha(m-4),\alpha(m-3)}  c^{\alpha(m-3)}_{\lambda(m-1), \lambda(m)}
\end{equation}
for $m\geq 4$, and 
\begin{equation}\label{three}
f_2(\lambda(1),\ldots, \lambda(m)):=
\sum c^{\lambda(2)}_{\lambda(1), \alpha(1)}   c^{\lambda(3)}_{\alpha(1), \alpha(2)} \cdots c^{\lambda(m-2)}_{\alpha(m-4), \alpha(m-3)}   c^{\lambda(m-1)}_{\alpha(m-3), \lambda(m)}, 
\end{equation}
for m $\geq 3$,
 where the case $m=3$ is the Littlewood-Richardson coefficient $c^{\lambda(2)}_{\lambda(1),\lambda(3)}$, and  each summation ranges over all partitions $\alpha(i)$. The first multiplicity describes the branching rule for the direct sum embedding $\gl(n) \times \gl(n') \subseteq \gl(n+n')$ when $m=6$. This was first proven in \cite{Kin70}, and is also proven in \cite{HTW05} (see also \cite{HJ09} and \cite{Koi89}). The second multiplicity  describes the tensor product multiplicities for extremal weight crystals of type $A_{+\infty}$, using a combinatorial rule found by Kashiwara \cite{Kas90} similar to the Littlewood-Richardson rule that described the irreducible components of the tensor product of irreducible representations of the quantized universal enveloping algebra of a symmetrizable Kac-Moody algebra $\mathfrak{g}$ as described in \cite{Kwo09} (see also \cite{Kwo10}) again when $m=6$. This generalized multiplicity is described in \cite{Chi08} and \cite{Chi09}, and is  found to have connections with long exact sequences of finite, abelian $p$-groups, parabolic affine Kazhdan-Lusztig polynomials, and decomposition numbers for $q$-Schur algebras.
 
 \subsection{Statement of results}
 
 The quiver representing multiplicity (\ref{two}) is the generalized star quiver used to describe multiplicity (\ref{three}) in \cite{Chi08}, except that the first two flags are oriented in the same direction and likewise the last two are going the same direction. The dimension vector $\beta$ is defined like before as $\beta(j,i) = j$ and the weight is defined similar to $\sigma_1$ in equation (\ref{weight-sigma_1}). Similar calculations to the ones in Section \ref{section-saturation} prove the following.
 
 \begin{theorem}[Saturation property]
 Let $\lambda(1),\ldots, \lambda(m)$, $m \geq 4$, be weakly decreasing sequences of $n$ integers. For every integer $r \geq 1$,
 \[
 f_1(r\lambda(1),\ldots, r\lambda(m)) \neq 0 \Longleftrightarrow f_1(\lambda(1),\ldots, \lambda(m)) \neq 0.
 \]
 \end{theorem}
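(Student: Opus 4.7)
The plan is to mimic the strategy used for $f$: realize $f_1(\lambda(1),\ldots,\lambda(m))$ as the dimension of a weight space of semi-invariants for a suitable acyclic quiver and dimension vector, then invoke the Derksen--Weyman saturation theorem (Theorem \ref{saturation}) directly. Concretely, I would build the quiver $Q_1$ by taking the generalized star quiver of Chindris (the one attached to multiplicity (\ref{three})) and modifying the orientations at the two ends: the first two flags $\F(1),\F(2)$ both point \emph{into} a common central vertex $c_1$, then central arrows alternate direction along a path $c_1 \to c_2 \leftarrow c_3 \to \cdots$ on the $m-3$ interior central vertices corresponding to the interior flags $\F(3),\ldots,\F(m-2)$, and the last two flags $\F(m-1),\F(m)$ both point \emph{out of} the terminal central vertex $c_{m-3}$. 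Each flag is an equioriented $A_n$ quiver, and I would set $\beta(j,i)=j$ as in Section \ref{section-saturation}.

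Next I would define a weight $\sigma$ by exactly the same recipe as equation (\ref{weight-sigma_1}): along $\F(i)$ put $\sigma(j,i) = \pm(\lambda(i)_j - \lambda(i)_{j+1})$ and $\sigma(n,i)= \pm \lambda(i)_n$, with sign chosen so that the nonnegativity conditions of Lemma \ref{sun-2} hold (negative for flags going out of a central vertex, positive for flags going in), and adjust the value at each central vertex so that $\sigma$ is in the image of $\langle\alpha,\cdot\rangle$ for a suitable $\alpha$. Then I would run the Cauchy-rule decomposition of $\CC[\Rep]$ exactly as in the proof of Lemma \ref{sun-2}: each flag contributes a single Schur functor $S^{\lambda(i)}V(c)^{\pm}$ at its central vertex, and each central vertex contributes a Littlewood--Richardson coefficient by taking $\gl(V(c))$-invariants. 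The orientation choices at the endpoints are designed precisely so that $c_1$ contributes $c^{\alpha(1)}_{\lambda(1),\lambda(2)}$, $c_{m-3}$ contributes $c^{\alpha(m-3)}_{\lambda(m-1),\lambda(m)}$, and each interior vertex contributes one of the middle factors $c^{\lambda(i)}_{\alpha(i-2),\alpha(i-1)}$, yielding
\[
\dim \semi(Q_1,\beta)_{\sigma} \;=\; f_1(\lambda(1),\ldots,\lambda(m)),
\]
and the stretched version $\dim \semi(Q_1,\beta)_{r\sigma} = f_1(r\lambda(1),\ldots,r\lambda(m))$.

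Since $Q_1$ is by construction acyclic (the central path has no wrap-around, unlike the sun quiver), Theorem \ref{saturation} applies and gives
\[
\semi(Q_1,\beta)_{\sigma}\neq 0 \;\Longleftrightarrow\; \semi(Q_1,\beta)_{r\sigma}\neq 0
\]
for all $r\geq 1$, which is exactly the claimed saturation of $f_1$.

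The main obstacle is a bookkeeping one rather than a conceptual one: verifying that the orientation pattern at the two endpoints produces the \emph{correct} Littlewood--Richardson factors at $c_1$ and $c_{m-3}$. Unlike the interior, where the symmetric pattern of two central arrows at each $c_i$ naturally produces a factor of the form $c^{\lambda(i)}_{\alpha(\cdot),\alpha(\cdot)}$, the endpoints have only one central arrow and two flags attached, so one must match the direction of that lone central arrow against the shared direction of the two flags so that taking $\gl$-invariants yields $c^{\alpha(1)}_{\lambda(1),\lambda(2)}$ (respectively $c^{\alpha(m-3)}_{\lambda(m-1),\lambda(m)}$), up to a twist by an appropriate power of the determinant coming from $\sigma$ at the endpoint. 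Once this local computation is checked at both ends (and the signs in the definition of $\sigma$ at $c_1$ and $c_{m-3}$ are set accordingly), the argument proceeds exactly as in Lemmas \ref{sun-2} and \ref{si-saturation}, and the saturation statement follows immediately from Theorem \ref{saturation} together with the identification $f_1(r\lambda(1),\ldots,r\lambda(m)) = \dim \semi(Q_1,\beta)_{r\sigma}$.
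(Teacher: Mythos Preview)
Your approach is essentially identical to the paper's: the paper states that the quiver for $f_1$ is the generalized star quiver of \cite{Chi08} with the first two and last two flags oriented in the same direction, with $\beta(j,i)=j$ and a weight defined analogously to $\sigma_1$, and then says ``similar calculations to the ones in Section \ref{section-saturation}'' yield the identification $f_1(r\lambda(1),\ldots,r\lambda(m))=\dim\semi(Q_1,\beta)_{r\sigma}$, after which Theorem \ref{saturation} finishes the proof. One small bookkeeping slip: there are $m-2$ central vertices (two endpoint vertices carrying two flags each, plus $m-4$ interior vertices carrying one flag each) connected by $m-3$ central arrows, so your terminal vertex should be $c_{m-2}$ rather than $c_{m-3}$; otherwise your outline matches the paper exactly, including your correct identification of the endpoint orientation check as the only nontrivial verification.
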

 
 The saturation for multiplicity (\ref{three}) is Theorem 1.4 of \cite{Chi08}. In the same paper, Chindris provides the Horn-type inequalities and a generalization of Horn's conjecture for the second multiplicity in Theorem 1.6.
 
 The corresponding inequalities and a generalization of Horn's conjecture for the first multiplicity above are slightly complicated by the fact that they depend on the parity of $m$. We state below only the results for $m$ even and remark that similar results hold true for $m$ odd. 
 
 \textbf{Generalized eigenvalue problem for $f_1$.}
For which weakly decreasing sequences $\lambda(1),\ldots, \lambda(2k),\, k\geq 2$, of $n$ real numbers do there exist $n \times n$ complex Hermitian matrices $H(1),\ldots, H(2k)$ with eigenvalues $\lambda(1),\ldots, \lambda(2k)$ such that 
\[
\dsp H(1) + \sum_{i=1}^{k-1} H(2i) = H(2k) + \sum_{i=1}^{k-1} H(2i+1)  
\]
 and such that 
\[
\begin{array}{rcl}
\dsp H(1) + H(2), & \dsp (-1)^j(H(1)+H(2)) + \sum_{i=3}^j (-1)^{j+i} H(i) & 3 \leq j \leq 2k-2
\end{array}
\]
have non-negative eigenvalues?

We define the generalized Klyachko's cone for this multiplicity as the rational convex polyhedral cone of $m$-tuples $(\lambda(1),\ldots, \lambda(m))$ of solutions to this generalized eigenvalue problem. We denote this cone as $K_1(n,m) \subseteq \RR^{nm}$.

For subsets $I_i \subseteq \{1,\ldots, n\}, \, 1 \leq i \leq 2k$, define the following tuple of weakly decreasing sequences of integers:
\[
\unlam_1(I_i) = \begin{cases}
\lambda'(I_i) & 1 \leq i \leq 2k-3, \; i \text{ odd} \\
\lambda'(I_2) - ((|I_2| - |I_3|)^{n-|I_2|}) & i=2\\
\lambda'(I_i) - ((|I_i| - |I_{i-1}| - |I_{i+1}|)^{n-|I_i|}) & 4 \leq i \leq 2k-2, \; i \text{ even} \\
\lambda'(I_{2k-1}) - ((|I_{2k-1}| -  |I_{2k-1} \backslash \{n\} - |I_{2k} \backslash \{n\}|)^{n-|I_{2k-1}|}) & i=2k-1 \\
\lambda'(I_{2k} \backslash \{n\}) &  i=2k.
\end{cases}
\]
 
 The generalization of Horn's conjecture for multiplicity (\ref{two}) may then be stated as follows (for $m$ even).
 
 \begin{theorem}
Let $\lambda(1),\ldots, \lambda(m),\, m=2k \geq 4$, be weakly decreasing sequences of $n$ real numbers. Then the following conditions are equivalent:
		\begin{enumerate}
		\item $(\lambda(1),\ldots, \lambda(m)) \in K_1(n,m)$;
		\item the numbers $\lambda(i)_j$ satisfy
		\[
		\sum_{i=1}^{k-1} |\lambda(2i+1)| + |\lambda(2k)|  =
		|\lambda(1)| + \sum_{i=1}^{k-1} |\lambda(2i)| 
		\]
		and
		\[
		\sum_{j \in I_i} \sum_{i=1}^{k-1} \lambda(2i+1)_j+ \sum_{j \in I_{2k}} \lambda(2k)_j \leq
		\sum_{j \in I_1} \lambda(1)_j + \sum_{j \in I_i} \sum_{i=1}^{k-1} \lambda(2i)_j
		\]
		for every tuple $(I_1, \ldots, I_{m})$ for which $|I_i|<n$ for some $i$, $|I_1| = |I_2|$ and $|I_{m-1}| = |I_m|$, $\unlam_1(I_i)$ are  partitions, $1 \leq i \leq m$,  and 
		\[
		f_1(\unlam_1(I_1),\ldots, \unlam_1(I_m)) \neq 0.
		\]
		If the $\lambda(i)$ are sequences of integers, conditions $(1)-(2)$ are equivalent to 
		\item $f_1(\lambda(1),\ldots, \lambda(m)) \neq 0$.
		\end{enumerate}
\end{theorem}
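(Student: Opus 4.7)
The plan is to mirror the program carried out in Sections \ref{section-saturation}--\ref{Hermitian} for the sun quiver, now applied to the quiver $Q'$ associated to multiplicity (\ref{two}). First I would make precise the quiver and data: start from the generalized star quiver used in \cite{Chi08} for multiplicity (\ref{three}), but reorient so that $\mathcal F(1)$ and $\mathcal F(2)$ point the same direction (into a shared central vertex) and likewise $\mathcal F(m-1)$ and $\mathcal F(m)$ share a direction (out of a shared central vertex). Take the dimension vector $\beta(j,i)=j$ and the weight $\sigma_2$ defined exactly as in (\ref{weight-sigma_1}), adjusted for this orientation. By a direct tensor-product/Cauchy decomposition of $\C[\Rep]$, identical in spirit to the proof of Lemma \ref{sun-2}, I would establish
\[
f_1(\lambda(1),\dots,\lambda(m)) \;=\; \dm \semi(Q',\beta)_{\sigma_2},
\]
which immediately yields the saturation statement from Theorem \ref{saturation} and hence the equivalence of (2)--(3) in the integral case.

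Next, to obtain the Horn-type inequalities in (2), I would verify that $\beta$ is a Schur root (using Kac's fundamental-region criterion \cite{Kac82} as in Lemma \ref{Schur-root}), whence $\dm C(Q',\beta)=mn-1$. Then, following Lemma \ref{facets-2}, I would show that the nontrivial facets of $C(Q',\beta)$ correspond to decompositions $\beta=\beta_1+\beta_2$ with $\beta_1,\beta_2$ Schur, $\beta_1\circ\beta_2=1$, where $\beta_1$ is weakly increasing with jumps of at most one along each flag toward its central vertex. Encoding such a $\beta_1$ by jump sets $I=(I_1,\dots,I_m)$ exactly as before, I would compute the weight $\langle\beta_1,\cdot\rangle$ at every vertex; because two flags meet at each of the outer central vertices (instead of one, as in the sun quiver), the central contributions pick up extra cancellations, and these are precisely what produce the constraints $|I_1|=|I_2|$, $|I_{m-1}|=|I_m|$ and the special forms of $\unlam_1(I_2)$, $\unlam_1(I_{2k-1})$, $\unlam_1(I_{2k})$ in the theorem. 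The reciprocity property (Lemma \ref{reciprocity}) then translates $\beta_1\circ\beta_2=1$ into $f_1(\unlam_1(I_1),\dots,\unlam_1(I_m))=1$, just as in Lemma \ref{sun-Horn}. Together with the chamber inequalities, this gives the equivalence $\sigma_2\in C(Q',\beta) \Longleftrightarrow$ the inequalities in (2), completing (1$'$)$\Leftrightarrow$(2) where (1$'$) denotes $\sigma_2\in C(Q',\beta)$.

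For (1)$\Leftrightarrow$(1$'$), I would apply Proposition \ref{matrix-equations}: $\sigma_2\in C(Q',\beta)$ iff there exist matrices $W(a)$ satisfying the star equations at every vertex. Along each of the $m$ flags, the iterated Hermitian lemma (Lemma \ref{Hermitians}, together with its dual) upgrades the $W$'s to an $n\times n$ Hermitian matrix $H(i)$ with spectrum $\lambda(i)$. The equations at the three central vertices then rewrite, after absorbing scalar shifts and replacing $WW^*$ by a positive semidefinite $B$, as
\[
H(1)+H(2)=B_1,\quad B_1+\sum_{i=3}^{j}(-1)^{j+i+1}H(i)=B_{j-1}\;(3\le j\le 2k-2),\quad H(2k-1)+H(2k)=B_{2k-2}+\text{(alternating sum)},
\]
with each $B_\ell$ positive semidefinite. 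Reading off the resulting sum identity and the positivity of the listed alternating sums gives exactly the generalized eigenvalue problem for $f_1$; conversely, any Hermitian realization yields a representation satisfying Proposition \ref{matrix-equations}. This establishes a cone isomorphism $K_1(n,m)\cong C(Q',\beta)$, taking $(\lambda(1),\dots,\lambda(m))\mapsto\sigma_2$, finishing (1)$\Leftrightarrow$(2). Finally, (3)$\Leftrightarrow$(2) for integral input follows from saturation applied to $\dm\semi(Q',\beta)_{\sigma_2}$.

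I expect the main obstacle to be the bookkeeping at the two \emph{boundary} central vertices, where two like-oriented flags meet. The weight and matrix equations there are qualitatively different from all other central vertices, and this is the sole source of the irregular boundary corrections in the definition of $\unlam_1$ and of the cardinality constraints $|I_1|=|I_2|$ and $|I_{m-1}|=|I_m|$. Getting these exactly right — both on the quiver side (so that the Schur/jump analysis of Lemma \ref{facets-2} still applies at these vertices) and on the Hermitian side (so that the correct positive semidefinite matrices appear in the alternating-sum identities) — is where the delicate sign and parity tracking lives, and is what distinguishes the even-$m$ statement from its odd-$m$ analogue.
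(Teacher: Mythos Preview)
Your proposal is correct and follows precisely the approach the paper itself indicates: the paper states this theorem without proof, remarking only that ``the same techniques used in this paper can be used to prove similar results'' and that ``the quiver representing multiplicity (\ref{two}) is the generalized star quiver used to describe multiplicity (\ref{three}) in \cite{Chi08}, except that the first two flags are oriented in the same direction and likewise the last two are going the same direction.'' Your outline---Cauchy decomposition as in Lemma~\ref{sun-2}, Schur root via Kac's criterion as in Lemma~\ref{Schur-root}, facet analysis as in Lemma~\ref{facets-2}, jump-set translation as in Lemma~\ref{sun-Horn}, and the moment-map argument via Proposition~\ref{matrix-equations} and Lemma~\ref{Hermitians}---is exactly this program, and your identification of the boundary central vertices (where two like-oriented flags meet) as the locus of all the irregular correction terms in $\unlam_1$ and the cardinality constraints $|I_1|=|I_2|$, $|I_{m-1}|=|I_m|$ is on target.
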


 Note that in this case we get an equivalence of the tuples that satisfy the generalized eigenvalue problem and those that satisfy the Horn-type inequalities, as opposed to the case for the sun quiver. This again provides a recursive method for finding all nonzero generalized Littlewood-Richardson coefficients of this type. Like before, the list may be shortened.
 
 \begin{proposition}
 The following statements are true for $m=2k \geq 4$:
 \begin{enumerate}
 \item We have $\dm K_1(n,m) = mn-1.$
 \item The cone $K_1(n,m)$ consists of all tuples $(\lambda(1),\ldots, \lambda(m))$ of weakly decreasing sequences of $n$ reals such that 
 \[
		\sum_{i=1}^{k-1} |\lambda(2i+1)| + |\lambda(2k)|  =
		|\lambda(1)| + \sum_{i=1}^{k-1} |\lambda(2i)| 
		\]
		and
		\[
		\sum_{j \in I_i} \sum_{i=1}^{k-1} \lambda(2i+1)_j+ \sum_{j \in I_{2k}} \lambda(2k)_j \leq
		\sum_{j \in I_1} \lambda(1)_j + \sum_{j \in I_i} \sum_{i=1}^{k-1} \lambda(2i)_j
		\]
		for every tuple $(I_1, \ldots, I_{m})$ for which $|I_i|<n$ for some $i$, $|I_1| = |I_2|$ and $|I_{m-1}| = |I_m|$, $\unlam_1(I_i)$ are  partitions, $1 \leq i \leq m$,  and 
		\[
		f_1(\unlam_1(I_1),\ldots, \unlam_1(I_m)) = 1.
		\]
\end{enumerate}
\end{proposition}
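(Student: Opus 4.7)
The plan is to mirror the strategy used in this paper for the sun-quiver multiplicity $f$. As indicated in the opening of this section, multiplicity $f_1$ is represented as $\dim\ssi(Q,\beta)_{\sigma}$ for a suitable acyclic quiver $Q$ built from a $2k$-flag central configuration (the generalized star quiver with the first two and last two flags each oriented in the same direction), dimension vector $\beta(j,i)=j$, and a weight $\sigma$ defined analogously to $\sigma_1$ in (\ref{weight-sigma_1}). Under this identification, the map
\[
  (\lambda(1),\ldots,\lambda(m)) \longmapsto \sigma
\]
sends the hyperplane $\sum_{i=1}^{k-1}|\lambda(2i+1)|+|\lambda(2k)|=|\lambda(1)|+\sum_{i=1}^{k-1}|\lambda(2i)|$ to $\HH(\beta)$, and the chamber inequalities for $\sigma$ translate into the weakly-decreasing conditions on the $\lambda(i)$. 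The generalized eigenvalue problem for $f_1$ (through its moment map description via Proposition~\ref{matrix-equations} and Lemma~\ref{Hermitians}) produces an isomorphism of $K_1(n,m)$ with $\cone$, exactly as in the proof of Theorem~\ref{generalization-Horn}.

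For part (1), I would verify that $\beta$ is a Schur root by Kac's criterion, just as in Lemma~\ref{Schur-root}: since $\beta$ is indivisible, it suffices to check that the support is connected and $\tau_i(\beta)\leq 0$ for every vertex $i$, which is a routine calculation using $\beta(j,i)=j$ and the orientations at each vertex. Once $\beta$ is Schur, Theorem~\ref{facet-decomp}(1) gives $\dm C(Q,\beta)=N-1$, where $N=mn$ is the number of vertices of $Q$, hence $\dm K_1(n,m)=mn-1$.

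For part (2), I would translate the description of the regular facets of $\cone$ (Theorems~\ref{facets} and~\ref{facet-decomp}(2)) into the partition language. Following the pattern of Lemma~\ref{facets-2}, the decompositions $\beta=\beta_1+\beta_2$ with $\beta_1,\beta_2$ Schur and $\beta_1\circ\beta_2=1$ are exactly those with $\beta_1$ weakly increasing with jumps of at most one along each flag (and $c_1=c_2=1$). Each such $\beta_1$ is encoded by a tuple $I=(I_1,\ldots,I_m)$ of subsets of $\{1,\ldots,n\}$ via its jump sets, and computing the weight $\langle\beta_1,\cdot\rangle$ as in Lemma~\ref{sun-Horn} yields the partitions $\unlam_1(I_i)$ written above; the altered orientations of the first and last two flags produce exactly the listed case distinctions (including the size matchings $|I_1|=|I_2|$ and $|I_{m-1}|=|I_m|$, which come from the two pairs of parallel flags having to share an $\alpha$-free endpoint). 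The condition $\beta_1\circ\beta_2=1$ becomes $f_1(\unlam_1(I_1),\ldots,\unlam_1(I_m))=1$ by the reciprocity property (Lemma~\ref{reciprocity}) applied to the quiver-theoretic interpretation of $f_1$. Combining these, the proof of Proposition~\ref{Horn-type-inequalities} goes through verbatim and the minimal-list statement of Remark~\ref{minimal-list} applies.

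The main obstacle, and the step requiring the most care, will be the correct bookkeeping in the partition translation: unlike in the sun-quiver case, the nonuniform orientations at the four boundary vertices of $Q$ force the separate definitions of $\unlam_1(I_i)$ for $i\in\{2,2k-1,2k\}$, and one has to verify that the size conditions $|I_1|=|I_2|$ and $|I_{m-1}|=|I_m|$ are precisely the constraints imposed on $\beta_1$ by being weakly increasing with jumps of at most one across the two ``parallel'' flag pairs. Once this combinatorial translation is done correctly, the rest is a straightforward adaptation of the sun-quiver arguments.
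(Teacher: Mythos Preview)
Your proposal is correct and follows precisely the approach the paper indicates: the paper states this proposition without proof, explicitly noting that ``the same techniques used in this paper can be used to prove similar results'' and referring to the author's thesis for details. Your outline---verifying $\beta$ is Schur via Kac's criterion, applying Theorem~\ref{facet-decomp} for the dimension, and adapting Lemmas~\ref{facets-2} and~\ref{sun-Horn} together with Remark~\ref{minimal-list} for the minimal list of facets---is exactly the intended adaptation, and your identification of the orientation bookkeeping at the boundary flags as the only nontrivial new ingredient is accurate.
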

 
 Factorization formulas can be found for these multiplicities by using these Horn-type inequalities. Likewise, we can define level-1 weights for the corresponding quivers for (\ref{two}) and (\ref{three}), yet these prove to be less interesting than for (\ref{one}): if a level-1 weight is effective in either case, then the Littlewood-Richardson polynomial has value one and stretching it by a factor of $N$ does not change this. This at least shows that Fulton's conjecture remains true for these multiplicities.
 
 A similar method of gluing LR hives together produces a polytopal description of these multiplicities. In fact, the two have the same generalized LR hives (differing in shape depending on the parity of $m$), except the boundary labels for (\ref{two}) use the related partitions $\widetilde{\lambda(i)}$ for $i=1,2,m-1,m$. Because of this polytopal description and the saturation properties, we again can determine the complexity of their positivity.
 
 \begin{theorem}
 Determining whether multiplicities \emph{(\ref{two})} and \emph{(\ref{three})} are positive or not can be decided in polynomial time. Even more, each can be decided in strongly polynomial time in the sense of \cite{Tar86}.
 \end{theorem}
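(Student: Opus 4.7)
The plan is to mimic the proof of Theorem \ref{sun-complexity} with only cosmetic changes. The key ingredients in the sun-quiver case were (i) a polytopal description of the multiplicity as the number of lattice points in a rational polytope cut out by $0,\pm 1$ rhombus inequalities with homogeneous integral right-hand side in the parts of the $\lambda(i)$, and (ii) the saturation property, which together with Cramer's rule on the vertices of the polytope lets us pass from ``the polytope is nonempty'' to ``the polytope contains a lattice point.'' Both of these ingredients are available for $f_1$ and $f_2$: the saturation property for $f_1$ is the theorem stated above, the saturation property for $f_2$ is \cite[Theorem 1.4]{Chi08}, and the corresponding LR-hive models are described (and referenced to the author's thesis) in the paragraphs preceding the theorem.

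First, I would write down the generalized LR-hive polytope for each multiplicity. For $f_2$ this is the hive model of Chindris, obtained by gluing $m-2$ triangular Knutson--Tao LR hives along shared sides in the appropriate ``fan'' shape, with the external boundary edges labeled by $\lambda(1),\dots,\lambda(m)$. For $f_1$ the hives are glued in the same fan shape but the first two and the last two flags have the same orientation, so the boundary labels on those four sides are the shifted partitions $\widetilde{\lambda(i)}$ ($i=1,2,m-1,m$) recorded above; the shape of the hive also depends on the parity of $m$. Exactly as in the proof of Theorem \ref{sun hive}, repeated application of Theorem \ref{LR-hives} to each individual triangular hive, summed over all internal partitions $\alpha(j)$, expresses $f_1(\lambda(1),\dots,\lambda(m))$ and $f_2(\lambda(1),\dots,\lambda(m))$ as the number of integer hives with the prescribed external boundary.

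Next, I would package the rhombus inequalities and boundary equalities defining each hive polytope into a single linear program $A\mathbf{x}\le\mathbf{b}$, exactly as in Lemma \ref{linear-program}: the matrix $A$ has entries in $\{0,1,-1\}$ and is independent of the partitions, while $\mathbf{b}$ is a vector of integral homogeneous linear forms in the parts of the $\lambda(i)$ (and the $\widetilde{\lambda(i)}$ for $f_1$, which are themselves integral homogeneous linear forms in the $\lambda(i)$). The number of integer solutions equals the multiplicity. Now the argument of the proof of Theorem \ref{sun-complexity} goes through verbatim: if the polytope is nonempty it has a vertex, the vertex is rational by Cramer's rule, so for some integer $N\ge 1$ the $N$-dilate contains an integer hive, hence $f_i(N\lambda(1),\dots,N\lambda(m))>0$, and the saturation property for $f_i$ then gives $f_i(\lambda(1),\dots,\lambda(m))>0$. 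Conversely, a lattice point in the polytope is trivially a witness of positivity. Thus positivity is equivalent to nonemptiness of the associated rational polyhedron.

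Finally, nonemptiness of a polyhedron defined by a combinatorial linear program $A\mathbf{x}\le\mathbf{b}$ with $A\in\{0,1,-1\}$ and integral $\mathbf{b}$ can be decided in polynomial time (e.g.\ by the ellipsoid or interior-point method), and in strongly polynomial time by Tardos's algorithm \cite{Tar86}, giving both conclusions of the theorem. The only real obstacle I anticipate is verifying that the hive models announced (and fully developed in the author's thesis) genuinely encode $f_1$ and $f_2$ with a linear program whose right-hand side is homogeneous in the $\lambda(i)$; once that is confirmed, the argument is identical to the sun-quiver case and no new ideas are required.
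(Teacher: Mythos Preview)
Your proposal is correct and follows exactly the approach the paper indicates: the paper states this result without proof, noting only that the polytopal description via glued LR hives and the saturation properties for $f_1$ and $f_2$ allow one to ``again \ldots\ determine the complexity of their positivity,'' with details deferred to the author's thesis. Your outline---build the hive polytope, package it as a combinatorial linear program $A\mathbf{x}\le\mathbf{b}$ with $A\in\{0,\pm1\}$ and homogeneous integral $\mathbf{b}$, invoke saturation plus Cramer's rule to equate positivity with nonemptiness, then apply Tardos's algorithm---is precisely the intended transplant of the proof of Theorem~\ref{sun-complexity}.
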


 \section{Appendix}
\label{appendix}
The following are the dimension vectors $\beta_1$  that correspond to a complete and minimal list of Schur roots $\beta_1$ and $\beta_2$ such that $\beta = \beta_1 + \beta_2$ and $\beta_1 \circ \beta_2 = 1$ in the case $(n,m) =(2,6)$, up to permutations of the flags respecting the symmetries of the sun quiver.

\[
\begin{tikzcd}[column sep=small, row sep=small]
  &1&&& 1 \arrow[dl] \\
&& 1\arrow[ul] & 1 \arrow[l] \arrow[dr] \\
 0\arrow[r] & 0 \arrow[ur] \arrow[dr]&&& 1 \arrow[r]& 1 \\
&& 0 \arrow[dl] & 0 \arrow[l] \arrow[ur] \\
&0&&& \arrow[ul] 0
\end{tikzcd} \hspace{2in}
\begin{tikzcd}[column sep=small, row sep=small]
 &0&&& 0 \arrow[dl] \\
&& 1 \arrow[ul] & 1\arrow[l] \arrow[dr] \\
 0\arrow[r] & 0 \arrow[ur] \arrow[dr]&&& 1  \arrow[r]& 1 \\
&& 0 \arrow[dl] & 0 \arrow[l] \arrow[ur] \\
&0&&& \arrow[ul] 0
\end{tikzcd} 
\]

\[
\begin{tikzcd}[column sep=small, row sep=small]
  &1&&& 1 \arrow[dl] \\
&& 1\arrow[ul] & 1 \arrow[l] \arrow[dr] \\
 0\arrow[r] & 1 \arrow[ur] \arrow[dr]&&& 1 \arrow[r]& 1 \\
&& 1 \arrow[dl] & 0 \arrow[l] \arrow[ur] \\
&1&&& \arrow[ul] 0
\end{tikzcd} \hspace{2in}
\begin{tikzcd}[column sep=small, row sep=small]
 &0&&& 0 \arrow[dl] \\
&& 1 \arrow[ul] & 1\arrow[l] \arrow[dr] \\
 0\arrow[r] & 1 \arrow[ur] \arrow[dr]&&& 1  \arrow[r]& 1 \\
&& 1 \arrow[dl] & 0 \arrow[l] \arrow[ur] \\
&1&&& \arrow[ul] 0
\end{tikzcd} 
\]

\[
\begin{tikzcd}[column sep=small, row sep=small]
  &0&&& 0 \arrow[dl] \\
&& 1\arrow[ul] & 1 \arrow[l] \arrow[dr] \\
 0\arrow[r] & 0 \arrow[ur] \arrow[dr]&&& 1 \arrow[r]& 1 \\
&& 1 \arrow[dl] & 1 \arrow[l] \arrow[ur] \\
&1&&& \arrow[ul] 0
\end{tikzcd} \hspace{2in}
\begin{tikzcd}[column sep=small, row sep=small]
 &1&&& 1 \arrow[dl] \\
&& 2 \arrow[ul] & 2\arrow[l] \arrow[dr] \\
 0\arrow[r] & 0 \arrow[ur] \arrow[dr]&&& 2  \arrow[r]& 1 \\
&& 0 \arrow[dl] & 0 \arrow[l] \arrow[ur] \\
&0&&& \arrow[ul] 0
\end{tikzcd} 
\]

\[
\begin{tikzcd}[column sep=small, row sep=small]
  &1&&& 1 \arrow[dl] \\
&& 2\arrow[ul] & 1 \arrow[l] \arrow[dr] \\
 1\arrow[r] & 1 \arrow[ur] \arrow[dr]&&& 1 \arrow[r]& 1 \\
&& 1 \arrow[dl] & 0 \arrow[l] \arrow[ur] \\
&1&&& \arrow[ul] 0
\end{tikzcd} \hspace{2in}
\begin{tikzcd}[column sep=small, row sep=small]
 &1&&& 1 \arrow[dl] \\
&& 2 \arrow[ul] & 1\arrow[l] \arrow[dr] \\
 0\arrow[r] & 1 \arrow[ur] \arrow[dr]&&& 1  \arrow[r]& 1 \\
&& 1 \arrow[dl] & 0 \arrow[l] \arrow[ur] \\
&0&&& \arrow[ul] 0
\end{tikzcd} 
\]

\[
\begin{tikzcd}[column sep=small, row sep=small]
  &1&&& 0 \arrow[dl] \\
&& 2 \arrow[ul] & 1 \arrow[l] \arrow[dr] \\
 0\arrow[r] & 1 \arrow[ur] \arrow[dr]&&& 1 \arrow[r]& 0 \\
&& 1 \arrow[dl] & 0 \arrow[l] \arrow[ur] \\
&0&&& \arrow[ul] 0
\end{tikzcd} \hspace{2in}
\begin{tikzcd}[column sep=small, row sep=small]
 &1&&& 1 \arrow[dl] \\
&& 2 \arrow[ul] & 1\arrow[l] \arrow[dr] \\
 0\arrow[r] & 1 \arrow[ur] \arrow[dr]&&& 1  \arrow[r]& 1 \\
&& 1 \arrow[dl] & 1 \arrow[l] \arrow[ur] \\
&1&&& \arrow[ul] 0
\end{tikzcd} 
\]

\[
\begin{tikzcd}[column sep=small, row sep=small]
  &1&&& 1 \arrow[dl] \\
&& 1\arrow[ul] & 1 \arrow[l] \arrow[dr] \\
 0\arrow[r] & 1 \arrow[ur] \arrow[dr]&&& 1 \arrow[r]& 1 \\
&& 2  \arrow[dl] & 1 \arrow[l] \arrow[ur] \\
&1&&& \arrow[ul] 0
\end{tikzcd} \hspace{2in}
\begin{tikzcd}[column sep=small, row sep=small]
 &1&&& 0 \arrow[dl] \\
&& 2 \arrow[ul] & 1\arrow[l] \arrow[dr] \\
 0\arrow[r] & 1 \arrow[ur] \arrow[dr]&&& 1  \arrow[r]& 1 \\
&& 1 \arrow[dl] & 1 \arrow[l] \arrow[ur] \\
&0&&& \arrow[ul] 0
\end{tikzcd} 
\]

\[
\begin{tikzcd}[column sep=small, row sep=small]
  &1&&& 1 \arrow[dl] \\
&& 2\arrow[ul] & 2 \arrow[l] \arrow[dr] \\
 1\arrow[r] & 1 \arrow[ur] \arrow[dr]&&& 2 \arrow[r]& 1 \\
&& 1 \arrow[dl] & 1 \arrow[l] \arrow[ur] \\
&1&&& \arrow[ul] 0
\end{tikzcd} \hspace{2in}
\begin{tikzcd}[column sep=small, row sep=small]
 &1&&& 1 \arrow[dl] \\
&& 2 \arrow[ul] & 2\arrow[l] \arrow[dr] \\
 0\arrow[r] & 1 \arrow[ur] \arrow[dr]&&& 2  \arrow[r]& 1 \\
&& 1 \arrow[dl] & 1 \arrow[l] \arrow[ur] \\
&0&&& \arrow[ul] 0
\end{tikzcd} 
\]
\section*{Acknowledgement}

The author would like to thank Calin Chindris for his support during the duration of this project, especially suggesting the problem and the many helpful discussions.

The author is also extremely grateful to Velleda Baldoni, Mich\`{e}le Vergne, and Michael Walter for their interest in this paper and for pointing out mistakes in the calculations in Example \ref{n=2} in an earlier draft. They corroborated the inequalities as they are now stated using the techniques developed in \cite{BVW18}.

%\bibliographystyle{alpha}
%\bibliography{biblio}
\medskip
\smallskip
\address{\textsc{Mathematics Department, Fitchburg State University, 160 Pearl St, MA, 01420}}
\par \nopagebreak
\noindent\email{\textit{Email address}: \texttt{bcolli15@fitchburgstate.edu}}

\end{document}